\documentclass[12pt,a4paper,reqno%
]{amsart}
\usepackage[T1]{fontenc}
\usepackage{lmodern}
\usepackage[margin=27mm]{geometry}
\usepackage{amsmath,amssymb,amsthm,mathtools}
\usepackage{calrsfs}
\usepackage{enumitem,booktabs,array,longtable}
\usepackage{microtype,needspace}
\usepackage{tikz-cd}
\newcounter{proofstep}
\newcounter{proofanchor}
\newcommand{\proofstep}[1]{\stepcounter{proofanchor}\refstepcounter{proofstep}\label{#1}\textup{(\theproofstep)}}
\usepackage[hidelinks]{hyperref}

\hypersetup{pdftitle={Derived equivalence classification of twisted and generalized multifold extensions of affine type},pdfsubject={Derived and stable equivalence classification of twisted and generalized multifold extensions of affine type}}

\numberwithin{equation}{section}

\setlist[enumerate,1]{label=\textup{(\arabic*)},ref=\arabic*,itemsep=3pt,topsep=5pt}
\setlist[enumerate,2]{label=\textup{(\alph*)},ref=\alph*,itemsep=3pt,topsep=5pt}

\theoremstyle{plain}
\newtheorem{thm}{Theorem}[section]
\newtheorem{prp}[thm]{Proposition}
\newtheorem{lem}[thm]{Lemma}
\newtheorem{cor}[thm]{Corollary}
\theoremstyle{definition}
\newtheorem{dfn}[thm]{Definition}
\newtheorem{exm}[thm]{Example}
\newtheorem{rmk}[thm]{Remark}
\newcommand{\thmref}[1]{Theorem~\ref{#1}}
\newcommand{\prpref}[1]{Proposition~\ref{#1}}
\newcommand{\lemref}[1]{Lemma~\ref{#1}}
\newcommand{\corref}[1]{Corollary~\ref{#1}}
\newcommand{\dfnref}[1]{Definition~\ref{#1}}

\newcommand{\rmkref}[1]{Remark~\ref{#1}}

\newcommand\al{\alpha}
\newcommand\be{\beta}
\newcommand\ga{\gamma}

\newcommand\et{\eta}
\renewcommand\th{\theta}
\newcommand\la{\lambda}
\newcommand\ro{\rho}
\newcommand\si{\sigma}
\newcommand\ta{\tau}

\newcommand\ph{\phi}
\newcommand\ps{\psi}
\newcommand\Ga{\Gamma}
\newcommand\De{\Delta}
\newcommand\La{\Lambda}

\newcommand\Om{\Omega}
\newcommand\Ph{\Phi}
\newcommand\Ps{\Psi}

\newcommand\Hom{\operatorname{Hom}}
\newcommand\Aut{\operatorname{Aut}}
\newcommand\End{\operatorname{End}}
\newcommand\Ext{\operatorname{Ext}}
\newcommand\Ker{\operatorname{Ker}}

\renewcommand\mod{\operatorname{mod}}
\newcommand{\umod}{\operatorname{\underline{mod}}\nolimits}
\newcommand{\uHom}{\operatorname{\underline{Hom}}\nolimits}
\newcommand\prj{\operatorname{prj}}
\newcommand\rank{\operatorname{rank}}
\newcommand\Kb{{\mathcal K}^{\text{\rm b}}}
\newcommand\Db{{\mathcal D}^{\text{\rm b}}}

\newcommand\calA{{\mathcal A}}
\newcommand\calB{{\mathcal B}}
\newcommand\calC{{\mathcal C}}
\newcommand\calD{{\mathcal D}}
\newcommand\calE{{\mathcal E}}

\newcommand\calI{{\mathcal I}}

\newcommand\calM{{\mathcal M}}

\newcommand\calP{{\mathcal P}}

\newcommand\calS{{\mathcal S}}
\newcommand\calT{{\mathcal T}}

\newcommand\bbZ{{\mathbb Z}}

\newcommand\bbC{{\mathbb C}}
\renewcommand\k{\Bbbk}
\newcommand\ds{\oplus}
\newcommand\ox{\otimes}
\newcommand\Ds{\bigoplus}
\newcommand\ovl{\overline}

\newcommand\Out{\operatorname{Out}}
\newcommand\Inn{\operatorname{Inn}}
\newcommand\Pic{\operatorname{Pic}}
\newcommand\DPic{\operatorname{DPic}}
\newcommand\typ{\operatorname{typ}_{\mathrm{aff}}}
\newcommand\kk{\k^\times}
\newcommand{\Aaff}[2]{\widetilde A_{#1,#2}}
\DeclareMathOperator{\PGL}{PGL}
\DeclareMathOperator{\GL}{GL}
\DeclareMathOperator{\Aff}{Aff}

\newcommand\blank{\text{-}}
\newcommand\dereq{\overset{\text{der}}{\sim}}

\newcommand\Mod{\operatorname{Mod}}
\newcommand\Cdot{\raisebox{1pt}{\scalebox{0.4}{$\bullet$}}}
\newcommand\down{_{\Cdot}}
\newcommand\up{^{\Cdot}}
\newcommand{\To}{\Rightarrow}
\newcommand\id{1\kern-.25em{\text{{\rm l}}}}

\title[Multifold extensions of affine type]{Derived equivalence classification of twisted and generalized
multifold extensions of affine type}
\author{Hideto Asashiba}
\address{Department of Mathematics, Faculty of Science,
Shizuoka University, 836 Ohya, Suruga-ku, Shizuoka, 422-8529, Japan}
\address{Institute for Advanced Study, KUIAS, Kyoto University,
Yoshida Ushinomiya-cho, Sakyo-ku, Kyoto 606-8501, Japan}
\address{Osaka Central Advanced Mathematical Institute,
3-3-138 Sugimoto, Sumiyoshi-ku, Osaka, 558-8585, Japan}
\email{asashiba.hideto@shizuoka.ac.jp}
\date{}
\subjclass[2020]{Primary 16E35; Secondary 16D50, 16G20, 16G70}
\keywords{Derived equivalence; repetitive algebra;
twisted multifold extension; affine type; mesh category; derived Picard group}

\begin{document}
\begin{abstract}
Let $B$ be a piecewise hereditary algebra of affine type over an
algebraically closed field. In this paper we give a derived and stable
equivalence classification of twisted multifold extensions of $B$.
The types $\widetilde D$ and $\widetilde E$ are already covered by the
classification for tree type. We therefore consider the remaining types
$\Aaff pq$. We show that the $n$th power of the Nakayama functor on the
full subcategory of a transjective component determines the twist,
including its action on morphisms, up to conjugacy in the derived Picard
group of a hereditary algebra. Together with the hereditary derived type
and the folding number, this gives a complete invariant for derived
equivalence, and also for stable triangle equivalence. We then compute
these conjugacy classes in the unbalanced, balanced, bypass, and Kronecker
cases, keeping the finite rotation data. Finally, by the normalization
theorem of Asashiba--Kimura--Nakashima--Yoshiwaki, we obtain the same
classification for generalized multifold extensions with nonzero integer
jump.
\end{abstract}

\maketitle

\section{Introduction and the classification theorem}

The derived and stable equivalence classification of twisted multifold
extensions of piecewise hereditary algebras of tree type was established
in \cite{Asa}. Its type consists of the underlying tree, the folding
number, and a conjugacy class of graph automorphisms. The affine diagrams
$\widetilde D_r$ and $\widetilde E_6,\widetilde E_7,\widetilde E_8$ therefore
require no new tree-type argument. The remaining affine case is an
acyclic orientation of a cycle. In this case the action on the vertices
of the Auslander--Reiten quiver does not, in general, determine the twist:
an additional parameter can occur in the action on morphisms.

We first recall the notation for multifold extensions.
{Let $B$ be a finite-dimensional basic algebra over an algebraically closed field $\k$, and let}
$\widehat B$ be its repetitive category.
For each $i\in\bbZ$, the objects $x^{[i]}=(x,i)$,
with $x$ an object of $B$, form a full subcategory
$B^{[i]}$ isomorphic to $B$.
We call $B^{[i]}$ the {\emph{$i$th layer}} of $\widehat B$.
The Nakayama automorphism $\nu_B$ sends $x^{[i]}$
to $x^{[i+1]}$ and hence sends $B^{[i]}$ to $B^{[i+1]}$.

For $\ph\in\Aut(B)$, let $\widehat\ph$ be its standard
lift to $\widehat B$, which preserves the layer index.
{For each integer $n\geq1$, the \emph{twisted $n$-fold extension}} of $B$ by
$\ph$ is
\[
 T_\ph^n(B)
 :=\widehat B/\langle\widehat\ph\nu_B^n\rangle.
\]
\begingroup
We call $B$ the \emph{base algebra} and $n$ the
\emph{folding number} of this presentation of $T_\ph^n(B)$.
\par\endgroup
Here the quotient is the classical orbit category,
which we identify with its associated basic algebra.

More generally, we say that an automorphism $g$ of
$\widehat B$ has \emph{jump} $n\in\bbZ$ if
\[
 g(B^{[i]})=B^{[i+n]}
 \qquad\text{for every }i\in\bbZ.
\]
Thus $g$ shifts every layer by the same integer $n$.
For such an automorphism with $n\ne0$, we call
$\widehat B/\langle g\rangle$ a
\emph{generalized multifold extension} of $B$.
In particular, $\widehat\ph\nu_B^n$ has jump $n$,
so twisted multifold extensions are special cases.
Replacing $g$ by $g^{-1}$ does not change the
orbit algebra and allows us to assume that $n>0$.
We give the detailed constructions and conventions
in Section~2.

We distinguish our terminology from that of \cite{BHS04}.
In that paper, a selfinjective algebra of \emph{Euclidean type}
is an orbit algebra $\widehat B/G$ with $B$ tilted from a
hereditary algebra whose underlying graph is an extended Dynkin
diagram and $G$ an admissible group of automorphisms.
In the present paper, the base algebra is only required to be
\emph{piecewise hereditary}: by this we mean that it is derived
equivalent to a hereditary algebra.
When that hereditary algebra has an extended Dynkin diagram, we
say that the base algebra is of \emph{affine type}.
The extensions considered here inherit this designation from
their base algebras.
Thus we use \emph{Euclidean type} when discussing the tilted-base
setting of \cite{BHS04}, and \emph{affine type} for our
piecewise hereditary setting.
This is a convention of the present paper, not a distinction
between two kinds of underlying diagrams.
We do not require the base algebra to be tilted.
Our arguments use a derived equivalence with a hereditary algebra
together with Happel's equivalence for the repetitive algebra.

The motivation for the present paper comes from the counterexamples of
Asashiba, Itaba and Saor\'in \cite{HH}.
Bocian, Holm and Skowro\'nski \cite[Theorem~1]{BHS04}
claimed a complete derived equivalence classification of weakly
symmetric algebras of Euclidean type with singular Cartan matrix.
The family $(\La_\la)_{\la\in\k^\times}$ introduced in \cite{HH}
satisfies these hypotheses.
However, its members with $\la\ne1$ are missing from that
classification up to derived equivalence.
By \cite[Propositions~2.1 and~2.4]{HH}, the algebras $\La_\la=T^1_{\ph_\la}(B)$,
where $B$ is hereditary of type $\Aaff32$, have isomorphic stable
Auslander--Reiten quiver for all $\la\in \k^\times$. On the other hand, \cite[Theorem~3.1]{HH} gives
\[
 \dim_\k HH^2(\La_\la)=
 \begin{cases}
 1,&\la\neq1,\\
 2,&\la=1.
 \end{cases}
\]
Hence $\La_\la$ with $\la\neq1$ is not derived equivalent
to $\La_1$.
This non-equivalence also follows from
\cite[Proposition~2.2]{HH}, which states that $\La_\la$
is symmetric if and only if $\la=1$, because symmetry
is preserved under derived equivalence.
These counterexamples show that the stable
Auslander--Reiten quiver does not contain all the information required
for the classification.

In this paper we record the action on morphisms and classify,
up to derived equivalence, twisted multifold extensions together
with generalized multifold extensions having nonzero integer jump,
over piecewise hereditary bases of affine type.
This family includes all the algebras $\La_\la$ above.
We do not restrict the base to be tilted, nor do we impose weak
symmetry on the extensions.
In particular, we prove that, {for all $\la,\mu\in\k^\times$,}
\[
 \La_\la\dereq\La_\mu
 \quad\Longleftrightarrow\quad \la=\mu
\]
(see \corref{cor:HH-family}).
This proves the conjecture stated in the acknowledgments of
\cite{HH}.
A generalization of the results of this paper will be treated
in a subsequent paper.

We use the term \emph{transjective component} as follows.
For a connected hereditary algebra $H$ of infinite
representation type, let $\calT$ be the Auslander--Reiten
component of $\Db(\mod H)$ containing the indecomposable
projective modules. Its vertices are represented by the
indecomposable preprojective modules and the objects $I[-1]$
with $I$ indecomposable preinjective.
We call $\calT$ and its shifts the transjective components.
These are the irregular components in the terminology of
\cite[Definition~2.2]{MY}.
For the selfinjective extensions considered below, we use
the same term for the corresponding stable Auslander--Reiten
components obtained through Happel's equivalence and the
stable pushdown functor.

We first identify the full subcategory on a
transjective component of the stable module category of a twisted
$n$-fold extension with the mesh category of the repetition of a
hereditary quiver. We then show that the $n$th power of the Nakayama
functor determines the inverse twist on this category. By the
restriction theorem of Miyachi--Yekutieli \cite{MY}, equivalence of
these functors is expressed as conjugacy in the derived Picard group.
The converse follows from equivariant tilting.

Throughout the main text we fix an algebraically closed field $\k$.
All algebras are assumed to be finite-dimensional, basic, and connected,
and all functors and equivalences to be $\k$-linear.

{A parenthesized range attached to a formula means that the formula holds for all values in that
range. For a defining assignment, it means that the assignment is made for all such values. Existential
quantifiers are always stated explicitly.}

By a
\emph{stable equivalence} we mean a triangle equivalence between the
stable module categories of selfinjective algebras.
(Appendix~\ref{app:additive-stable} shows that the triangle
assumption can be omitted in the classification theorem.
We give this refinement separately so that readers interested
only in the derived equivalence classification may omit that appendix.)

We use the following set of hereditary derived types:
\begin{equation}\label{eq:types}
 \De\in
\{\Aaff pq\mid {p,q\in\bbZ,\ p\geq q\geq1}\}
 \ \cup\ \{\widetilde D_r\mid {r\in\bbZ,\ r\geq4}\}
 \ \cup\ \{\widetilde E_6,\widetilde E_7,\widetilde E_8\}.
\end{equation}
Here $\Aaff pq$ has $p+q$ vertices and {records the numbers of arrows
in the two cyclic directions;} $\Aaff11$ denotes the Kronecker type. Thus
$\De$ records more than the unoriented affine diagram in type
$\widetilde A$.

A triangle equivalence
$F:\Db(\mod B)\longrightarrow\Db(\mod C)$
is called \emph{standard} if it is naturally isomorphic, as a
triangle functor, to $-\ox_B^{\mathbf L}X$ for some two-sided
tilting complex $X$ of $B$-$C$-bimodules; see \cite{Ric91}.
For each equivalence $F$, we choose a quasi-inverse and denote
it by $F^-$, so that
\[
 F^-F\mathrel{{\cong}}\mathrm{Id},\qquad FF^-\mathrel{{\cong}}\mathrm{Id}
\]
naturally as triangle functors.

We regard $\DPic_\k(H)$ as the group of natural-isomorphism
classes of standard derived autoequivalences of
$\Db(\mod H)$, with multiplication induced by composition.
For such an autoequivalence $G$, we denote its
natural-isomorphism class by $\operatorname{cl}(G)$.

{Fix a hereditary algebra $H_\De$ for each type $\De$ in \eqref{eq:types}.}
If $B$ is piecewise hereditary of type $\De$, choose a
standard equivalence
\[
 F:\Db(\mod B)\longrightarrow\Db(\mod H_\De).
\]
For $\ph\in\Aut(B)$, let $\Ph$ be the induced derived
autoequivalence, with the convention that
$\Ph B(\blank, x)\mathrel{{\cong}} B(\blank, \ph x)$ {for every object $x$ of $B$.} Set
\begin{equation}\label{eq:gamma}
 \ga_{B,\ph}=\operatorname{cl}(F\Ph F^-)
 \in\DPic_\k(H_\De).
\end{equation}
Since any two quasi-inverses of $F$ are naturally isomorphic,
this element is independent of the choice of $F^-$.

Let $F'$ be another standard equivalence with the same source
and target. Put $U=F'F^-$ and choose
$U^-=F(F')^-$ as a quasi-inverse of $U$. Then
\[
 F'\Ph(F')^-\mathrel{{\cong}} U(F\Ph F^-)U^-
\]
naturally as triangle functors. Hence the element defined
using $F'$ is
\[
 \operatorname{cl}(U)\,\ga_{B,\ph}\,
 \operatorname{cl}(U)^{-1}.
\]
Thus the conjugacy class of $\ga_{B,\ph}$ is independent
of the choice of $F$.

From now on, when an autoequivalence $F$ is regarded as an
element of $\DPic_\k(H)$, we write $F$ for
$\operatorname{cl}(F)$ and $F^{-1}$ for
$\operatorname{cl}(F)^{-1}=\operatorname{cl}(F^-)$.
When an actual quasi-inverse functor is needed, we retain
the notation $F^-$.
For an element $\ga\in\DPic_\k(H)$, the brackets $[\ga]$
denote its conjugacy class. In particular, $[F]$ denotes
the conjugacy class of $\operatorname{cl}(F)$ under this
convention.

\begin{dfn}\label{def:type}
For a presentation $\La=T_\ph^n(B)$, $n\geq1$, define
\[
 \typ(\La)=(\De,n,[\ga_{B,\ph}]).
\]
Here the subscript $\mathrm{aff}$ stands for affine.
The independence of this triple from the presentation follows from
\thmref{thm:main}. 
The third component is not an arbitrary conjugacy class in
$\DPic_\k(H_\De)$. It must be represented by $F\Ph F^-$,
where $B$ is a piecewise hereditary algebra of type $\De$,
$\ph\in\Aut(B)$, $\Ph$ is the derived autoequivalence induced
by $\ph$, and $F:\Db(\mod B)\to\Db(\mod H_\De)$ is a
standard equivalence.
\end{dfn}

\begin{thm}\label{thm:main}
Let $\La=T_\ph^n(B)$ and $\La'=T_\ps^{n'}(B')$ be twisted
multifold extensions of piecewise hereditary algebras of affine type.
The following are equivalent:
\begin{enumerate}
\item\label{item:main-derived} $\La$ and $\La'$ are derived equivalent;
\item\label{item:main-stable} $\La$ and $\La'$ are stably equivalent;
\item\label{item:main-type} $\typ(\La)=\typ(\La')$.
\end{enumerate}
The same criterion holds for generalized multifold extensions with
nonzero integer jump, after choosing the generator with positive jump
and applying the normalization in \thmref{thm:normalization}.
\end{thm}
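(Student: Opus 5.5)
The cycle of implications I would prove is (\ref{item:main-type})$\To$(\ref{item:main-derived})$\To$(\ref{item:main-stable})$\To$(\ref{item:main-type}).

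\emph{(\ref{item:main-derived})$\To$(\ref{item:main-stable}).} This is Rickard's theorem: a derived equivalence between selfinjective algebras induces a triangle equivalence of stable module categories.

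\emph{(\ref{item:main-type})$\To$(\ref{item:main-derived}).} Here I would use equivariant tilting. Suppose $\typ(\La)=\typ(\La')$. Then $B$ and $B'$ are both derived equivalent to $H_\De$, so there is a standard equivalence $E\colon\Db(\mod B)\to\Db(\mod B')$. Since the conjugacy classes agree, $E$ can be chosen (by composing with a suitable element of $\DPic_\k(H_\De)$) so that $E\Ph E^-\cong\Ps$ as standard functors. Through Happel's embedding $\Db(\mod B)\simeq\umod\widehat B$, the functor $E$ lifts to a derived equivalence $\widehat B\dereq\widehat{B'}$. This equivalence intertwines the Nakayama functors, because every standard equivalence commutes with the Serre functor. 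It also intertwines $\widehat\ph$ with $\widehat\ps$. Hence it commutes with the group actions of $\langle\widehat\ph\nu_B^n\rangle$ and $\langle\widehat\ps\nu_{B'}^{n}\rangle$, using $n=n'$. An equivariant tilting subcategory then descends through the Galois covering (via the Asashiba covering technique for orbit categories) to a tilting complex over $\La$ whose endomorphism algebra is $\La'$.

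\emph{(\ref{item:main-stable})$\To$(\ref{item:main-type}).} This is the main step, and I would argue in the following order.
\begin{enumerate}
\item A stable equivalence $\umod\La\simeq\umod\La'$ preserves the stable Auslander--Reiten quiver, and therefore $\De$. In type $\widetilde A$ the pair $(p,q)$ is read off from the shape of the tubes, or from the transjective component $\bbZ\De$. It also preserves the number of transjective components, which equals $n$.
\item The stable equivalence maps a transjective component to a transjective component. I would identify the full subcategory on that component with the mesh category $\k(\bbZ\De)$. This uses the stable pushdown functor from $\umod\widehat B\simeq\Db(\mod H_\De)$; in the affine, non-tree case the mesh category must be taken with the correct commutativity relations.
\item On this subcategory, the $n$th power of the Nakayama functor of $\La$ (equivalently $\Omega^{-?}\tau$ up to a shift) restricts to the inverse twist $(\ga_{B,\ph})^{-1}$, viewed through its action on the transjective component. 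Since a stable triangle equivalence commutes with the Nakayama functors, the restrictions for $\La$ and $\La'$ are conjugate by an autoequivalence of the mesh category.
\item By the Miyachi--Yekutieli restriction theorem \cite{MY}, autoequivalences of $\Db(\mod H_\De)$ are determined up to isomorphism by their restriction to the transjective component $\calT$. Conjugacy of the restricted functors therefore lifts to conjugacy in $\DPic_\k(H_\De)$, which gives $[\ga_{B,\ph}]=[\ga_{B',\ps}]$.
\end{enumerate}

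\emph{Main obstacle.} I expect the main difficulty to be step~(3) together with the lifting in step~(4). The action on morphisms carries the extra scalar, as the family $\La_\la$ shows, so vertex data alone is insufficient. One must show that the conjugating equivalence of mesh categories is itself induced by a standard derived autoequivalence, which requires controlling natural isomorphisms rather than just the action on objects. I would first try to rigidify the restricted functors by normalizing them on arrows of a section of $\bbZ\De$.

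\emph{Generalized extensions.} For generalized extensions, choose the generator with positive jump and apply \thmref{thm:normalization}. This rewrites $\widehat B/\langle g\rangle$ as some $T^n_{\ph'}(B'')$ with $B''$ derived equivalent to $B$, so the criterion reduces to the twisted case just proved.
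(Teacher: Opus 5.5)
Your outline is sound and uses the same core tools as the paper, but it is organized differently. Those shared tools are:
\begin{itemize}
\item Rickard's theorem for (1)$\To$(2);
\item for (2)$\To$(3), the identification of a full transjective component with the mesh category via the stable pushdown, recovery of the inverse twist from $N^n$, commutation of triangle equivalences with $\Om$ and the Serre functor, and the Miyachi--Yekutieli isomorphism of Proposition~\ref{prop:MY};
\item for (3)$\To$(1), equivariant tilting and Asashiba's covering theorem.
\end{itemize}

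The paper never runs the covering argument over $\widehat B$ for non-hereditary $B$. It first proves the hereditary reduction, Proposition~\ref{prop:reduction}. That reduction rests on Lemma~\ref{lem:finite}, which needs special orientation models and the computation of $\Aut(\bbZ Q,\ta)$. It lets the paper replace $T^n_\ph(B)$ by $T^n_{\ph_0}(H_\De)$, with $\ph_0$ a genuine automorphism of $H_\De$. The paper then proves the criterion only for these extensions (Theorem~\ref{thm:hereditary}) and handles the tree types separately through \cite{Asa}.

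You bypass all of this. For (3)$\To$(1) that is already legitimate. The Proposition in Appendix~\ref{app:descent} is stated for arbitrary $B,C$ with $\Ph U\cong U\Ps$, so your $E$ with $E\Ph E^-\cong\Ps$ suffices. One correction: the lift to repetitive categories is not obtained ``through Happel's embedding'' from commutation with Serre functors. It is the layer-placement tilting subcategory $\widehat\calE\subseteq\Kb(\prj\widehat B)$, which is $\nu$-equivariant by construction.

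What each route buys: yours is uniform in $\De$ and avoids Lemma~\ref{lem:finite}. The paper's gets all covering-theoretic input for $\widehat H$ off the shelf, makes the twist visibly preserve $\mod H$, and the reduction is needed anyway for Theorem~\ref{thm:table}.

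For (2)$\To$(3), your route must supply what the reduction gives the paper for free.
\begin{itemize}
\item Most importantly, $\ga_{B,\ph}$ must map the transjective component $\calB$ to itself. Both Lemma~\ref{lem:vanish} and the restriction of $N^n$ to a single component depend on this. It holds for three reasons: $\ph$ has finite order in $\Out(B)/\Out^0(B)$; conjugation by $F$ carries $\Out^0(B)$ onto $\Out^0(H)$ \cite{Yek}; and $\Out^0(H)\subseteq\Pic(H)$ preserves modules. Hence the component-shift homomorphism $\DPic_\k(H)\to\bbZ$ of Proposition~\ref{prop:MY} vanishes on $\ga_{B,\ph}$.
\item You also need local support-finiteness of $\widehat B$, to get density of the pushdown and the list of components. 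You further need compatibility of Happel's functor with $\widehat\ph$ for piecewise hereditary $B$. Both are obtainable, but the paper proves them only over $\widehat H$.
\end{itemize}

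Two smaller points. First, there are $2n$ transjective components, not $n$. Second, the obstacle you anticipate in step~(4) is already removed by Proposition~\ref{prop:MY}. Since it is an isomorphism, every autoequivalence of $\calB$ lifts to a standard derived autoequivalence preserving the component, so no normalization on a section is needed.
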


In tree type the third entry is exactly the conjugacy-class datum in
\cite[Definition~8.2]{Asa}. In type $\widetilde A$ the explicit
possibilities are given by \thmref{thm:table}.

The third component of our invariant is a conjugacy class,
not merely the order of a representative.
Even for graph automorphisms, two elements of the same order
need not be conjugate. For example, the automorphism group
of the graph $\widetilde D_4$ is $S_4$, acting on its four
outer vertices. The permutations $(12)$ and $(12)(34)$
both have order two, but they are not conjugate.
Thus replacing the conjugacy class by the order would lose
information needed for the classification.

A further point occurs in the balanced case $p=q=m>1$.
Consider the twists whose graph part is a rotation.
Write their parameters as $(\la,a)$, where $\la\in\kk$
is the scalar parameter and $a\in{\bbZ/m\bbZ}$ specifies the
power $r^a$ of a rotation $r$ of order $m$.
As shown in \thmref{thm:table}, two such pairs represent
the same conjugacy class precisely when they are equal
or are related by
\[
 (\la,a)\longmapsto(\la^{-1},-a).
\]
The scalar and the rotation are therefore inverted
simultaneously, not independently.
For example, over $\bbC$ with $m=3$, the pair $(2,1)$
is identified with $(1/2,2)$, but with neither $(2,2)$
nor $(1/2,1)$.
Consequently, recording the scalar up to inversion and
the rotation up to inversion separately would identify
distinct conjugacy classes.

The new part of the proof is the $\widetilde A$ case. Combining this
with the tree-type classification and the normalization of generalized
extensions gives the statement for all affine types. Within the
family in \thmref{thm:main}, we impose no further assumption of
symmetry, polynomial growth, or a specified representation type.
The classification concerns this family of extensions.

This paper is organized as follows. In Section~2 we recall the
conventions and results used in the proof. Section~\ref{sec:tree}
explains the tree cases. In Sections~\ref{sec:mesh} and~\ref{sec:reduction}
we prove the criterion in type $\widetilde A$ and the
reduction to hereditary algebras.
We compute the parameters in Section~\ref{sec:parameters}
and list the tree-type parameters in Section~\ref{sec:treetable}.
Section~\ref{sec:generalized} treats generalized extensions, and
Section~\ref{sec:examples} gives consequences and examples.
Appendix~\ref{app:descent} gives the equivariant tilting argument.
Appendix~\ref{app:standard-facts} recalls the identification of
the Picard group of a basic algebra with its outer automorphism
group and with natural-isomorphism classes of automorphisms
of the associated category.
Appendix~\ref{app:additive-stable} proves that the classification
remains valid for $\k$-linear equivalences of stable module
categories that are not assumed to be triangle equivalences.

\section{Preliminaries and conventions}
\label{sec:conventions}

{A subcategory is \emph{closed under isomorphisms} if every object
of its ambient category isomorphic to one of its objects also belongs to it.
Fullness alone does not impose this condition.}

Let $B$ be a finite-dimensional basic connected $\k$-algebra.
Fix a complete set $B_0$ of pairwise orthogonal primitive
idempotents of $B$. We regard $B$ as the finite $\k$-category
with object set $B_0$ and morphism spaces
$B(x,y)=yBx$ {for all $x,y\in B_0$.}
Throughout this paper, automorphisms of $B$ and of its
repetitive category $\widehat B$ mean automorphisms of
$\k$-categories. They are allowed to permute the objects.

We use the following notation for the
repetitive category $\widehat B$, as in \cite[Definition~1.1]{AK} and
\cite[Definition~1.2]{AKNY}. {We write $D=\Hom_\k(\blank,\k)$ for the vector-space dual.}
Its objects are
\[
 x^{[i]}:=(x,i)\qquad(x\in B_0,\ i\in\bbZ ),
\]
{For all $x,y\in B_0$ and $i,j\in\bbZ$, its morphism spaces are given by}
\[
 \widehat B(x^{[i]},y^{[j]})=
 \begin{cases}
 B(x,y),&j=i,\\
 DB(y,x),&j=i+1,\\
 0,&\text{otherwise}.
 \end{cases}
\]
\begingroup
{For each $i\in\bbZ$, we call $B^{[i]}$ and $B^{[i+1]}$ \emph{adjacent layers}.}
A morphism $x^{[i]}\to y^{[i]}$ is called a
\emph{same-layer morphism}, and a morphism
$x^{[i]}\to y^{[i+1]}$ an \emph{adjacent-layer morphism}.
\par\endgroup
{For all $x,y\in B_0$, $i\in\bbZ$, and $f\in B(x,y)$ (resp.\ $\be\in DB(y,x)$), we denote the}
corresponding morphism from $x^{[i]}$ to $y^{[i]}$
(resp.\ from $x^{[i]}$ to $y^{[i+1]}$) by $f^{[i]}$
(resp.\ $\be^{[i]}$). 
{
{We write $gf=g\circ f$ for composition.} To specify composition in
$\widehat B$, consider composable morphisms
\[
 x^{[i]}\xrightarrow{f}y^{[j]}\xrightarrow{g}z^{[\ell]}.
\]
There are four cases in which both morphism spaces can be nonzero.
In the formulas below, the same letters denote the corresponding
morphisms or linear functionals in $B$ and $DB$.
\begin{enumerate}
\item\label{item:composition-same} If $j=\ell=i$, then $f\in B(x,y)$ and $g\in B(y,z)$.
Their composite is the ordinary composite $gf\in B(x,z)$.
Thus $g^{[i]}f^{[i]}=(gf)^{[i]}$.
\item\label{item:composition-left} If $j=i$ and $\ell=i+1$, then
$f\in B(x,y)$ and $g\in DB(z,y)$.
Their composite is the functional $gf\in DB(z,x)$ given by
\[
 (gf)(h)=g(fh)\qquad\bigl(h\in B(z,x)\bigr).
\]
Here $fh\in B(z,y)$, so the evaluation on the right is defined.
\item\label{item:composition-right} If $j=\ell=i+1$, then
$f\in DB(y,x)$ and $g\in B(y,z)$.
Their composite is the functional $gf\in DB(z,x)$ given by
\[
 (gf)(h)=f(hg)\qquad\bigl(h\in B(z,x)\bigr).
\]
Here $hg\in B(y,x)$.
\item\label{item:composition-zero} If $j=i+1$ and $\ell=i+2$, then
$f\in DB(y,x)$ and $g\in DB(z,y)$.
In this case $gf=0$, since
$\widehat B(x^{[i]},z^{[i+2]})=0$.
\end{enumerate}
The second and third cases use the $B$-$B$-bimodule structure of $DB$.

{For each $i\in\bbZ$, we denote by $B^{[i]}$ the full subcategory on the objects $x^{[i]}$ for all
$x\in B_0$.} The embedding of the $i$th layer is
\[
 1^{[i]}:B\longrightarrow B^{[i]}\subseteq\widehat B,
 \qquad x\longmapsto x^{[i]},\quad f\longmapsto f^{[i]}.
\]
{The \emph{Nakayama automorphism} $\nu_B$ is defined, for all $x,y\in B_0$, $i\in\bbZ$, $f\in B(x,y)$,
and $\be\in DB(y,x)$, by}
\[
 \nu_B(x^{[i]})=x^{[i+1]},\qquad
 \nu_B(f^{[i]})=f^{[i+1]},\qquad
 \nu_B(\be^{[i]})=\be^{[i+1]}.
\]
Let $\ph\in\Aut(B)$ be a $\k$-category automorphism.
Its \emph{standard lift} $\widehat\ph\in\Aut(\widehat B)$ is defined {on objects and same-layer
morphisms, for all $x,y\in B_0$, $i\in\bbZ$, and $f\in B(x,y)$, by}
\[
 \widehat\ph(x^{[i]})=\ph(x)^{[i]},\qquad
 \widehat\ph(f^{[i]})=\ph(f)^{[i]}.
\]
{For all $x,y\in B_0$ and $\be\in DB(y,x)$, define}
$\be_\ph\in DB(\ph(y),\ph(x))$ by
\[
 \be_\ph(h)=\be\bigl(\ph^{-1}(h)\bigr)
 \qquad\bigl(h\in B(\ph(y),\ph(x))\bigr){.}
\]
{The standard lift on adjacent-layer morphisms is given by}
\[
 \widehat\ph(\be^{[i]})=(\be_\ph)^{[i]}{\quad(i\in\bbZ).}
\]
Thus the action on these functionals is the dual of $\ph^{-1}$.
The standard lift preserves every layer index and commutes with
$\nu_B$. In particular, the following square commutes strictly:
\[
\begin{tikzcd}[column sep=large]
 B^{[i]} \arrow[r,"\widehat\ph"] \arrow[d,"\nu_B"'] &
 B^{[i]} \arrow[d,"\nu_B"] \\
 B^{[i+1]} \arrow[r,"\widehat\ph"'] & B^{[i+1]}.
\end{tikzcd}
\]
{The notation $X[i]$ denotes the shift of a complex.
It is distinct from the notation $x^{[i]}$ for an object
in the $i$th layer.}

We use the classical {\emph{$\k$-linear orbit category}}.
Let $g$ be an automorphism of $\widehat B$ with nonzero
integer jump, and set $G=\langle g\rangle$.
The objects of $\widehat B/G$ are the $G$-orbits
of objects of $\widehat B$.
For each orbit, choose one representative object.
If $X$ is the chosen representative, write
\[
 GX=\{g^rX\mid r\in\bbZ\}
\]
for its orbit.
{For any chosen representatives $X,Y$, we identify}
\[
 (\widehat B/G)(GX,GY)
 \mathrel{{\cong}}\Ds_{r\in\bbZ}\widehat B(g^rX,Y).
\]
{For any chosen representatives $X,Y,Z$, any $r,s\in\bbZ$, and morphisms $u_r:g^rX\to Y$ and
$v_s:g^sY\to Z$, their composite is represented by}
\[
 v_s\ast u_r=v_sg^s(u_r):g^{s+r}X\longrightarrow Z.
\]
Composition on arbitrary morphisms is obtained by
bilinearity.
Choosing different representatives gives an isomorphic
$\k$-category.
Its associated basic algebra is the direct sum of its morphism
spaces, with multiplication given by composition and zero for
noncomposable morphisms. 

\begin{dfn}
For $\ph\in\Aut(B)$ and an integer $n\geq1$, the \emph{twisted
$n$-fold extension} is the orbit category, or its associated basic
algebra,
\begin{equation}\label{eq:twisted}
 T_\ph^n(B)=\widehat B/\langle\widehat\ph\nu_B^n\rangle.
\end{equation}
A \emph{generalized multifold extension} considered here is
$\widehat B/\langle g\rangle$, where
$g\in\Aut(\widehat B)$ has a \emph{nonzero integer jump}:
there is an integer $n\ne0$ such that
\[
 g(B^{[i]})=B^{[i+n]}\qquad(i\in\bbZ).
\]
Replacing $g$ by $g^{-1}$ does not change the acting group
and permits us to assume $n>0$.
\end{dfn}

These orbit algebras are finite-dimensional and selfinjective.
{For any small $\k$-linear category $A$, we write $\Mod A$ for the category of all right $A$-modules
and $\mod A$ for its full subcategory of finite-dimensional modules.}
{Here $\Db(\mod B)$ denotes the bounded derived category of $\mod B$.}
We write $\umod\widehat B$ for the stable category of finite-dimensional
right $\widehat B$-modules: its morphisms are module homomorphisms
modulo those that factor through projective modules.
When $B$ has finite global dimension, Happel's equivalence is a
triangle equivalence
\begin{equation}\label{eq:Happel}
 E_B:\Db(\mod B)\xrightarrow{\ \sim\ }\umod\widehat B
 \qquad\text{\cite[Theorem~II.4.9]{Hap}}.
\end{equation}

Let $\Om$ denote the syzygy functor on the stable category.
The translation of this triangulated category is $\Om^{-1}$, so
\[
 E_B\circ[1]\cong\Om^{-1}\circ E_B
\]
as triangle functors. These functors are naturally isomorphic;
we do not identify them as equal functors.

Let $N$ denote the Nakayama functor induced on the stable category,
and let $\ta$ denote the Auslander--Reiten translation.
The natural isomorphism $N\cong\Om^{-2}\ta$ gives, for a hereditary
algebra $H$, the following compatibility with Happel's equivalence:
\[
\begin{tikzcd}[column sep=large,row sep=large]
 \Db(\mod H) \arrow[r,"E_H"] \arrow[d,"{\ta[2]}"'] &
 \umod\widehat H \arrow[d,"N_{\widehat H}"] \\
 \Db(\mod H) \arrow[r,"E_H"'] & \umod\widehat H.
\end{tikzcd}
\qquad
 E_H\circ\ta[2]\cong N_{\widehat H}\circ E_H.
\]
This square commutes up to natural isomorphism. The symbol $\ta$
on the derived side denotes its Auslander--Reiten translation.

We next recall two results used below. Write $\prj B$ for the
category of finitely generated projective right $B$-modules and
$\Kb(\prj B)$ for its bounded homotopy category.
{For a subcategory $\calE$ of a triangulated category,
$\operatorname{thick}(\calE)$ denotes the smallest full triangulated
subcategory of the ambient category that contains $\calE$ and is closed
under isomorphisms in that category and under direct summands.}
A {\emph{tilting spectroid}} for $B$ is a full subcategory $\calE$ of
$\Kb(\prj B)$ on pairwise nonisomorphic indecomposable objects such that
\[
\begin{gathered}
 \Kb(\prj B)(U,V[r])=0
 \quad(U,V\in\calE,\ {r\in\bbZ\setminus\{0\}}),\\
 \operatorname{thick}(\calE)=\Kb(\prj B).
\end{gathered}
\]
For $\ph\in\Aut(B)$, let $\Ph$ be the induced derived
autoequivalence, with the convention
$\Ph B(\blank,x)\cong B(\blank,\ph(x))$ {for every object $x$ of $B$,} used in Section~1.

For finite-dimensional $\k$-algebras $A$ and $B$, we say {that} 
$A$ and $B$ are {\emph{derived equivalent}}, and write
$A\dereq B$ if there is a $\k$-linear triangle equivalence
\[
 \Db(\mod A)\simeq\Db(\mod B).
\]

\begin{prp}[Equivariant tilting]\label{prop:tilting}
Suppose that a tilting spectroid $\calE$ for $B$ is stable under
$\Ph$ up to isomorphisms, and that $\calE\simeq C$.
After replacing $\calE$ by an isomorphic tilting spectroid
that is strictly stable under the induced action,
transport that action to an automorphism $\ps$ of $C$.
Then
\[
 T_\ph^n(B) \dereq T_\ps^n(C)
 \qquad(n\geq1).
\]
{If $C(x,x)=\k$ for every object $x$ of $C$,} replacing the transported action by a
naturally isomorphic action does not change the orbit algebra up to
isomorphism.
\end{prp}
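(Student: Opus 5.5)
The plan is to realize the transported automorphism as an automorphism of a repetitive category that is compatible with $\nu$. Then the derived equivalence follows from Rickard's theorem applied to a tilting complex over the orbit algebra. Compatibility will hold because the construction is $\langle\widehat\ph\nu_B^n\rangle$-equivariant.

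Let $T=\Ds_{E\in\calE}E$ be the tilting complex. We may assume that $\Ph$ permutes the objects of $\calE$ strictly, which gives $\ps$ on $C\simeq\End(T)$. We lift the construction to the repetitive level through Happel's equivalence \eqref{eq:Happel}. The images $E_B(\nu^i\calE)$ for $i\in\bbZ$ form a full subcategory $\widehat\calE$ of $\umod\widehat B$, equivalently of $\Kb(\prj\widehat B)$ restricted to the appropriate layers. By the standard argument for repetitive categories (Rickard's theorem that derived equivalent algebras have derived equivalent trivial extensions, in the form of Asashiba's covering version), $\widehat\calE$ is a tilting subcategory of $\Kb(\prj\widehat B)$ with $\widehat\calE\cong\widehat C$. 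Under this isomorphism $\nu_B$ corresponds to $\nu_C$. Since $\Ph$ commutes with the Serre functor up to isomorphism, the lift $\widehat\ph$ corresponds to $\widehat\ps$.

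Next we check strictness. We must arrange that $\widehat\ph\nu_B^n$ acts on $\widehat\calE$ strictly, and that the induced automorphism of $\widehat C$ equals $\widehat\ps\nu_C^n$ and not merely a natural-isomorphism twist of it. This is done by fixing representatives on a fundamental domain, the layers $0,\dots,n-1$, and transporting along the group action, which is free on objects because the jump is nonzero. The isomorphism $\widehat\calE\cong\widehat C$ is then $G$-equivariant for $G=\langle\widehat\ph\nu_B^n\rangle$ and $G'=\langle\widehat\ps\nu_C^n\rangle$.

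Then we push down. The pushdown functor $\Kb(\prj\widehat B)\to\Kb(\prj\widehat B/G)$ sends $\widehat\calE$ to a set of complexes $\calE'$ in $\Kb(\prj T^n_\ph(B))$. Its morphism spaces are the direct sums $\Ds_{r}\Kb(\prj\widehat B)(g^rU,V[m])$ by the covering (Galois) property. These vanish for $m\neq0$ because $\widehat\calE$ is tilting and $g$ preserves it. For $m=0$ they give exactly the orbit category $\widehat C/G'=T^n_\ps(C)$. Generation follows because $\widehat\calE$ generates and pushdown is dense onto projectives. Rickard's Morita theorem then yields $T^n_\ph(B)\dereq T^n_\ps(C)$.

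For the final sentence, take a naturally isomorphic action $\ps'=\ps$ up to $\eta:\ps\To\ps'$. Since $C(x,x)=\k$, $\eta$ is given by scalars. Then $\ps'$ and $\ps$ differ by an inner automorphism conjugating by $\eta$, so $\widehat{\ps'}\nu^n$ and $\widehat\ps\nu^n$ are conjugate by an automorphism of $\widehat C$ that rescales the objects. Their orbit categories are therefore isomorphic.

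The main obstacle is the strictness issue in the second step. Here one needs a $2$-categorical argument (a $G$-equivariant structure on the covering functor) rather than mere isomorphism classes, and I would handle it through the fundamental-domain choice described there.
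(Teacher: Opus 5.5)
Your overall route matches the paper's. For the first assertion the paper cites \cite[Remark~3.5 and Proposition~3.6]{Asa}, and Appendix~\ref{app:descent} spells out the construction: lift a strictly $\widetilde\ph$-stable tilting spectroid to a tilting subcategory of $\Kb(\prj\widehat B)$ isomorphic to $\widehat C$, check equivariance, and descend along the covering \cite[Theorem~4.11]{Asa11}. Your explicit pushdown followed by Rickard's Morita theorem is a fine substitute for that last citation.

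\textbf{The lifting step fails as you describe it.} Happel's functor $E_B$ takes values in $\umod\widehat B$, and there the indecomposable projectives $\widehat B(\blank,x^{[i]})$ are zero. So a subcategory of $\umod\widehat B$ is not ``equivalently'' a tilting subcategory of $\Kb(\prj\widehat B)$. It does not even carry the repetitive morphisms. For $B=C=\k$ and $\calE=\{\k\}$, your objects are simple $\widehat\k$-modules, with no nonzero stable morphisms between distinct ones. By contrast, $\widehat\k(x^{[i]},x^{[i+1]})=\k$ for the unique object $x$ of $\k$. What works is placement in layers. For a complex $X$ of projective $B$-modules, form $X^{[i]}$ termwise via $B(\blank,x)\mapsto\widehat B(\blank,x^{[i]})$. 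Then $\Hom_{\widehat B}\up(X^{[i]},Y^{[j]})$ is $\Hom_B\up(X,Y)$, $D\Hom_B\up(Y,X)$, or $0$ according as $j=i$, $j=i+1$, or otherwise. This gives tilting, generation, and the identification with the repetition of $\calE$.

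\textbf{Strictness is automatic with this construction.} The strictness issue you single out does not arise. A Serre-functor argument is not what is needed, since it would give neither strictness nor the action on adjacent-layer morphisms. In the formal additive hull, $\widehat\ph\nu_B^n$ acts termwise, so $\widehat\ph\nu_B^n(X^{[i]})=(\widetilde\ph X)^{[i+n]}$ on the nose. On $D\Hom_B\up(Y,X)$ it acts by the inverse dual, exactly as the standard lift is defined. Transporting along $J$ therefore gives precisely $\widehat\ps\nu_C^n$.

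\textbf{Freeness is needed in your last paragraph, and you skip it.} Take $C$ skeletal, so that $\ps'(x)=\ps(x)$. Write $\eta_x=d_{\ps(x)}1_{\ps(x)}$ and $\ps'=\iota_d\ps$, where $\iota_d$ multiplies $f\in C(u,v)$ by $d_v/d_u$. In general $\ps$ and $\ps'$ are not conjugate in $\Aut(C)$ (take $\ps=\id$ and $d$ nonconstant), so your ``so'' needs an argument. Here is one.
\begin{itemize}
\item The standard lift of $\iota_d$ is the rescaling $\iota_{\hat d}$ of $\widehat C$ with $\hat d(u^{[i]})=d_u$, because the inverse dual also multiplies adjacent-layer morphisms by $d_v/d_u$. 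Hence $\widehat{\ps'}\nu_C^n=\iota_{\hat d}\,g$ for $g=\widehat\ps\nu_C^n$.
\item One has $\iota_eg\iota_e^{-1}=\iota_{e/(e\circ g^{-1})}\,g$. So it remains to solve $e(gX)=\hat d(gX)\,e(X)$.
\item This is solvable because $\langle g\rangle$ acts freely on objects: for instance, put $e=1$ on the layers $0,\dots,n-1$ and extend.
\end{itemize}
This is the content of \cite[Theorem~4.4]{AKNY}, which the paper cites at this point.
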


\begin{proof}
\emph{The first assertion.}
We first construct a transported automorphism $\ps$ of
$C$ such that
\[
 T_\ph^n(B)\dereq T_\ps^n(C)
 \qquad(n\geq1).
\]
{Use the model of $\prj B$ whose objects are finite formal direct sums of representable projectives
and whose morphisms are matrices of morphisms. We call it the \emph{formal additive hull}. Work in its bounded
homotopy category, and denote by $\widetilde\ph$ the}
 automorphism induced by $\ph$.
By \cite[Remark~3.5]{Asa}, we may replace $\calE$ by
an isomorphic tilting spectroid $\calE'$ that is strictly
stable under $\widetilde\ph$.
Choose an isomorphism $J:\calE'\to C$ and define
\[
 \ps=J(\widetilde\ph|_{\calE'})J^{-1}.
\]
Here $J$ is an isomorphism of categories, so $J^{-1}$
is its inverse functor.
Then the following diagram commutes:
\[
\begin{tikzcd}[column sep=large]
 \calE' \arrow[r,"J"]
         \arrow[d,"\widetilde\ph|_{\calE'}"'] &
 C \arrow[d,"\ps"] \\
 \calE' \arrow[r,"J"'] & C.
\end{tikzcd}
\]
Therefore \cite[Proposition~3.6]{Asa} gives
$T_\ph^n(B)\dereq T_\ps^n(C)$ for every $n\geq1$.

\medskip
\emph{The last assertion.}
Assume that $C(x,x)=\k 1_x$ for every object $x$ {of $C$.}
Let $\ps'$ be an automorphism of $C$ naturally
isomorphic to $\ps$. We prove that
\[
 T_\ps^n(C)\cong T_{\ps'}^n(C)
 \qquad(n\geq1).
\]
Choose a natural isomorphism
$\alpha:\ps\Rightarrow\ps'$.
{Since $C$ is skeletal, $\ps(x)=\ps'(x)$ for every object $x$ of $C$. For each such $x$, there is a
scalar $c_x\in\k^\times$ such that $\alpha_x=c_x1_{\ps(x)}$.}
{For all objects $x,y$ of $C$ and every $f\in C(x,y)$, naturality gives}
\[
 c_y\ps(f)=\ps'(f)c_x.
\]
Fix $n\geq1$ and put
\[
 g=\widehat\ps\nu_C^n,
 \qquad h=\widehat{\ps'}\nu_C^n.
\]
These automorphisms have the same action on objects and
have jump $n$. In the notation of \cite{AKNY}, their
induced automorphisms $g_0,h_0$ of $C$ are $\ps,\ps'$,
respectively.
Thus the map $x\mapsto c_x$ satisfies the hypothesis
of \cite[Theorem~4.4]{AKNY}.
That theorem gives
\[
 T_\ps^n(C)
 =\widehat C/\langle g\rangle
 \cong\widehat C/\langle h\rangle
 =T_{\ps'}^n(C).
\]
This proves the last assertion.
\end{proof}

For a quiver $Q$, write $\bbZ Q$ for its repetition quiver and
$I_{\mathrm{mesh}}$ for the ideal generated by its mesh relations.
Thus $\k(\bbZ Q,I_{\mathrm{mesh}})$ is the mesh category.
For a $\k$-category $\calB$, $\Out_\k(\calB)$ denotes the group of
autoequivalences modulo natural isomorphism. As in Section~1,
$\DPic_\k(H)$ is the derived Picard group: the group of
natural-isomorphism classes of standard derived autoequivalences.

\begin{prp}[Full mesh category and restriction]\label{prop:MY}
Let $H=\k Q$ be connected hereditary of infinite representation type.
Let $\calB$ be the full subcategory of $\Db(\mod H)$ on representatives
of the transjective component containing the projective modules.
Then
\begin{equation}\label{eq:mesh}
 \calM_Q:=\k(\bbZ Q,I_{\mathrm{mesh}})\simeq\calB,
 \qquad
 \DPic_\k(H)\mathrel{{\cong}}\Out_\k(\calB)\times\bbZ.
\end{equation}
We describe the second isomorphism explicitly.
Let $F$ be a standard derived autoequivalence of $H$.
There is a unique integer $m$ such that $F$ sends the
chosen transjective component to its shift by $m$.
Set
\[
 G=[-m]\circ F.
\]
Then $G$ preserves the chosen component.
For each object $X$ of $\calB$, choose an object
$G_0(X)$ of $\calB$ such that $G(X)\mathrel{{\cong}} G_0(X)$.
Fix an isomorphism
\[
 \theta_X:G(X)\longrightarrow G_0(X).
\]
For each morphism $f:X\to Y$ in $\calB$, define
\[
 G_0(f)=\theta_Y\circ G(f)\circ\theta_X^{-1}.
\]
This defines an autoequivalence
$G_0:\calB\longrightarrow\calB$.
Its natural-isomorphism class does not depend on the
choices of representatives and isomorphisms.
The second isomorphism sends the class of $F$ to
\[
 \bigl(\operatorname{cl}(G_0),m\bigr)
 \in\Out_\k(\calB)\times\bbZ.
\]
In particular, the class of the shift functor $[m]$
corresponds to $(1,m)$. The second factor is central
in $\DPic_\k(H)$.
\end{prp}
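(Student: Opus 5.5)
The plan is to prove the two isomorphisms in \eqref{eq:mesh} separately. The first is a standardness statement for the transjective component. The second is essentially the restriction theorem of Miyachi--Yekutieli \cite{MY}, made explicit through the construction of $G_0$.

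\emph{The mesh equivalence.} Since $H=\k Q$ is hereditary of infinite representation type, the component $\calT$ of $\Db(\mod H)$ containing the indecomposable projectives has no $\ta$-periodic vertices. By the description of $\Db(\mod H)$ in \cite{Hap}, $\calT\cong\bbZ Q^{\mathrm{op}}$ (or $\bbZ Q$, depending on the arrow convention), and the projectives $P_x=H(\blank,x)$ form a section. I will define a functor $\calM_Q\to\calB$ as follows.
\begin{enumerate}
\item Send the vertex $(i,x)$ to the representative of $\ta^{-i}P_x$.
\item Send the arrows on the section to the maps $P_x\to P_y$ given by the arrows of $Q$.
\item Extend to all arrows inductively in both directions along $\ta^{\pm1}$. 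At each step, choose the new irreducible maps so that the sum over the mesh is the middle term of an Auslander--Reiten triangle. This is possible because the cone of a sink map is determined up to isomorphism.
\end{enumerate}
The mesh relations then hold, so the functor factors through $I_{\mathrm{mesh}}$. It is bijective on objects up to isomorphism. For full faithfulness I would use directedness of $\calT$. Every morphism between indecomposables of $\calT$ is a sum of composites of irreducible maps, because the radical powers vanish along finite paths in a directed component. Moreover, $\dim\Hom$ in both categories is computed by the same additive (knitting) function starting at a vertex. So the induced map on morphism spaces is a surjection between spaces of equal finite dimension, and hence an isomorphism.

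\emph{The decomposition of $\DPic_\k(H)$.} A triangle autoequivalence $F$ permutes Auslander--Reiten components and commutes with $[1]$ up to isomorphism. It also preserves the class of components containing no $\ta$-periodic objects, which are exactly the transjective components $\calT[k]$, and these are pairwise distinct. Hence $F(\calT)=\calT[m]$ for a unique integer $m$. Then $G=[-m]F$ preserves $\calT$, and $G_0$ is well defined up to natural isomorphism: changing the $\th_X$ or the representatives conjugates $G_0$ by a natural isomorphism. Since $[1]$ commutes with every standard functor up to isomorphism, $F\mapsto(\operatorname{cl}(G_0),m)$ is a group homomorphism and $\bbZ$ is central.

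For injectivity, suppose $G_0\cong\mathrm{Id}$ and $m=0$. Then $F$ restricted to $\operatorname{add}H\subseteq\calB$ is naturally isomorphic to the inclusion. By \cite{MY}, the two-sided tilting complex of $F$ is then isomorphic to the bimodule ${}_1H_\si$ for an automorphism $\si$ inner up to the chosen isomorphisms, so $F\cong\mathrm{Id}$. For surjectivity, take an autoequivalence $\Psi$ of $\calB\simeq\calM_Q$. The image under $\Psi$ of the section $\{P_x\}$ is again a section $\calE$ of $\calT$, and $\calE$ is a tilting spectroid whose endomorphism category is $\Psi(H)\simeq H$. Rickard's theorem \cite{Ric91} then yields a standard autoequivalence sending $H$ to $\calE$ and inducing $\Psi$ on $\operatorname{add}H$. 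A standard functor is determined on $\calB$ by its action on $\operatorname{add}H$ together with the mesh structure, so it induces $\Psi$ on all of $\calB$. Composing with $[m]$ gives every pair.

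I expect the main obstacle to be injectivity, namely that agreement on morphisms of $\calB$ forces a natural isomorphism of \emph{standard} functors. Here I would invoke the Miyachi--Yekutieli restriction theorem directly rather than reprove it.
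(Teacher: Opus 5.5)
The paper does not reprove this proposition; it quotes both isomorphisms from \cite[Theorems~2.6 and~3.7]{MY}. Your reproof of the mesh equivalence and of the homomorphism $F\mapsto(\operatorname{cl}(G_0),m)$ is adequate. However, the second isomorphism has a genuine gap, and it is not where you locate it.

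Injectivity is the easy direction. If $m=0$ and $G_0\cong\mathrm{Id}$, then $F$ restricted to the indecomposable projectives is naturally isomorphic to the inclusion. The argument of \lemref{lem:standard-identity} then gives $F\cong\mathrm{Id}$ directly: the tilting complex has cohomology only in degree $0$, and naturality transports the left action.

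The unproved step is the last sentence of your surjectivity argument. After Rickard you have a standard $F$ with $F|_{\operatorname{add}H}\cong\Psi|_{\operatorname{add}H}$, and you need $F|_\calB\cong\Psi$. Two \emph{standard} functors that agree on $\operatorname{add}H$ are isomorphic. But $\Psi$ is an arbitrary autoequivalence of $\calB$, not yet known to be the restriction of anything; that is exactly what is being proved. What you need is a rigidity statement: an autoequivalence of $\calB$ whose restriction to a section is naturally isomorphic to the inclusion is naturally isomorphic to $\mathrm{Id}_\calB$. ``Together with the mesh structure'' does not establish this. It is also the direction the paper relies on later, when it lifts $U$ to $\widetilde U$ in the proof of \thmref{thm:hereditary}.

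The rigidity can be supplied as follows. Take an Auslander--Reiten triangle $\ta Z\to E\to Z\to\ta Z[1]$ with $Z\in\calT$. For all $W\in\calB$ one has $\Hom(\ta Z[1],W)=0=\Hom(W,Z[-1])$. Hence, in $\operatorname{add}\calB$, the map $E\to Z$ is a cokernel of $\ta Z\to E$, and $\ta Z\to E$ is a kernel of $E\to Z$. An equivalence preserves kernels and cokernels. So you can normalize the autoequivalence one vertex at a time, moving away from the section in both directions in an order compatible with paths. The key point is that every morphism from an already normalized object to $Z$ factors through the sink map of $Z$, and dually for source maps. The same argument shows that $\Psi$ commutes with $\ta$ on objects, which you use implicitly when you assert that $\Psi$ sends the section to a section.

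A smaller error concerns the characterization of transjective components. Components without $\ta$-periodic vertices coincide with the transjective components only in tame type. For wild $H$, which the statement includes, the regular components are of type $\bbZ A_\infty$ and are not periodic either. Characterize the $\calT[k]$ instead as the components with finitely many $\ta$-orbits, namely $|Q_0|$ of them; this property is preserved by triangle equivalences.
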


\begin{proof}
Apply \cite[Theorems~2.6 and~3.7]{MY}.
The first assertion is an equivalence with the full subcategory
$\calB$, including all its morphisms, not only irreducible morphisms.
\end{proof}

\begingroup
\begin{rmk}[Outer automorphism groups]\label{rmk:outer-groups}
{
$\Out(B)$ is the outer automorphism group of $B$, and
$\Out^0(B)$ is its identity component as an algebraic group.
Derived equivalence preserves this identity component; see
\cite[Theorem~4.6 and Section~4.2.3]{Rouquier}.
More explicitly, a standard derived equivalence
$F:\Db(\mod B)\to\Db(\mod H)$ identifies
$\Out^0(B)$ with $\Out^0(H)$ by conjugation, as algebraic groups;
see \cite[Theorem~1]{Yek}.
The Picard group $\Pic(B)$ is the group of isomorphism classes of
invertible $B$-$B$-bimodules, with multiplication induced by tensor
product. For a basic algebra, we identify $\Pic(B)$ with $\Out(B)$.
The quotient $\Out(B)/\Out^0(B)$ is finite.
}

{
{Here $\Out(B)=\Out_\k(B)=\Aut_\k(B)/\Inn(B)$, where} $\Inn({B})$ denotes the group of inner automorphisms
of the algebra ${B}$. {In this formula, $\Aut_\k(B)$ denotes algebra automorphisms.
Appendix~\ref{app:picard-basic} explains its identification, modulo inner automorphisms, with
natural-isomorphism classes of $\k$-linear category automorphisms of $B$.}
{Connectedness} is taken in the Zariski topology.

We regard $\Out_\k({B})$ as a subgroup of $\DPic_\k({B})$
by sending each outer automorphism class to the class
of its induced standard derived autoequivalence.
This assignment is well-defined because every inner
automorphism induces a functor naturally isomorphic
to the identity functor.
Conversely, an automorphism inducing such a functor
is inner, so the resulting homomorphism is injective.
By \cite[Theorem~1]{Yek}, the subgroup $\Out^0({B})$
is normal in $\DPic_\k({B})$.
}
\end{rmk}
\endgroup
}

\section{The classification in tree type}\label{sec:tree}

\begingroup
Let $B$ be piecewise hereditary of type $\De$, where $\De$ is
$\widetilde D_r$ or one of the three $\widetilde E$ types. Fix
$\ph\in\Aut(B)$ and $n\geq1$. The underlying graph of $\De$ is a tree. 

Put
\[
 \Aut_0(\De)=
 \{\si\in\Aut(\De)\mid
       \si\text{ fixes a vertex of }\De\}.
\]
This is a subgroup of $\Aut(\De)$.
An orientation $Q_\De$ of $\De$ is called
\emph{admissible} if
\[
 \Aut(Q_\De)=\Aut_0(\De),
\]
where we regard each quiver automorphism as an
automorphism of the underlying tree.
Choose such an orientation by \cite[Lemma~4.1]{Asa},
and put $H_\De=\k Q_\De$.
The hereditary reduction in \cite[Lemma~5.4 and Proposition~5.5]{Asa}
gives
\[
 T_\ph^n(B)\dereq T_{\k\si}^n(H_\De)
\]
for some $\si\in\Aut(Q_\De)$.
Here $\k\si$ is the automorphism of the path category that sends each
vertex and each arrow to its image under $\si$.
\endgroup

The Main Theorem of \cite{Asa} says that the complete invariant is
\begin{equation}\label{eq:treetype}
 (\De,n,\ovl\si).
\end{equation}
Here $\ovl\si$ denotes the conjugacy class of $\si$
in $\Aut(\De)$, the automorphism group of the
unoriented tree.
{For each fixed $\De$ and positive integer $n$,} the set of all possible
third components is exactly
\[
 \{\ovl\si\mid\si\in\Aut_0(\De)\}.
\]
Indeed, the hereditary reduction above shows that the
third component of every extension considered here
belongs to this set.
Conversely, every $\si\in\Aut_0(\De)=\Aut(Q_\De)$
defines an automorphism $\k\si$ of $H_\De$, and
\[
 T_{\k\si}^n(H_\De)
\]
has type $(\De,n,\ovl\si)$.
Thus every class in the displayed set occurs.
There is no further connected scalar parameter. Indeed
$\Out^0(H_\De)=1$ by \cite[Proposition~1.7]{MY}. Under
\eqref{eq:mesh}, the realizable conjugacy classes of derived
autoequivalences are the classes in \eqref{eq:treetype}: restriction
gives the action on the repetition quiver, and passage to its
$\ta$-orbits gives $\si$. 
Conjugacy here is taken in $\Aut(\De)$.
{More explicitly, for all $\si,\si'\in\Aut(Q_\De)$,}
\[
 \ovl\si=\ovl{\si'}
 \quad\Longleftrightarrow\quad
 \si'=\rho\si\rho^{-1}
 \quad\text{for some }\rho\in\Aut(\De).
\]
The automorphism $\rho$ need not preserve the
orientation of $Q_\De$.
This case is handled by the equivariant tilting
construction in the proof of the Main Theorem of
\cite{Asa}, which allows a change of hereditary
orientation.
This proves \thmref{thm:main} for the tree types. Moreover, by
\cite[Proposition~8.1]{Asa}, if two twisted extensions are stably
equivalent and one has tree type, then the other has the same underlying
tree type. Hence it remains to consider the types $\Aaff pq$.

\section{Recovering the affine twist from a full component}\label{sec:mesh}

Fix $H=\k Q$ of type $\Aaff pq$, $\ph\in\Aut(H)$, and $n\geq1$.
Write $E=T_\ph^n(H)$ and $g=\widehat\ph\nu_H^n$.
Let $\pi:\widehat H\to E$ be the canonical covering functor.
The pull-up functor
\[
 \pi\up:\Mod E\longrightarrow\Mod\widehat H
\]
is the restriction functor along $\pi$.
Thus $\pi\up N=N\circ\pi$ for each right $E$-module $N$.
Its left adjoint is the push-down functor
\[
 \pi\down=\blank\otimes_{\widehat H}E:
 \Mod\widehat H\longrightarrow\Mod E,
\]
where $E$ is regarded as a $\widehat H$-$E$-bimodule
via $\pi$.
For each object $x$ of $\widehat H$, we have
\[
 \pi\down\bigl(\widehat H(\blank,x)\bigr)
   \mathrel{{\cong}} E(\blank,\pi x).
\]
Since $\pi\down$ preserves direct sums and direct summands,
it sends projective modules to projective modules.
Moreover, every finitely generated module is a quotient
of a finite direct sum of representable modules.
Since $\pi\down$ is right exact, it also preserves
finitely generated modules.
Thus it restricts to a functor
\[
 \pi\down:\mod\widehat H\longrightarrow\mod E.
\]
By the covering property, $E$, regarded as a left
$\widehat H$-module via $\pi$, is projective and hence flat.
Therefore $\pi\down$ is exact.
It follows that $\pi\down$ induces a triangle functor
\[
 \pi_*:\umod\widehat H\longrightarrow\umod E,
\]
which we call the \emph{stable pushdown functor}.

\begingroup
Let $R:\Db(\mod H)\to\umod\widehat H$
be Happel's equivalence, chosen so that its restriction
to $\mod H$ is the canonical embedding.
We use the composite functor
\[
 P=\pi_*R:\Db(\mod H)\longrightarrow\umod E.
\]
Write $\Ph$ for the derived autoequivalence induced by
$\ph$, with the convention fixed in Section~1.
Let $\widehat{\Ph}$ and $\Ga$ be the autoequivalences
of $\umod\widehat H$ induced by $\widehat\ph$ and $g$,
respectively.
Our next aim is to identify the autoequivalence
corresponding to $\Ga$ under $R$.
We first prove the following lemma.

\begin{lem}\label{lem:standard-identity}
Let $\calP$ be the full subcategory of $\Db(\mod H)$
consisting of finitely generated projective $H$-modules
regarded as complexes concentrated in degree zero,
and let $\iota:\calP\to\Db(\mod H)$ be the inclusion.
Let $F$ be a standard autoequivalence of $\Db(\mod H)$.
If there is a natural isomorphism
$\eta:F\iota\Longrightarrow\iota$, then
$F\mathrel{{\cong}}\mathrm{Id}$.
\end{lem}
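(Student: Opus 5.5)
We prove the lemma by passing to the two-sided tilting complex and identifying it with the regular bimodule. Since $F$ is standard, choose a two-sided tilting complex $X$ of $H$-$H$-bimodules with $F\cong-\ox_H^{\mathbf L}X$ as triangle functors. The aim is to show that $X\cong H$ in $\Db(\mod H^{\mathrm{op}}\ox_\k H)$. Then $F\cong-\ox_H^{\mathbf L}H\cong\mathrm{Id}$.

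\emph{Step 1: $X$ is a one-sided module.} Evaluate $\eta$ at the regular module $H_H\in\calP$. This gives an isomorphism $X\cong H\ox_H^{\mathbf L}X=F(H)\cong H$ in $\Db(\mod H)$ as right modules. Hence the cohomology of $X$ is concentrated in degree zero. Consequently $X$ is isomorphic, in the derived category of bimodules, to the bimodule $M=H^0(X)$, and $M_H\cong H_H$.

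\emph{Step 2: $M$ is an invertible bimodule given by an automorphism.} Since $F$ is an equivalence and $X\cong M$, the bimodule $M$ is invertible; this is the standard fact that a two-sided tilting complex concentrated in one degree is an invertible bimodule. Fix a right-module isomorphism $u:H\to M$. The left action of $H$ on $M$ transported along $u$ gives an algebra endomorphism $\al$ of $H$ with $M\cong{}_{\al}H_1$, and $\al$ is an automorphism because $M$ is invertible. Thus $F$ is naturally isomorphic to the autoequivalence induced by $\al$, namely $-\ox_H{}_{\al}H$, which is restriction of scalars along $\al^{-1}$ up to convention.

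\emph{Step 3: $\al$ is inner.} Use naturality of $\eta$ on $\calP$. The functor $\iota$ sees $\End(H_H)\cong H$, acting by left multiplication. Naturality of $\eta$ with respect to the endomorphisms of $H_H$ says that the component $\eta_H:F(H)\to H$ intertwines the induced action of $H$ on $F(H)=M$, namely the $\al$-twisted left action, with the ordinary left action of $H$ on $H$. Thus $\eta_H\circ u:H\to H$ is a right-module automorphism, hence left multiplication by a unit $c\in H$, and it satisfies $c\,\al(a)=a\,c$ for all $a\in H$ (or the same relation with $\al$ replaced by its inverse, depending on conventions). Hence $\al$ is inner. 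By the injectivity statement in \rmkref{rmk:outer-groups}, an inner automorphism induces a functor naturally isomorphic to the identity. Therefore $F\cong\mathrm{Id}$.

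The main obstacle is Step 3, namely bookkeeping which side $\al$ acts on and checking that only the component at $H_H$, together with naturality over $\End(H_H)$, is needed. The rest of $\calP$ consists of direct sums of summands of $H_H$ and is automatic. A cleaner alternative is to note that $\eta_H$ is an isomorphism of $\End(H_H)$-modules, so that it is already an isomorphism of bimodules $M\cong H$. Bimodules here are objects of $\Db$ of bimodules, and $H^0$ detects $X$ because $X$ is concentrated in degree zero. This yields $X\cong H$ as bimodules directly.
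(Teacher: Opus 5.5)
Your proposal is correct and is essentially the paper's argument. The paper also uses the component at $H$ to get $T\cong H^0(T)$, and then applies naturality of $\eta$ and of the chosen isomorphism $\xi:-\ox_H^{\mathbf L}T\To F$ with respect to the left multiplications $\ell_a\in\End(H_H)$ to show that $H^0(\eta_H\xi_H)$ is a bimodule isomorphism; this is exactly your ``cleaner alternative'', and your detour through showing that $\al$ is inner is an equivalent repackaging.

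Two small points of bookkeeping. Identifying $F(\ell_a)$ with left multiplication on $X$ requires the naturality of $\xi$; it does not follow from writing $F(H)=M$. Also, the fact you need from \rmkref{rmk:outer-groups} is its well-definedness statement (inner automorphisms induce functors isomorphic to the identity), not its injectivity statement.
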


\begin{proof}
Choose a two-sided tilting complex $T$ and a natural
isomorphism
\[
 \xi:\blank\otimes_H^{\mathbf L}T\Longrightarrow F.
\]
The components of $\et$ and $\xi$ at $H$ give isomorphisms
\[
 T\xrightarrow{\xi_H}F(H)\xrightarrow{\eta_H}H
\]
in $\Db(\mod H)$.
Hence $H^i(T)=0$ {for every $i\in\bbZ\setminus\{0\}$,} and therefore
$T\mathrel{{\cong}} H^0(T)$ in the derived category of
$H$-$H$-bimodules.
Set
\[
 \theta=H^0(\eta_H\xi_H):H^0(T)\longrightarrow H.
\]
This is an isomorphism of right $H$-modules.
We show that it is also left $H$-linear.

{For each $a\in H$,} let $\ell_a:H\to H$ be left
multiplication by $a$.
The morphism $\ell_a$ is right $H$-linear, and
$\ell_a\otimes_H^{\mathbf L}T$ is left multiplication
by $a$ on $T$.
The naturality of $\xi$ and $\eta$ therefore gives
\[
\begin{tikzcd}[column sep=large,row sep=large]
 H^0(T) \arrow[r,"a\cdot\blank"]
        \arrow[d,"\theta"'] &
 H^0(T) \arrow[d,"\theta"] \\
 H \arrow[r,"\ell_a"'] & H.
\end{tikzcd}
\]
Thus $\theta(at)=a\theta(t)$ for all $a\in H$
and $t\in H^0(T)$.
Consequently, $\theta$ is an isomorphism of
$H$-$H$-bimodules, and $T\mathrel{{\cong}} H$ in their
derived category.
It follows that
\[
 F\mathrel{{\cong}}\blank\otimes_H^{\mathbf L}T
  \mathrel{{\cong}}\blank\otimes_H^{\mathbf L}H
  \mathrel{{\cong}}\mathrm{Id}.
\]
\end{proof}

\begin{lem}\label{lem:Happel-actions}
The following statements hold.
\begin{enumerate}
\item\label{item:actions-standard} There is a natural isomorphism
$R\Ph\mathrel{{\cong}}\widehat{\Ph}R$.
Thus the following diagram commutes up to natural
isomorphism:
\[
\begin{tikzcd}[column sep=large,row sep=large]
 \Db(\mod H) \arrow[r,"R"] \arrow[d,"\Ph"'] &
 \umod\widehat H \arrow[d,"\widehat{\Ph}"] \\
 \Db(\mod H) \arrow[r,"R"'] & \umod\widehat H.
\end{tikzcd}
\]
\item\label{item:actions-generator} Set
\begin{equation}\label{eq:generator}
 G=\Ph\ta^n[2n].
\end{equation}
Then there is a natural isomorphism
$RG\mathrel{{\cong}}\Ga R$.
\end{enumerate}
\end{lem}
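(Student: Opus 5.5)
For part (1), I will compare the two functors $R\Ph$ and $\widehat{\Ph}R$ on projective $H$-modules and then propagate the comparison to all of $\Db(\mod H)$. The propagation cannot simply be done by hand, so I will reduce it to \lemref{lem:standard-identity}. Put $F=R^-\widehat{\Ph}^{-}R\Ph$, an autoequivalence of $\Db(\mod H)$.

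First I check that $F$ is standard. The functor $\Ph$ is standard, being given by the twisted bimodule ${}_1H_{\ph}$ or its inverse. For the other factor, $R^-\widehat{\Ph}R$ is the derived autoequivalence induced by $\widehat\ph$ through Happel's equivalence. Since $\widehat\ph$ preserves the layer $H^{[0]}$ and restricts there to $\ph$, I will identify it with $\Ph$ itself. This identification uses the explicit Happel construction: $R$ sends $X\in\mod H$ to $X$ viewed as a $\widehat H$-module supported on layer $0$. Hence $R\Ph$ and $\widehat{\Ph}R$ agree on $\mod H$ up to a canonical isomorphism. On a module $X$, $\widehat{\Ph}$ acts by restriction of scalars along $\widehat\ph^{-1}$, and this restricts on layer $0$ to twisting by $\ph^{-1}$, which matches the convention $\Ph H(\blank,x)\cong H(\blank,\ph x)$.

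This gives a natural isomorphism $F\iota\cong\iota$ on $\calP$, indeed on all of $\mod H$. The main obstacle is standardness: \lemref{lem:standard-identity} needs $F$ standard, and $R$ is not a priori given by a tilting complex. I would handle this as follows. An autoequivalence of $\Db(\mod H)$ that agrees with a standard one on $\mod H$ (the heart) compatibly with the triangulated structure is isomorphic to it. Because $H$ is hereditary, every object is a direct sum of shifted modules and morphisms are determined by $\Hom$ and $\Ext^1$ in $\mod H$. So the isomorphism on $\mod H$, together with compatibility with $[1]$ and with Yoneda $\Ext^1$ (both preserved by triangle functors), extends to all of $\Db(\mod H)$. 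I would use this hereditary extension argument directly and avoid standardness altogether where possible, invoking \lemref{lem:standard-identity} only when $F$ is known to be standard.

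For part (2), I write $\Ga=\widehat{\Ph}\circ N_{\widehat H}^n$, since $g=\widehat\ph\nu_H^n$ and $\nu_H$ induces the Nakayama functor $N_{\widehat H}$ on the stable category. By part (1), $\widehat{\Ph}R\cong R\Ph$. Iterating the square $E_H\circ\ta[2]\cong N_{\widehat H}\circ E_H$ recalled in Section~2 gives $N^n_{\widehat H}R\cong R\ta^n[2n]$. Composing the two,
\[
 \Ga R\cong\widehat{\Ph}N_{\widehat H}^nR\cong\widehat{\Ph}R\,\ta^n[2n]\cong R\,\Ph\ta^n[2n]=RG.
\]
Here I must make sure that the autoequivalence induced by $\nu_H$ is the one denoted $N_{\widehat H}$ in that square, and that the variance convention of $\widehat{\Ph}$ matches that of $\Ph$. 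Both are bookkeeping checks on the definitions in Section~2.
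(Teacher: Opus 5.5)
Your proposal is correct in outline. Part~(2) is exactly the paper's argument, but for part~(1) you take a genuinely different route at the crucial step.

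The paper forms $F=\Ph^-R^-\widehat{\Ph}R$ and uses the agreement of $R\Ph$ and $\widehat{\Ph}R$ on $\mod H$ (Lemma~2.7 of the tree-type paper) only on projectives. It then quotes Miyachi--Yekutieli's Theorem~1.8, that every derived autoequivalence of a hereditary algebra is standard. It concludes with \lemref{lem:standard-identity}, which upgrades $F(H)\cong H$ to a bimodule isomorphism $T\cong H$.

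You avoid standardness altogether and extend the isomorphism from the heart by hand, so \lemref{lem:standard-identity} becomes superfluous. Your opening attempt at standardness should be dropped: ``identifying'' $R^-\widehat{\Ph}R$ with $\Ph$ is the statement being proved, so it cannot serve as evidence that $F$ is standard.

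The step you only gesture at, ``compatibility with Yoneda $\Ext^1$,'' deserves an actual argument. It holds for a concrete reason. Let $\eta\colon F|_{\mod H}\cong\mathrm{incl}$ and $\omega\colon F[1]\cong[1]F$, and realize $\xi\colon M\to N[1]$ by a triangle $N\xrightarrow{f}E\xrightarrow{g}M\xrightarrow{\xi}N[1]$ with $E$ a module. Transporting its image along $\eta$ gives a triangle with the same $f,g$ and third map $\xi'=\eta_N[1]\,\omega_N\,F(\xi)\,\eta_M^{-1}$. TR3 then yields $h$ with $hg=g$ and $\xi'h=\xi$. Hence $h-1$ factors through $\Hom_{\Db(\mod H)}(N[1],M)=0$, so $h=1$ and $\xi'=\xi$.

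After that, define the isomorphism on $M[i]$ as $\eta_M[i]$ composed with the iterate of $\omega$. Transport it along chosen, non-functorial decompositions $X\cong\bigoplus_iH^i(X)[-i]$, and check naturality entrywise on shifted module maps and shifted $\Ext^1$-classes.

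Your route is self-contained and avoids both the external standardness theorem and two-sided tilting complexes. The price is this bookkeeping and the need for agreement on all of $\mod H$ rather than only on projectives, which the explicit form of $R$ provides. The paper's route is shorter but relies on the deeper cited theorem.
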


\begin{proof}
{(\ref{item:actions-standard})} Let $R^-$ and $\Ph^-$ be quasi-inverses of
$R$ and $\Ph$, respectively, and set
\[
 F=\Ph^-R^-\widehat{\Ph}R.
\]
By \cite[Lemma~2.7]{Asa}, the restrictions of
$\widehat{\Ph}R$ and $R\Ph$ to $\mod H$
are naturally isomorphic.
Thus we obtain natural isomorphisms
\[
 \begin{aligned}
 F\iota
 &=\Ph^-R^-\widehat{\Ph}R\iota\\
 &\mathrel{{\cong}}\Ph^-R^-R\Ph\iota\\
 &\mathrel{{\cong}}\Ph^-\Ph\iota\\
 &\mathrel{{\cong}}\iota.
 \end{aligned}
\]
Their composite gives a natural isomorphism
$\et:F\iota\Longrightarrow\iota$.
Thus the hypothesis concerning $\et$ in
\lemref{lem:standard-identity} is satisfied.
By \cite[Theorem~1.8]{MY}, $F$ is standard.
Applying \lemref{lem:standard-identity}, we obtain
$F\mathrel{{\cong}}\mathrm{Id}$, and hence
$R\Ph\mathrel{{\cong}}\widehat{\Ph}R$.

{(\ref{item:actions-generator})} Let $N_{\widehat H}$ be the stable Nakayama
functor.
As recalled after \eqref{eq:Happel}, there is a
natural isomorphism
\[
 R\ta[2]\mathrel{{\cong}} N_{\widehat H}R.
\]
Since $g=\widehat\ph\nu_H^n$, its induced functor
satisfies $\Ga\mathrel{{\cong}}\widehat{\Ph}N_{\widehat H}^n$.
Using {(\ref{item:actions-standard})}, we obtain
\[
 \begin{aligned}
 RG
 &=R\Ph\ta^n[2n]\\
 &\mathrel{{\cong}}\widehat{\Ph}R\ta^n[2n]\\
 &\mathrel{{\cong}}\widehat{\Ph}N_{\widehat H}^nR\\
 &\mathrel{{\cong}}\Ga R.
 \end{aligned}
\]
This proves {(\ref{item:actions-generator})}.
\end{proof}

We now recall the full subcategory
$\calB\subseteq\Db(\mod H)$ of \prpref{prop:MY}.
It has one representative of each isomorphism class
of indecomposable objects in the transjective component
containing the projective modules.
These objects are the preprojective modules in degree
zero and the preinjective modules shifted by $[-1]$.
Both $\ta$ and $\Ph$ preserve this component.
Their restrictions to the chosen representatives are
understood up to natural isomorphism.
By \eqref{eq:generator}, $G$ sends this component to
its shift by $[2n]$.
\endgroup

{In formulas involving actual functors, negative powers of $G$, $\Ph$, and $\ta$ denote iterates of chosen quasi-inverses.}

\begin{lem}\label{lem:vanish}
For any $X,Y\in\calB$ and any nonzero integer $j$,
we have
\[
{ \Db(\mod H)(G^jX,Y)}=0.
\]
\end{lem}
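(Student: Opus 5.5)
The plan is to reduce the statement to a degree count in the hereditary category $\Db(\mod H)$. Two standard facts do the work: every indecomposable object of $\Db(\mod H)$ is of the form $M[k]$ with $M$ an indecomposable $H$-module, and for modules $A,B$ and integers $a,b$ we have $\Db(\mod H)(A[a],B[b])\cong\Ext_H^{b-a}(A,B)$. Since $H$ is hereditary, this vanishes unless $b-a\in\{0,1\}$.

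First I locate $G^jX$. As recalled before the statement, $\ta$ and $\Ph$ preserve the transjective component $\calT$ containing the projectives, and the shift is central up to natural isomorphism. Hence
\[
 G^jX\cong \Ph^j\ta^{nj}X[2nj]\in\calT[2nj].
\]
Every object of $\calT$ is either a preprojective module $M$ in degree zero, or $I[-1]$ with $I$ preinjective. So $G^jX$ is isomorphic either to $M[2nj]$ with $M$ preprojective, or to $I[2nj-1]$ with $I$ preinjective. Likewise $Y$ is either $N$ with $N$ preprojective, or $J[-1]$ with $J$ preinjective. Write $G^jX\cong A[a]$ with $a\in\{2nj,2nj-1\}$, and $Y\cong B[b]$ with $b\in\{0,-1\}$.

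Next I split by the sign of $j$.

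If $j\geq1$, then $a\geq 2n-1\geq1$ while $b\leq0$. Thus $b-a<0$ and the Hom space vanishes.

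If $j\leq-1$, then $a\leq -2n\leq-2$ while $b\geq-1$. Thus $b-a\geq1$, with equality only when $a=-2$ and $b=-1$. This forces $n=1$, $j=-1$, $G^{-1}X\cong M'[-2]$ with $M'$ a preprojective module, and $Y\cong J[-1]$ with $J$ preinjective. The Hom space is then $\Ext^1_H(M',J)\cong D\Hom_H(J,\ta M')$ by Auslander--Reiten duality. Here $\ta M'$ is zero or preprojective. Since $H$ is connected of infinite representation type, there are no nonzero maps from preinjective to preprojective modules, so this space also vanishes.

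The main point needing care is the bookkeeping of which members of $\calT$ are modules and which are shifted preinjectives, and hence the exact range of the shifts $a$ and $b$. The only delicate case is the boundary one, $n=1$ and $j=-1$, where the degree count alone does not give vanishing. There I will invoke Auslander--Reiten duality and the vanishing of $\Hom(\text{preinjective},\text{preprojective})$, which is where infinite representation type is used.
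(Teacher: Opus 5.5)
Your proof is correct and is essentially the paper's argument: you make the same reduction to $\Ext^{b-a}_H$ between modules lying in the transjective components, the same sign analysis in $j$, and the same treatment of the single boundary case $n=1$, $j=-1$ (preprojective to preinjective) via Auslander--Reiten duality and the absence of maps from preinjectives to preprojectives. The differences are only cosmetic: you absorb the shift $2nj$ into $a$, and you use $\Ext^1_H(M',J)\cong D\Hom_H(J,\ta M')$ directly, which holds because $H$ is hereditary and thereby covers the projective subcase that the paper treats separately.
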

\begin{proof}
\begingroup
Because $\Ph$ and $\ta$ preserve the chosen
transjective component, we can write
$\Ph^j\ta^{nj}X\mathrel{{\cong}} M[a]$ and $Y\mathrel{{\cong}} L[b]$,
where $M,L$ are $H$-modules and $a,b\in\{0,-1\}$.
Thus
\[
{ \Db(\mod H)(G^jX,Y)}
 \mathrel{{\cong}}\Ext_H^d(M,L),\qquad d=b-a-2nj.
\]
The possible degrees give the following reduction:
\[
 \begin{array}{c|c|c}
 \text{case}&\text{degree}&\text{reason for vanishing}\\ \hline
 j>0&d\leq-1&\text{negative degree between modules}\\
 j<0,\ d\geq2&d\geq2&\text{$H$ is hereditary}\\
 j<0,\ d=1&n=1,\ j=-1,\ a=0,\ b=-1
   &\text{considered below}.
 \end{array}
\]
In the last case, $M$ is preprojective and $L$ is preinjective.
If $M$ is projective, then $\Ext_H^1(M,L)=0$.
Otherwise Auslander--Reiten duality and the absence of morphisms from
preinjectives to preprojectives give
\[
 D\Ext_H^1(M,L)\mathrel{{\cong}}\ovl{\Hom}_H(L,\ta M)=0.
\]
Here $\ovl{\Hom}_H$ denotes morphisms modulo those factoring through
injective modules; it is zero because already
$\Hom_H(L,\ta M)=0$.
\endgroup
\end{proof}

\begin{lem}\label{lem:full}
The restriction $P|_\calB$ is an equivalence onto the full subcategory
$\calC_\ph$ on one transjective component of $\umod E$. The stable
Auslander--Reiten quiver of $E$ has exactly $2n$ transjective components,
each isomorphic to $\bbZ Q$. All its other components are
$\ta$-periodic. The functor $\Om$ permutes the transjective
components transitively.
\end{lem}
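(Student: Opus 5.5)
The plan is to work through the stable pushdown functor $\pi_*$ and Happel's equivalence $R$. The basic tool is the covering formula for stable Hom spaces. For indecomposable non-projective $\widehat H$-modules $U,V$, the push-down functor is a Galois covering functor with group $\langle g\rangle$, so
\[
 \uHom_E(\pi_*U,\pi_*V)\cong\Ds_{j\in\bbZ}\uHom_{\widehat H}(\Ga^jU,V),
\]
and $\pi_*U\cong\pi_*V$ if and only if $V\cong\Ga^jU$ for some $j$. The decomposition of module Homs under the covering is standard. To pass to stable Homs, I will check that morphisms factoring through projectives correspond on both sides: $\pi\down$ preserves projectives, and every projective $E$-module is a push-down.

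\emph{Step 1: full faithfulness.} For $X,Y\in\calB$, Lemma~\ref{lem:Happel-actions}(\ref{item:actions-generator}) gives $\Ga^jRX\cong RG^jX$. Since $R$ is an equivalence, the $j$th summand becomes $\Db(\mod H)(G^jX,Y)$. By Lemma~\ref{lem:vanish} this vanishes for $j\neq0$, so $P$ induces $\Db(\mod H)(X,Y)\cong\uHom_E(PX,PY)$ and $P|_\calB$ is fully faithful.

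\emph{Step 2: the components.} First, each $PX$ is indecomposable, since its endomorphism ring is the local ring $\End(X)$. Second, the objects $PX$ are pairwise nonisomorphic. If $PX\cong PY$, then $RY\cong\Ga^jRX=RG^jX$. But $G^j$ shifts the transjective component by $2nj$, so this forces $j=0$ and $X\cong Y$. Third, $\pi_*$ commutes with $\ta$ and preserves almost split triangles, because push-down along a Galois covering with free action preserves AR sequences (Gabriel's covering theory; the group acts freely on the indecomposables of the transjective components by the shift argument). Hence the image of the transjective component $\calT$ is a full AR component isomorphic to $\bbZ Q$, namely the image of $R(\calT)$. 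We define $\calC_\ph$ to be the full subcategory on this component, and $P|_\calB$ is an equivalence onto it.

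\emph{Step 3: counting and $\Om$.} The transjective components of $\Db(\mod H)$ are the shifts $\calT[k]$, $k\in\bbZ$. The group generated by $G$ acts on them by $k\mapsto k+2n$, so their images give exactly $2n$ distinct components $\pi_*R(\calT[k])$ for $0\le k<2n$. Repeating Step 2 for each shift shows that each of them is isomorphic to $\bbZ Q$. Every indecomposable non-projective $E$-module is a push-down, because the covering is Galois with finite-dimensional orbit algebra and $\widehat H$ is locally support-finite (it is of tame, indeed domestic, repetitive type). Hence every other component is the image of a regular tube of some $\Db(\mod H)$-shift. Tubes are $\ta$-periodic, and $\pi_*$ commutes with $\ta$, so these components are $\ta$-periodic. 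Finally, $\Om^{-1}$ corresponds to $[1]$ under $R$ and commutes with $\pi_*$. It therefore sends the $k$th component to the $(k+1)$st modulo $2n$, and this action is transitive.

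The main obstacle is the density claim in Step 3: that every indecomposable $E$-module is a push-down, so that no components other than images arise. I would first try to cite the Dowbor--Skowro\'nski theorem for locally support-finite categories, verifying local support-finiteness of $\widehat H$ for affine $H$ (here $\widehat H$ is of tame domestic type). If that fails, I would fall back on Happel's equivalence combined with the fact that push-down is dense for coverings with torsion-free group acting freely on a locally bounded category of tame type.
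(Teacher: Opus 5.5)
Your proposal is correct and follows essentially the same route as the paper. The shared steps are: the stable covering formula, with the check that projectively factoring morphisms correspond; Lemma~\ref{lem:Happel-actions}(\ref{item:actions-generator}) and Lemma~\ref{lem:vanish} for full faithfulness; density of push-down from local support-finiteness of $\widehat H$; and the action $\calB[t]\mapsto\calB[t+2n]$ of $G$, which gives $2n$ transjective components permuted transitively by $\Om$. The paper simply cites \cite[Lemmas~6.1 and~6.2]{Asa} for what you unpack via Gabriel's covering theory.

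One correction to your justification. $\widehat H$ is locally support-finite for every hereditary $H$ because each indecomposable $\widehat H$-module is supported in at most two consecutive layers, not because $\widehat H$ is tame or domestic. Your fallback principle is false: the $\bbZ$-covering of the Kronecker algebra by the alternating $\mathbb A_\infty^\infty$ quiver is tame, yet its push-down misses the band modules. Your main route never needs that fallback.
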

\begin{proof}
\begingroup
{For all $X,Y\in\Db(\mod H)$,} the covering formula, followed by
Happel's equivalence, gives
\begin{equation}\label{eq:stablecover}
 \begin{aligned}
 \uHom_E(PX,PY)
 &\mathrel{{\cong}}\Ds_{j\in\bbZ}
       \uHom_{\widehat H}(g^jRX,RY)\\
 &\mathrel{{\cong}}\Ds_{j\in\bbZ}
      { \Db(\mod H)(G^jX,Y)}.
 \end{aligned}
\end{equation}
In the first line, $g^j$ denotes the induced action on modules.
\endgroup
We show that the covering isomorphism on module
morphisms induces the first isomorphism in
\eqref{eq:stablecover}.
For $\calA=\widehat H$ or $E$, and for any
$U,V\in\mod\calA$, let $\calI_{\calA}(U,V)$
be the subspace of morphisms that factor through
a finitely generated projective $\calA$-module.
For any $M,N\in\mod\widehat H$, the canonical
covering isomorphism is
\[
 \Psi_{M,N}:
 \Ds_{j\in\bbZ}\Hom_{\widehat H}(g^jM,N)
 \xrightarrow{\ \cong\ }
 \Hom_E(\pi\down M,\pi\down N).
\]
We claim that
\[
 \Psi_{M,N}^{-1}
 \bigl(\calI_E(\pi\down M,\pi\down N)\bigr)
 =\Ds_{j\in\bbZ}\calI_{\widehat H}(g^jM,N).
\]

To prove the inclusion from left to right, take any
$f\in\calI_E(\pi\down M,\pi\down N)$.
Every finitely generated projective $E$-module is
a direct summand of a finite direct sum of modules
$E(\blank,\pi x)$, and
\[
 E(\blank,\pi x)
 \cong\pi\down\bigl(\widehat H(\blank,x)\bigr).
\]
Thus we can choose a finitely generated projective
$\widehat H$-module $Q$ and a factorization
\[
\begin{tikzcd}[column sep=large]
 \pi\down M \arrow[r,"u"]
   \arrow[rr,bend left=35,"f"] &
 \pi\down Q \arrow[r,"v"] & \pi\down N.
\end{tikzcd}
\]
Write
\[
 \Psi_{M,Q}^{-1}(u)=(u_r)_{r\in\bbZ},
 \qquad
 \Psi_{Q,N}^{-1}(v)=(v_s)_{s\in\bbZ},
\]
where
\[
 u_r:g^rM\to Q,
 \qquad v_s:g^sQ\to N.
\]
{Only finitely many of the morphisms $u_r$ and $v_s$ are nonzero.}
By the composition rule for the canonical covering
maps, for each $t\in\bbZ$ we have
\[
 \bigl(\Psi_{M,N}^{-1}(f)\bigr)_t
 =\sum_{r+s=t}v_s\circ g^s(u_r).
\]
{For each pair $(r,s)\in\bbZ^2$ with $r+s=t$,} the corresponding summand
has the factorization
\[
\begin{tikzcd}[column sep=large]
 g^{r+s}M \arrow[r,"g^s(u_r)"]
   \arrow[rr,bend left=35,"v_s\circ g^s(u_r)"] &
 g^sQ \arrow[r,"v_s"] & N.
\end{tikzcd}
\]
Since $g^sQ$ is projective and the sum is finite,
\[
 \bigl(\Psi_{M,N}^{-1}(f)\bigr)_t
 \in\calI_{\widehat H}(g^tM,N){.}
\]
This proves the first inclusion.

Conversely, take any $j\in\bbZ$ and any morphism
$h\in\calI_{\widehat H}(g^jM,N)$.
Choose a factorization
\[
 g^jM\xrightarrow{a}Q\xrightarrow{b}N,
 \qquad h=ba,
\]
with $Q$ finitely generated projective.
Applying $\pi\down$ and using the canonical
isomorphism $\pi\down g^jM\cong\pi\down M$,
we obtain a factorization of its image under
$\Psi_{M,N}$ through the projective module
$\pi\down Q$.
Taking finite sums proves the reverse inclusion.

The claimed equality now allows us to take quotients
by these subspaces, and gives
\[
 \Ds_{j\in\bbZ}\uHom_{\widehat H}(g^jM,N)
 \cong\uHom_E(\pi\down M,\pi\down N).
\]
Applying this to modules representing $RX$ and $RY$
proves the first isomorphism in
\eqref{eq:stablecover}.

For the second isomorphism, \lemref{lem:Happel-actions}{(\ref{item:actions-generator})}
gives a natural isomorphism
$RG^j\cong\Ga^jR$ for each $j\in\bbZ$.
Here $\Ga^j$ is the functor denoted by $g^j$
on stable modules in \eqref{eq:stablecover}.
Since $R$ is fully faithful, for any
$X,Y\in\Db(\mod H)$ and any $j\in\bbZ$, we obtain
\[
 \begin{aligned}
 \uHom_{\widehat H}(g^jRX,RY)
 &\cong\uHom_{\widehat H}(RG^jX,RY)\\
 &\cong{\Db(\mod H)(G^jX,Y).}
 \end{aligned}
\]
Taking the direct sum over all $j\in\bbZ$ proves
the second isomorphism in \eqref{eq:stablecover}.

\begingroup
{For all $X,Y\in\calB$,} \lemref{lem:vanish} leaves only the summand
$j=0$ in \eqref{eq:stablecover}. Thus pushdown induces isomorphisms
\[
{ \calB(X,Y)\xrightarrow{\ \sim\ }\uHom_E(PX,PY).}
\]
This is full faithfulness on all morphisms, including morphisms that
are not irreducible.
\endgroup

{For a right module $M$ over a small $\k$-category $A$, its \emph{support} is the set $\{x\in A_0\mid
M(x)\ne0\}$, where $A_0$ is the object set. The category $A$ is \emph{locally support-finite} if, for each
$x\in A_0$, the union of the supports of indecomposable finite-dimensional $A$-modules $M$ with $M(x)\ne0$ is
finite.}
The repetitive category of a hereditary algebra is locally
support-finite by \cite[Lemma~6.1]{Asa}. Its pushdown is therefore essentially surjective,
and the component calculation in \cite[Lemma~6.2]{Asa} applies to the
entire stable Auslander--Reiten quiver. \begingroup
The transjective components {in $\Db(\mod H)$} are the successive shifts
$\calB[t]$, indexed by $t\in\bbZ$. These are distinct
Auslander--Reiten components, although each has translation quiver
isomorphic to $\bbZ Q$. The action of $G$ is
\[
 \calB[t]\longmapsto\calB[t+2n].
\]
Consequently the {transjective components} {in $\umod E$} are indexed by
$\bbZ/(2n)\bbZ$, not by the common isomorphism type $\bbZ Q$.
The component calculation also identifies the essential image of
$P|_\calB$ with the component indexed by $0$; this proves essential
surjectivity onto $\calC_\ph$. The remaining hereditary affine
components are tubes and remain $\ta$-periodic. Finally,
\[
 \Om:\ t\bmod 2n\longmapsto t-1\bmod 2n,
\]
so $\Om$ acts transitively on these $2n$ components.
\endgroup
\end{proof}

\begin{lem}[The return functor]\label{lem:return}
\begingroup
Let $N_E:\umod E\to\umod E$ be the stable Nakayama functor.
The functors
\[
 N_E^n P|_\calB,\quad P\Ph^-|_\calB
 :\calB\to\umod E
\]
are naturally isomorphic:
\begin{equation}\label{eq:return}
N_E^n P|\calB\ \cong\ P\Ph^-|\calB.
\end{equation}
In particular, $N_E^n$ preserves $\calC_\ph$ up to isomorphism.
Here $\Ph^-$ denotes a chosen quasi-inverse of $\Ph$, as in
Section~1.
\endgroup
\end{lem}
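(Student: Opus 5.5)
The plan is to compare the Nakayama functor of $E$ with that of $\widehat H$ through the stable pushdown, and then use that $P$ identifies $g$-translates. First, the pushdown $\pi\down$ commutes with the Nakayama functors: $\pi\down\nu_{\widehat H}\cong\nu_E\pi\down$ on $\mod\widehat H$. This holds because $\pi$ is a Galois covering. On representables we have $\nu_E E(\blank,\pi x)=DE(\pi x,\blank)\cong\pi\down D\widehat H(x,\blank)$. Both functors are right exact and agree naturally on projectives, so they agree on all finitely generated modules. Passing to stable categories then gives $N_E\pi_*\cong\pi_*N_{\widehat H}$, and hence
\[
 N_E^nP=N_E^n\pi_*R\cong\pi_*N_{\widehat H}^nR .
\]

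Second, I rewrite $N_{\widehat H}^n$ using the twist. By \lemref{lem:Happel-actions}, $\Ga\cong\widehat\Ph N_{\widehat H}^n$, so $N_{\widehat H}^n\cong\widehat\Ph^-\Ga$. From part (\ref{item:actions-standard}) we get $\widehat\Ph^-R\cong R\Ph^-$, since conjugating the natural isomorphism by quasi-inverses gives this. Combining these,
\[
 N_E^nP\cong\pi_*\widehat\Ph^-\Ga R\cong \pi_*R\Ph^-\,\bigl(R^-\Ga R\bigr)\cong P\Ph^-G'
\]
for a suitable rearrangement. More simply, I write $N_{\widehat H}^nR\cong \widehat\Ph^-\Ga R\cong \widehat\Ph^- R G\cong R\Ph^-G$ by part (\ref{item:actions-generator}).

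Third, I need a natural isomorphism $\pi_*\Ga\cong\pi_*$ on $\umod\widehat H$. This is the standard covering isomorphism $\pi\down g\cong\pi\down$, induced by $\pi g=\pi$; it is natural on module categories and hence descends to stable categories. Composing gives
\[
 N_E^nP\cong \pi_*\widehat\Ph^-\Ga R\cong \pi_*\Ga\widehat\Ph^-R\cong\pi_*\widehat\Ph^-R\cong P\Ph^-,
\]
where I used that $\widehat\ph$ commutes with $g$. Indeed $\widehat\ph$ commutes strictly with $\nu_H$ and with itself, so the induced functors commute up to natural isomorphism. Restricting to $\calB$ gives \eqref{eq:return}. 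The final assertion then follows because $\Ph^-$ preserves the transjective component containing the projectives. Thus $P\Ph^-X$ lies in $\calC_\ph$ for $X\in\calB$, by \lemref{lem:full}.

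The main obstacle is the first step: making the commutation $\pi\down\nu_{\widehat H}\cong\nu_E\pi\down$ genuinely natural, compatible with the chosen identification of $\Ga$, and descended to the stable categories. I would handle it at the level of bimodules. I would write $\nu=\blank\otimes D(\text{algebra})$ and check that $D\widehat H\otimes_{\widehat H}E\cong \widehat H\otimes_{\widehat H} DE$ as $\widehat H$-$E$-bimodules, using local boundedness of the covering. The stable versions then follow because all functors involved are exact and preserve projectives.
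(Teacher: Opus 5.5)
Your proposal is correct and follows essentially the same route as the paper. The ingredients are the same: the compatibility $N_E\pi_*\cong\pi_*N_{\widehat H}$, obtained from the repetitive duality summed over the $g$-orbits; the orbit isomorphism $\pi_*\Ga\cong\pi_*$; and the commutation of the standard lift $\widehat\ph$ with $\nu_H$.

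The only difference is where you commute. You move $\widehat\Ph^-$ past $\Ga$ on the repetitive side. The paper instead moves $\Ph$ past $\ta^n[2n]$ on the derived side to obtain $N_E^nP\Ph\cong PG\cong P$. Your intermediate detour through ``$P\Ph^-G'$'' is unnecessary, because your final chain already completes the argument.
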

\begin{proof}
We first check that pushdown commutes with the Nakayama functors by a
natural isomorphism. For this purpose we use the duality on morphism
spaces, as follows. For arbitrary objects $x,y$ of $\widehat H$, the repetitive duality
\[
 D\widehat H(x,y)\mathrel{{\cong}}\widehat H(y,\nu_Hx)
\]
is compatible with $\nu_H$ and with the standard lift $\widehat\ph$;
{on the morphism spaces $DH(y,x)$ between
adjacent layers, the latter uses the inverse dual action.} Summing this
{duality over all powers $g^j$, $j\in\bbZ$, identifies the descended $\nu_H$ with the}
Nakayama functor of the orbit category. More explicitly, reindexing
the finite sums uses
\[
{ D\Ds_{j\in\bbZ}\widehat H(x,g^jy)
 \mathrel{{\cong}}\Ds_{j\in\bbZ}\widehat H(g^jy,\nu_Hx)
 \mathrel{{\cong}}\Ds_{j\in\bbZ}\widehat H(y,g^{-j}\nu_Hx).}
\]
\begingroup
The identification is natural on projectives and extends to modules
by projective presentations. In terms of the two separate functors
introduced above, it says
\[
 N_E\pi_*\mathrel{{\cong}}\pi_*N_{\widehat H},\qquad
 N_EP\mathrel{{\cong}} P\ta[2].
\]
The canonical orbit isomorphism is {$\pi_*\Ga\mathrel{{\cong}}\pi_*$}, or,
after composition with $R$, $PG\mathrel{{\cong}} P$.
The standard lift commutes with $\nu_H$, so these natural
isomorphisms give
\[
 N_E^nP\Ph\mathrel{{\cong}} P\ta^n[2n]\Ph
 \mathrel{{\cong}} P\Ph\ta^n[2n]=PG\mathrel{{\cong}} P.
\]
Composing with a quasi-inverse of $\Ph$ proves
\eqref{eq:return}. Equivalently, the following square commutes up to
the resulting natural isomorphism:
\[
\begin{tikzcd}[column sep=large,row sep=large]
 \calB
   \arrow[r,"P|_\calB"]
   \arrow[d,"\Ph^-|_\calB"'] &
 \calC_\ph
   \arrow[d,"N_E^n|_{\calC_\ph}"] \\
 \calB
   \arrow[r,"P|_\calB"'] &
 \calC_\ph.
\end{tikzcd} 
\]
\endgroup
\end{proof}

\begin{thm}[Hereditary affine criterion]\label{thm:hereditary}
{Let $\ph,\ps\in\Aut(H)$ and let $n,n'\geq1$ be integers. Denote the derived autoequivalences induced
by $\ph$ and $\ps$ by $\Ph$ and $\Ps$, respectively.}
The following are equivalent:
\begin{enumerate}
\item\label{item:hereditary-derived} $T_\ph^n(H)$ and $T_\ps^{n'}(H)$ are derived equivalent;
\item\label{item:hereditary-stable} they are stably equivalent;
\item\label{item:hereditary-type} $n=n'$ and $\Ph,\Ps$ are conjugate in $\DPic_\k(H)$.
\end{enumerate}
\end{thm}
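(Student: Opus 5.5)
We will prove the cycle (\ref{item:hereditary-derived})$\Rightarrow$(\ref{item:hereditary-stable})$\Rightarrow$(\ref{item:hereditary-type})$\Rightarrow$(\ref{item:hereditary-derived}).

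\emph{Step (\ref{item:hereditary-derived})$\Rightarrow$(\ref{item:hereditary-stable}).} This is Rickard's theorem. For selfinjective algebras, a derived equivalence induces a stable equivalence of Morita type, and hence a triangle equivalence of the stable module categories.

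\emph{Step (\ref{item:hereditary-type})$\Rightarrow$(\ref{item:hereditary-derived}).} Suppose $n=n'$ and $\Ps=U\Ph U^{-1}$ in $\DPic_\k(H)$ for some standard $U$.
\begin{itemize}
\item[--] Take the tilting spectroid $\calE$ given by the indecomposable summands of $U^{-1}(H)$. Then $\Ph(\calE)=\Ph U^{-1}(H)\cong U^{-1}\Ps(H)\cong U^{-1}(H)$, since $\Ps$ permutes the representable projectives. So $\calE$ is $\Ph$-stable up to isomorphism.
\item[--] The equivalence $U$ identifies $\calE$ with $H$, and the transported action is naturally isomorphic to $\ps$.
\item[--] \prpref{prop:tilting} then gives $T^n_\ph(H)\dereq T^n_{\ps'}(H)$, where $\ps'\cong\ps$.
\item[--] Since $H(x,x)=\k$, the last assertion of \prpref{prop:tilting} yields $T^n_{\ps'}(H)\cong T^n_\ps(H)$.
\end{itemize}

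\emph{Step (\ref{item:hereditary-stable})$\Rightarrow$(\ref{item:hereditary-type}).} Let $S:\umod E\to\umod E'$ be a triangle equivalence, where $E=T^n_\ph(H)$ and $E'=T^{n'}_\ps(H)$.
\begin{itemize}
\item[--] $S$ commutes with $\Om$ and with $\ta$, hence with the stable Nakayama functor $N\cong\Om^{-2}\ta$. This is the standard argument: a triangle equivalence commutes with the Serre functor, which is $\Om\ta$ on the stable category.
\item[--] $S$ sends Auslander--Reiten components to components, and transjective components to transjective components, since the latter are the non-periodic ones by \lemref{lem:full}. Counting these components gives $2n=2n'$, so $n=n'$.
\item[--] Composing with a power of $\Om$, which permutes the transjective components transitively, we may assume $S(\calC_\ph)=\calC'_\ps$.
\item[--] By \lemref{lem:full}, $S$ then restricts to an autoequivalence $V$ of $\calB$ through $P|_\calB$ and $P'|_\calB$.
\item[--] By \lemref{lem:return}, $N^n_E$ restricted to $\calC_\ph$ corresponds to $\Ph^-|_\calB$. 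Since $S N_E^n\cong N_{E'}^nS$, we get $V\Ph^-|_\calB\cong\Ps^-|_\calB V$ on $\calB$.
\item[--] By \prpref{prop:MY}, $V$ is the restriction of some standard $\widetilde V$ with $m=0$. Both $\Ph$ and $\Ps$ preserve the component, so their shift components are $0$. The isomorphism $\DPic_\k(H)\cong\Out_\k(\calB)\times\bbZ$ then gives $\widetilde V\Ph^{-1}\widetilde V^{-1}=\Ps^{-1}$, and hence $\Ph$ and $\Ps$ are conjugate.
\end{itemize}

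The main obstacle is the first bullet of this last step. The commutation of $S$ with $N$ must hold as natural isomorphism of functors, not merely on objects, because the invariant records the scalar action on morphisms. I would derive it from uniqueness of Serre functors (Bondal--Kapranov): any triangle equivalence intertwines Serre functors up to natural isomorphism. One must also check that the restriction to $\calC_\ph$ is compatible with the specific isomorphism in \lemref{lem:return}, up to natural isomorphism.
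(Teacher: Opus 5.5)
Your proposal is correct and follows the paper's proof essentially step by step. For (1)$\Rightarrow$(2) you use Rickard's theorem. For (2)$\Rightarrow$(3) you count nonperiodic components, normalize by a power of $\Om$, intertwine the Serre functors to get $SN_E\cong N_{E'}S$, apply the return isomorphism of \lemref{lem:return}, and use injectivity of the restriction map in \prpref{prop:MY}. For (3)$\Rightarrow$(1) you use equivariant tilting via \prpref{prop:tilting}.

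The only difference there is minor: you invoke its last assertion to pass from $\ps'\cong\ps$ to $\ps$, whereas the paper checks directly, using the isomorphisms $\theta_x$, that the transported action is $\ps$ itself. One small slip: the stable Serre functor is $\Om^{-1}\ta\cong\Om N$, not $\Om\ta$. This does not affect your conclusion that $S$ commutes with $\ta$ and with $N$.
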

\begin{proof}
\noindent
{(\ref{item:hereditary-derived})} $\Rightarrow$ {(\ref{item:hereditary-stable})}.
By Rickard's theorem \cite{Ric89}, a derived equivalence
between selfinjective algebras induces a triangle equivalence
between their stable module categories.
\begingroup

{(\ref{item:hereditary-stable})} $\Rightarrow$ {(\ref{item:hereditary-type})}.
Put $E_\ph=T_\ph^n(H)$ and $E_\ps=T_\ps^{n'}(H)$, and let
$V:\umod E_\ph\to\umod E_\ps$ be a triangle equivalence as in {(\ref{item:hereditary-stable})}.
The nonperiodic components are exactly the transjective ones, so
\lemref{lem:full} gives $2n=2n'$ and hence $n=n'$.
By the transitive action of $\Om$, we may compose $V$ with a power
of $\Om$ on the target so that it takes $\calC_\ph$ to
$\calC_\ps$, up to isomorphism.

{A \emph{Serre functor} on a $\k$-linear category $\calD$ with finite-dimensional morphism
spaces is an autoequivalence $S$ equipped with bifunctorial isomorphisms $D\calD(X,Y)\cong\calD(Y,SX)$ for
all $X,Y\in\calD$.}
Let $S_\ph=\Om N_{E_\ph}$ and
$S_\ps=\Om N_{E_\ps}$ be the stable Serre functors.
Since $V$ is a triangle equivalence, it commutes with
suspension and hence with its quasi-inverse $\Om$.
It also intertwines the Serre functors.
Thus we have natural isomorphisms
\[
 V\Om\cong\Om V,
 \qquad VS_\ph\cong S_\ps V.
\]
Using the definitions of $S_\ph$ and $S_\ps$, we obtain
\[
 \Om V N_{E_\ph}
 \cong V\Om N_{E_\ph}
 =VS_\ph
 \cong S_\ps V
 =\Om N_{E_\ps}V.
\]
Applying a quasi-inverse of $\Om$ gives a natural
isomorphism
\[
 VN_{E_\ph}\cong N_{E_\ps}V.
\]
Denote the equivalences in \lemref{lem:full} by
$P_\ph:\calB\to\calC_\ph$ and $P_\ps:\calB\to\calC_\ps$,
and choose a quasi-inverse $P_\ps^-$ of $P_\ps$.
{Choose representatives in $\calC_\ps$ and isomorphisms from the images under $V$ to these
representatives. As in \prpref{prop:MY}, use them to regard $V|_{\calC_\ph}$ as an equivalence
$\calC_\ph\to\calC_\ps$.}
Define
\[
 U=P_\ps^-\bigl(V|_{\calC_\ph}\bigr)P_\ph:\calB\to\calB.
\]
Using the return isomorphisms, we obtain the following square, which
commutes up to natural isomorphism:
\[
\begin{tikzcd}[column sep=large,row sep=large]
 \calB
   \arrow[r,"U"]
   \arrow[d,"\Ph^-|_\calB"'] &
 \calB
   \arrow[d,"\Ps^-|_\calB"] \\
 \calB
   \arrow[r,"U"'] &
 \calB.
\end{tikzcd}
\]
By \prpref{prop:MY}, there exists a standard derived
autoequivalence
\[
 \widetilde U:\Db(\mod H)\to\Db(\mod H)
\]
which preserves the transjective component containing
the indecomposable projective modules and whose
restriction to $\calB$ is naturally isomorphic to $U$.
Here the restriction to $\calB$ is understood by choosing
representatives in $\calB$, as in \prpref{prop:MY}.
Both $\Ph$ and $\Ps$ also preserve this component.
Thus $\widetilde U\Ph^-$ and $\Ps^-\widetilde U$
preserve this component, and the preceding commutative
diagram shows that their restrictions to $\calB$
are naturally isomorphic.
Under the second isomorphism in \eqref{eq:mesh},
the classes of these two functors have the same image:
their restrictions to $\calB$ are naturally isomorphic,
and the integer associated with each functor is zero
because each preserves the chosen component.
Since this isomorphism is injective, the two classes
are equal. Hence there is a natural isomorphism
\[
 \widetilde U\Ph^-\cong\Ps^-\widetilde U.
\]
Equivalently, $\Ph$ and $\Ps$ are conjugate in $\DPic_\k(H)$.
This proves {(\ref{item:hereditary-type})}.
\endgroup

\begingroup
{(\ref{item:hereditary-type})} $\Rightarrow$ {(\ref{item:hereditary-derived})}.  Suppose
{(\ref{item:hereditary-type})} holds.
Choose a standard derived autoequivalence $U:\Db(\mod H)\to\Db(\mod H)$
and a natural isomorphism $\Ph U\mathrel{{\cong}} U\Ps$.
{Write $H_0$ for the object set of $H$. For each $x\in H_0$, put}
\[
 P_x:=H(\blank,x).
\]
Let $\calE$ be the full subcategory of $\Db(\mod H)$
whose objects are
\[
 U(P_x)\qquad (x\in H_0).
\]
Thus, for any $x,y\in H_0$, we have
\[
 \calE(U(P_x),U(P_y))
 =\Db(\mod H)(U(P_x),U(P_y)).
\]
 It is a tilting spectroid and
the restriction of $U$ to the full subcategory on the
{chosen projectives $P_x$ $(x\in H_0)$} is an equivalence onto $\calE$.
Let $\eta:\Ph U\To U\Ps$ be the chosen natural
isomorphism. For each object $x$ of $H$, let
\[
 \kappa_x:\Ps(P_x)\to P_{\ps(x)}
\]
be the canonical isomorphism induced by $\ps$, and put
\[
 \theta_x:
 \Ph(U(P_x))
 \xrightarrow{\eta_{P_x}} U(\Ps(P_x))
 \xrightarrow{U(\kappa_x)} U(P_{\ps(x)}).
\]
Since $U(P_{\ps(x)})$ is an object of $\calE$,
these isomorphisms show that $\calE$ is stable under
$\Ph$ up to isomorphisms.

We also describe the action on morphisms.
For each morphism $f:x\to y$ in $H$, put
$P_f:=H(\blank,f):P_x\to P_y$.
By the naturality of $\eta$ and of the canonical
isomorphisms $\kappa_x$, the following diagram commutes:
\[
\begin{tikzcd}[column sep=large,row sep=large]
 \Ph(U(P_x))
   \arrow[r,"\Ph(U(P_f))"]
   \arrow[d,"\theta_x"'] &
 \Ph(U(P_y))
   \arrow[d,"\theta_y"] \\
 U(P_{\ps(x)})
   \arrow[r,"U(P_{\ps(f)})"'] &
 U(P_{\ps(y)}).
\end{tikzcd}
\]
Thus, after identifying $\Ph(U(P_x))$ with
$U(P_{\ps(x)})$ by $\theta_x$, the action on each
morphism $U(P_f)$ is given by
\[
 \theta_y\circ\Ph(U(P_f))\circ\theta_x^{-1}
 =U(P_{\ps(f)}).
\]
By the Yoneda lemma and the full faithfulness of $U$,
every morphism in $\calE$ has the form $U(P_f)$
for a unique morphism $f$ in $H$.
Hence the action transported to the projective
spectroid of $H$ agrees with the action induced by
$\ps$, on both objects and morphisms.
Thus \prpref{prop:tilting} gives the desired derived equivalence for
{every integer $n\geq1$. }
Appendix~\ref{app:descent} explains how the natural
isomorphism $\eta:\Ph U\To U\Ps$ is used to construct
the equivariant equivalence for the infinite cyclic
actions and to obtain the derived equivalence of
the orbit algebras.
The isomorphisms $\theta_x$ above are sufficient;
we do not require the equalities
$\Ph(U(P_x))=U(P_{\ps(x)})$.
\endgroup
\end{proof}

\begin{prp}[Recovering the hereditary type]\label{prop:recover}
The stable category of a hereditary affine twisted extension
determines $\De$ and $n$.
\end{prp}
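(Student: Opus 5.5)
Let $E=T_\ph^n(H)$ with $H=\k Q$ hereditary of affine type $\De$. The plan is to read $n$ and $\De$ off the stable Auslander--Reiten quiver of $E$. Since a triangle equivalence, or even any $\k$-linear equivalence, of stable categories preserves indecomposables, irreducible morphisms and $\ta\cong\Om^2N$, it preserves that quiver. This is where \lemref{lem:full} does the work.

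\emph{Step 1: recovering $n$.} The transjective components are intrinsic: they are exactly the components that are not $\ta$-periodic, since all others are tubes. By \lemref{lem:full} there are exactly $2n$ of them, so $n$ is half their number. This is the argument already used in the implication (2)$\Rightarrow$(3) of \thmref{thm:hereditary}.

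\emph{Step 2: recovering $\De$.} Again by \lemref{lem:full}, each transjective component is isomorphic, as a translation quiver, to $\bbZ Q$. For an acyclic quiver $Q$, the translation quiver $\bbZ Q$ determines the underlying graph of $Q$: the orbit graph of $\bbZ Q$ modulo $\ta$, that is, its section graph, is the underlying graph of $Q$, and any section of $\bbZ Q$ has this graph. Hence the unoriented affine diagram is recovered. Together with Section~\ref{sec:tree}, this already settles the tree types.

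\emph{Step 3: recovering the orientation datum in type $\Aaff pq$.} This is where I expect the main difficulty, because $\De$ records $(p,q)$ and not just the cycle. The underlying graph gives only $p+q$, and $\bbZ Q\cong\bbZ Q'$ whenever $Q,Q'$ are reflection-equivalent. To recover $\{p,q\}$ I would use the tubes. For $\Aaff pq$ the regular components of $\Db(\mod H)$ are two exceptional tubes of ranks $p$ and $q$ (omitted when the rank is $1$) and homogeneous tubes. Through $P$ and the covering formula \eqref{eq:stablecover}, the $\ta$-periodic components of $\umod E$ are the images of tubes $\calT[t]$ modulo the action of $G$. Since $G$ shifts by $[2n]$, the pushdown is injective on each tube by the same covering argument, so ranks are preserved. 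Thus the multiset of ranks of nonhomogeneous tubes in $\umod E$ consists of the ranks $p$ and $q$, those ranks greater than $1$, each repeated $2n$ times. Together with $p+q$, this determines $(p,q)$ with $p\geq q$, including the cases $q=1$ and the Kronecker case $p=q=1$.

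A cleaner alternative for Step 3 uses the fact that an orientation of the cycle with $p$ arrows clockwise and $q$ counterclockwise is determined, up to reflection, by $|p-q|$. This quantity can be read off $\bbZ Q$ as the ``winding'' of $\ta$ around the cylinder: going once around a section cycle and comparing it with a $\ta$-shift. I would try the tube count first, since it relies only on standard structure. The point that needs care is showing that the pushdown of a tube is again a single tube of the same rank, with no extra identification coming from $\Ph$. This holds because $G$ moves $\calT[t]$ to $\calT[t+2n]\neq\calT[t]$.
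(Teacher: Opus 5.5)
Your proposal is correct. Steps 1 and 2 coincide with the paper's proof: $n$ is half the number of non-$\ta$-periodic components by \lemref{lem:full}, and the underlying graph is read off from the $\ta$-orbits of one transjective component $\Gamma\cong\bbZ Q$.

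\textbf{Where the routes differ.} For $\{p,q\}$ you take a different route. The paper stays inside $\Gamma$. Lifting one traversal of the cycle in one direction ends at $\ta^{-q}x$, and in the other direction at $\ta^{-p}x$; your ``winding'' alternative is essentially this. The Kronecker case is recognized by its two $\ta$-orbits joined by double arrows.

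You instead count nonhomogeneous tubes, using the tubular type $(p,q)$ of $\Aaff pq$. This works for the following reasons.
\begin{enumerate}
\item $\Ph$ permutes the tubes of $\mod H$ and preserves their ranks.
\item $G=\Ph\ta^n[2n]$ moves $\calT_\la[t]$ into shift degree $t+2n$, so no component is stabilized by a nontrivial power of $G$.
\item The push-down is dense, by local support-finiteness as in \lemref{lem:full}, and it preserves Auslander--Reiten triangles.
\end{enumerate}
Hence each tube maps isomorphically onto a tube of $\umod E$ of the same rank. Every $\ta$-periodic component arises exactly once from the shift degrees $0,\dots,2n-1$, which gives your multiplicity $2n$.

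\textbf{What each approach buys.} The paper's argument is purely combinatorial in $\bbZ Q$. About the regular components it needs only that they are $\ta$-periodic. Yours imports the Dlab--Ringel tubular type and somewhat more covering theory, but it is conceptual. Moreover, the rank data divided by $2n$ is the tubular type itself. This already separates all affine types: type $\widetilde A$ has at most two exceptional tubes, the tree types have three. So your Step 2 would not even be needed.

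\textbf{A minor caution.} Your parenthetical claim that an arbitrary $\k$-linear equivalence preserves $\ta$ because $\ta\cong\Om^2N$ does not follow from that formula. Such an equivalence need not commute with $\Om$. This is why Appendix~\ref{app:additive-stable} invokes the Auslander--Reiten result on stable equivalences instead. For the statement at hand, triangle equivalences suffice.
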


\begin{proof}
\begingroup
We first recover $n$.
By \lemref{lem:full}, there are exactly $2n$
nonperiodic components in the stable Auslander--Reiten
quiver.
The same assertion holds in the affine tree types:
the proof uses the vanishing between the shifted
transjective components, and all the remaining
components are $\ta$-periodic.
Thus the number of nonperiodic components determines $n$.

We next recover $\De$.
Choose one nonperiodic component $\Gamma$ and write
\[
 \Gamma\cong\bbZ Q,
\]
where $H=\k Q$ is the hereditary algebra used in the
extension.
Identifying $\Gamma$ with $\bbZ Q$ via this isomorphism,
consider the projection
\[
 \pi_0:\Gamma_0\to Q_0,\qquad (i,v)\mapsto v,
\]
where $\Gamma_0$ and $Q_0$ denote the vertex sets.
Since $\ta(i,v)=(i-1,v)$, the fibers of $\pi_0$ are
precisely the $\ta$-orbits.
Thus $\pi_0$ induces a bijection from the set of
$\ta$-orbits of vertices of $\Gamma$ to $Q_0$.
For each arrow $u\to v$ in $Q$, the repetition quiver
has arrows
\[
 (i,u)\to(i,v),\qquad
 (i,v)\to(i+1,u)
 \qquad(i\in\bbZ),
\]
and every arrow of $\Gamma$ has one of these forms.
Consequently, two distinct vertices $u,v$ of $Q$
are adjacent in the underlying unoriented graph
if and only if an arrow of $\Gamma$ joins
$\pi_0^{-1}(u)$ and $\pi_0^{-1}(v)$ in either direction.
Hence we recover adjacency in the underlying graph
from $\Gamma$ and $\ta$.
We distinguish the following three cases.

\smallskip
\noindent
\setcounter{proofstep}{0}
{\proofstep{case:recover-tree}} The underlying graph of $Q$ is a tree.
The adjacency described above recovers this tree
from $\Gamma$.
It therefore determines which of the types
$\widetilde D_r$, $\widetilde E_6$,
$\widetilde E_7$, and $\widetilde E_8$ occurs.

\smallskip
\noindent
{\proofstep{case:recover-cycle}} The underlying graph of $Q$ is a cycle of
length at least three.
The adjacency described above recovers the cycle.
To determine its type, we also recover the numbers
$p$ and $q$ of arrows in its two cyclic directions.
Choose one of these directions, and let $p$ be the
number of arrows pointing in this direction and
$q$ the number pointing in the opposite direction.

Recall that, for each arrow $a:u\to v$ of $Q$
and each integer $i$, the repetition quiver has arrows
\[
 (i,u)\longrightarrow(i,v),
 \qquad
 (i,v)\longrightarrow(i+1,u),
\]
and $\ta(i,u)=(i-1,u)$.
Starting at a vertex $x$ of $\Gamma$, follow arrows
whose successive $\ta$-orbits go once around the
cycle in the chosen direction.
At each step there is a unique such arrow.
The first coordinate stays unchanged at each of the
$p$ steps agreeing with the orientation of $Q$,
and increases by one at each of the other $q$ steps.
Hence the resulting path ends at $\ta^{-q}x$.
Going around the cycle in the opposite direction
similarly gives a path ending at $\ta^{-p}x$.
Thus the two lifted paths have the form
\[
 x=x_0\longrightarrow x_1\longrightarrow\cdots
   \longrightarrow x_{p+q}=\ta^{-q}x,
\]
\[
 x=y_0\longrightarrow y_1\longrightarrow\cdots
   \longrightarrow y_{p+q}=\ta^{-p}x.
\]
These paths are determined by the translation quiver
$\Gamma$ once $x$ and a cyclic direction are chosen.
Consequently, $\Gamma$ determines the unordered pair
$\{p,q\}$.
Our convention $p\geq q$ then determines the type
$\widetilde A_{p,q}$.

\smallskip
\noindent
{\proofstep{case:recover-Kronecker}} $Q$ is the Kronecker quiver.
In this case $\Gamma$ has exactly two $\ta$-orbits.
For a vertex $x$ in either orbit, there are two
parallel arrows from $x$ to a vertex in the other
orbit.
These properties distinguish the Kronecker case,
which has type $\widetilde A_{1,1}$.

\smallskip
A triangle equivalence of stable categories preserves
the stable Auslander--Reiten quiver, including its
translation and arrow multiplicities.
It therefore preserves the data used above.
Thus the stable category determines both $\De$ and $n$.
\par\endgroup
\end{proof}

\begingroup
For comparison, if $\La=T_\ph^n(B)$ and $\De$ is known,
the folding number can also be read from the
Grothendieck group $K_0(\mod\La)$.
We write $K_0(\La):=K_0(\mod\La)$ and denote by
$|\De|$ the number of vertices of $\De$.
Then
\endgroup
\begin{equation}\label{eq:n}
 n=\frac{\rank K_0(\La)}{|\De|},
 \qquad
 |\De|=
 \begin{cases}
 p+q & \text{if }\De=\Aaff pq,\\
 r+1 & \text{if }\De=\widetilde D_r
                 \text{ or }\widetilde E_r.
 \end{cases}
\end{equation}

\section{Piecewise hereditary reduction in type \texorpdfstring{$\widetilde A$}{A}}\label{sec:reduction}

\begingroup
\begin{rmk}[Choice of the hereditary quiver]\label{rmk:hereditary-models}

In this section we reduce the case of type $\widetilde A$ to hereditary algebras.
For this purpose, we choose the orientation of the
underlying cycle so that the rotations needed below
preserve the directions of the arrows and hence define
quiver automorphisms. 
When $\gcd(p,q)>1$, an orientation with a unique source
does not realize all of them. We use the following models:
\begin{itemize}
\item If $p>q$, put $d=\gcd(p,q)$ and orient the cycle by the repeated
word $(+^{p/d}-^{q/d})^d$. Its quiver automorphism group is cyclic of order $d$,
and we denote it by $C_d$.
\item If $p=q=m>1$, use the $2m$-cycle with alternating arrow directions. 
Its quiver automorphism group is generated by the rotation
through two vertices and a reflection fixing a vertex.
These automorphisms preserve the sources and the sinks.
The group is dihedral of order $2m$, and we denote it by
$D_{2m}$.
\item For $(p,q)=(1,1)$, use the two-arrow Kronecker quiver.
\end{itemize}
\begingroup
Here $+$ and $-$ refer to the two directions around a fixed traversal
of the cycle.
All orientations with the same two orientation numbers give the same
hereditary derived type. In the remainder of this section, $Q$ is the
chosen quiver and $H=\k Q$.

\endgroup
\end{rmk}
\endgroup

{
For the chosen algebra $H=\k Q$, we have
$\Out^0(H)\subseteq\Out_\k(H)=\Pic(H)$,
since $H$ is basic (see \rmkref{rmk:outer-groups}).
}
\begin{lem}\label{lem:finite}
\begingroup
Let $q_H:\DPic_\k(H)\to\DPic_\k(H)/\Out^0(H)$ be the quotient
homomorphism. For the {models in \rmkref{rmk:hereditary-models}}, every finite subgroup of
$\DPic_\k(H)/\Out^0(H)$ is contained in $q_H(\Pic(H))$.
\endgroup
\end{lem}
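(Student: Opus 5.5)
Via the isomorphism $\DPic_\k(H)\cong\Out_\k(\calB)\times\bbZ$ of \prpref{prop:MY}, I will describe the quotient $\DPic_\k(H)/\Out^0(H)$ explicitly. The shift factor $\bbZ$ is central and torsion-free. Hence a finite subgroup $K$ of the quotient maps injectively to its image in the first factor, and the image of $K$ in $\bbZ$ must be trivial. So I first reduce to elements preserving the transjective component $\calB$, that is, to $\Out_\k(\calB)\cong\Out_\k(\calM_Q)$.

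Next I split $\Out_\k(\calM_Q)$ into a combinatorial part and a scalar part. Restricting to vertices and arrows of $\bbZ Q$ gives a homomorphism $\Out_\k(\calM_Q)\to\Aut(\bbZ Q)$ modulo the identity; its image consists of translation-quiver automorphisms. For $Q$ a cycle, $\Aut(\bbZ Q)$ is generated by $\ta$, by the lifts of the graph automorphisms of the cycle, and possibly by a ``twisted rotation'' that shifts along $\ta$-orbits. The kernel consists of autoequivalences that fix all objects and rescale arrows. For the $\widetilde A$ mesh category I would identify this kernel, modulo natural isomorphisms, with a torus $\kk$ (the cycle has one independent loop, so $H^1$ of the cycle with coefficients in $\kk$). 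For the Kronecker quiver I would instead use $\PGL_2$ acting on the two arrows. In every case this kernel should be exactly the image of $\Out^0(H)$: a rank-one torus in the $\widetilde A_{p,q}$ cases, and $\PGL_2$ in the Kronecker case. I expect this to follow from the computation $\Out^0(\k Q)\cong\kk$ for a non-oriented cycle, obtained by rescaling one arrow.

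Thus $\DPic_\k(H)/\Out^0(H)$ is an extension of a subgroup of $\Aut(\bbZ Q)\times\bbZ$ with trivial kernel. A finite subgroup $K$ must have finite image in $\Aut(\bbZ Q)$. The translation $\ta$ has infinite order, and elements of $\Aut(\bbZ Q)$ act on the set of $\ta$-orbits, which is the vertex set of $Q$, with $\ta$-components of the form $\ta^{k}$. A torsion element therefore has total $\ta$-degree zero after accounting for its finite-order graph part. I would make this precise by defining a ``degree'' homomorphism to $\mathbb{Q}$, for instance the average displacement of the first coordinate, which kills torsion. Torsion elements then lie in the subgroup of automorphisms of $\bbZ Q$ with degree zero. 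For the chosen models of \rmkref{rmk:hereditary-models}, I would check that this subgroup equals the image of $\Aut(Q)$, namely $C_d$, $D_{2m}$, or the swap for the Kronecker quiver, which lifts to $\Pic(H)$ via $\k\si$.

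The main obstacle is this last verification. In the cycle case with $p\ne q$, the translation-quiver automorphisms of $\bbZ Q$ include rotations combined with partial $\ta$-shifts: rotating the cycle moves vertices with different first-coordinate conventions. I must show that the degree-zero ones are exactly the quiver automorphisms of the chosen orientation. This is precisely why the orientation $(+^{p/d}-^{q/d})^d$ and the alternating one were chosen: a rotation by $k$ steps is degree zero only if it preserves the arrow word, that is, only if it lies in $\Aut(Q)$. I would handle this by computing the $\ta$-displacement of a rotation along the lifted paths used in \prpref{prop:recover}, case \ref{case:recover-cycle}, where traversing the cycle yields $\ta^{-q}$ or $\ta^{-p}$. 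From this the displacement of a rotation by $k$ steps is read off from the count of $-$ arrows passed. The Kronecker case would be handled separately: there the degree-zero part of $\Aut(\bbZ Q)$ modulo $\PGL_2$ is trivial or generated by the arrow swap, which already lies in $\PGL_2$. Finally I would lift each element of $K$ to $q_H(\Pic(H))$, using that $\Pic(H)\to\Out_\k(\calB)$ is compatible with \prpref{prop:MY}.
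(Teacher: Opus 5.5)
\textbf{Overall.} Your route is the paper's. You split off the central shift factor of \prpref{prop:MY}, use that the kernel of the action of $\Out_\k(\calB)$ on the translation quiver $\bbZ Q$ is $\Out^0(H)$, and show that the torsion of $\Aut(\bbZ Q,\ta)$ is the image of $\Aut(Q)$ for the chosen models. Your average $\ta$-displacement is the paper's height homomorphism $c$ divided by $2$, used uniformly in place of the presentation $L(p,q)\cong\bbZ\ds\bbZ/d\bbZ$. Your rotation count is also right: some lift of the rotation by $k$ steps has degree zero exactly when $(p+q)/d$ divides $k$.

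\textbf{The kernel step.} Your justification here is false in the bypass model $p>q=1$. There $Q$ has the arrow $b\colon 0\to p$ parallel to the path $P=\al_p\cdots\al_1$, so object-fixing automorphisms need not rescale arrows: $b\mapsto b+vP$ is allowed. By the bypass computation in Section~\ref{sec:parameters}, $\Out^0(H)=\Pic(H)\cong\Aff_1(\k)$, which is not a torus. So the kernel of $\Out_\k(\calB)\to\Aut(\bbZ Q,\ta)$ contains the nonscalar twists induced by these automorphisms. Restricting to arrows is not even defined there; only the action on vertices is. The multiplicative-cocycle (``$H^1$ of the cycle'') description of the kernel needs the absence of paths parallel to arrows, which holds only when $p,q>1$, as in the proof of \lemref{lem:meshgroup}. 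What you actually need is that this kernel is exactly the image of $\Out^0(H)$, so that $\DPic_\k(H)/\Out^0(H)$ embeds in $\Aut(\bbZ Q,\ta)\times\bbZ$. That is still true, but it does not follow from $\Out^0(\k Q)\cong\kk$. Take it from \cite[Proposition~1.7 and Theorem~0.1]{MY}, as the paper does, or check it directly in the bypass case using $\Aff_1(\k)$.

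\textbf{Reflections.} Your degree-zero verification treats only rotations; the reflections of the cycle must also be handled.
\begin{itemize}
\item For $p\neq q$, no reflection lifts to an automorphism commuting with $\ta$. A lift would carry a clockwise traversal from $w$, which ends at $\ta^{-q}w$, onto a counterclockwise traversal, which ends at $\ta^{-p}$ of its starting point. This forces $p=q$.
\item For $p=q=m$, the reflections through midpoints of edges exchange sources and sinks. Their lifts change the height by an odd integer, so their degree lies in $\frac12+\bbZ$. Only the reflections through vertices, which lie in $D_{2m}$, have degree-zero lifts.
\end{itemize}
With these two repairs your argument proves the lemma.
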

\begin{proof}
{Put $d:=\gcd(p,q)$.}
First suppose $p+q\geq3$.
The projection $(i,v)\mapsto v$ identifies the $\ta$-orbits
of vertices of $\bbZ Q$ with the vertices of the underlying
unoriented cycle of $Q$.
Every automorphism of $\bbZ Q$ commuting with $\ta$
therefore induces a symmetry of this cycle.
We determine which symmetries lift to such automorphisms
and describe their lifts.
We first construct a lift of the clockwise rotation
by one vertex.
Label the vertices of the underlying cycle clockwise as
$v_0,\ldots,v_{p+q-1}$, with indices taken modulo $p+q$.
Choose this direction so that $p$ arrows of $Q$ point
clockwise and $q$ arrows point counterclockwise.
Recall that $\ta(i,v_j)=(i-1,v_j)$ in $\bbZ Q$.
{For each $i\in\bbZ$ and each $j\in\bbZ/(p+q)\bbZ$,} the arrow in $\bbZ Q$
which starts at $(i,v_j)$ and follows the cycle clockwise
is
\[
\begin{cases}
 (i,v_j)\to(i,v_{j+1}),
   &\text{if }v_j\to v_{j+1}\text{ in }Q,\\
 (i,v_j)\to(i+1,v_{j+1}),
   &\text{if }v_{j+1}\to v_j\text{ in }Q.
\end{cases}
\]
Sending each vertex to the endpoint of this arrow defines
an automorphism of the translation quiver $\bbZ Q$.
Let $\ro$ be its inverse. {Explicitly, for all $i\in\bbZ$ and $j\in\bbZ/(p+q)\bbZ$,}
\[
\ro(i,v_{j+1})=
\begin{cases}
 (i,v_j),&\text{if }v_j\to v_{j+1}\text{ in }Q,\\
 (i-1,v_j),&\text{if }v_{j+1}\to v_j\text{ in }Q.
\end{cases}
\]
Then $\ro$ and $\ta$ commute, and their relation is
\[
 \ro^{p+q}=\ta^q.
\]
{Indeed, a full clockwise traversal starting at any vertex $w$ of $\bbZ Q$ has endpoint $\ta^{-q}w$,} as in
\prpref{prop:recover}. A lift of a fixed cycle symmetry is determined up to a
power of $\ta$. A reversing symmetry can have a commuting lift only
{if the translations $\ta^{-q}$ and $\ta^{-p}$ obtained by traversing the cycle in the two directions
agree, that is, if $p=q$.}

Put $x=\ro$ and $y=\ta\ro^{-1}$. The orientation-preserving group is
\begin{equation}\label{eq:lattice}
 L(p,q)=\langle x,y\mid xy=yx,\ x^p=y^q\rangle
       \mathrel{{\cong}}\bbZ \ds{\bbZ/d\bbZ}.
\end{equation}
If $p>q$, its finite-order elements form precisely the subgroup
generated by
\begin{equation}\label{eq:rotation}
 r=x^{p/d}\,y^{-q/d}
   =\ro^{(p+q)/d}\,\ta^{-q/d}.
\end{equation}
The element $r$ has order $d$, and the subgroup
$\langle r\rangle$ is the rotation group of the chosen
periodic orientation.
The component shift supplies an additional infinite cyclic factor in
$\DPic(H)/\Out^0(H)$, by \cite[Theorem~0.1]{MY}. Thus every finite
subgroup lies in the indicated $C_d$.

Suppose that $p=q=m>1$.
For the alternating quiver $Q$, choose the clockwise
labelling $v_0,\ldots,v_{2m-1}$ so that the vertices
with even indices are sources and those with odd
indices are sinks. Indices are taken modulo $2m$.
The reflection
\[
 v_j\longmapsto v_{-j}
\]
preserves the directions of the arrows of $Q$.
It extends to an automorphism $s$ of $\bbZ Q$ by
\[
 s(i,v_j)=(i,v_{-j}){\qquad(i\in\bbZ,\ j\in\bbZ/(2m)\bbZ).}
\]
For each arrow $u\to v$ of $\bbZ Q$, its image under
$s$ is the unique arrow $s(u)\to s(v)$.
Then
\[
 s^2=1,\qquad s\ta=\ta s,\qquad
 s\ro s^{-1}=\ta\ro^{-1}.
\]
Recall that $x=\ro$ and $y=\ta\ro^{-1}$.
Thus
\[
 sxs^{-1}=y,\qquad sys^{-1}=x.
\]
The group of automorphisms of $\bbZ Q$ commuting
with $\ta$ is therefore
\[
 L(m,m)\rtimes\langle s\rangle.
\]
Put $r=xy^{-1}$. Then $r$ has order $m$, and
\[
 srs^{-1}=r^{-1}.
\]
There is also a direct geometric description of its finite part.
\begingroup
Write $(i,v)$ for a vertex of the repetition quiver $\bbZ Q$, and
define its {\emph{height}} by
\[
 h(i,v)=
 \begin{cases}
 2i,&v\text{ is a source of }Q,\\
 2i+1,&v\text{ is a sink of }Q.
 \end{cases}
\]
Every arrow raises height by one. For an automorphism $f$ of this
translation quiver, the difference $h(fw)-h(w)$ {for a vertex $w\in(\bbZ Q)_0$} has the same value
at both ends of every arrow. Connectedness makes this difference
independent of the vertex $w$. {Write $\Aut(\bbZ Q,\ta)$ for the group of quiver automorphisms
commuting with $\ta$.} It therefore defines a homomorphism
\[
 c:\Aut(\bbZ Q,\ta)\longrightarrow\bbZ,
 \qquad c(f)=h(fw)-h(w).
\]
The union of the height-$0$ and height-$1$ vertices is precisely the
alternating slice $Q$. Hence $c(f)=0$ implies that $f$ preserves this
slice. Its restriction is an automorphism of $Q$, and commutation
with $\ta$ determines $f$ on the whole repetition. Conversely, each
automorphism of $Q$ extends in this way. Thus
\[
 \Ker c\mathrel{{\cong}}\Aut(Q)=D_{2m}.
\]
Since $\bbZ$ has no nontrivial finite subgroup, every finite subgroup
of $\Aut(\bbZ Q,\ta)$ lies in this kernel. Its separate
component-shift coordinate is also zero.
\endgroup

For the Kronecker quiver, \cite[Theorem~4.3 and Remark~4.4]{MY} give
\begin{equation}\label{eq:kronecker}
 \DPic_\k(H)\mathrel{{\cong}}\bbZ ^2\times\PGL_2(\k),
 \qquad \Pic(H)=\Out^0(H)=\PGL_2(\k).
\end{equation}
The quotient is torsion-free. Finally, for the other models the inverse
image of the slice automorphism group in $\DPic(H)$ is exactly
$\Pic(H)$: the kernel is $\Out^0(H)$ and the slice stabilizer is
$\Out^0(H)\rtimes\Aut(Q)$, by \cite[Proposition~1.7 and
Theorem~0.1]{MY}. This completes the proof.
\end{proof}

\begin{rmk}
The finite subgroup in \eqref{eq:lattice} must be retained. For example,
type $\Aaff42$ has a hereditary orientation with a rotation of order
two. The above calculation uses the general restriction theorem of
\cite{MY} and the repetition quiver directly; no torsion-free formula
for the entire affine vertex group is being assumed.
\end{rmk}

\begin{prp}[Hereditary reduction]\label{prop:reduction}
Let $B$ be piecewise hereditary of type $\Aaff pq$, and let $H$ be
the {model chosen in \rmkref{rmk:hereditary-models}}. There is a tilting spectroid for $B$, equivalent
to $H$, which is stable up to isomorphisms under every automorphism of
$B$. In particular, {for every $\ph\in\Aut(B)$ and every positive integer $n$,}
\[
 T_\ph^n(B)\dereq T_\th^n(H)
\]
\begingroup
for some $\th\in\Aut(H)$. More precisely, for a chosen standard
equivalence $F:\Db(\mod B)\to\Db(\mod H)$ with quasi-inverse
$F^-$, the induced derived action of $\th$ is naturally isomorphic
to $F\Ph F^-$.
\endgroup
\end{prp}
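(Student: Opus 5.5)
The plan is to mimic the tree-type hereditary reduction of \cite[Lemma~5.4 and Proposition~5.5]{Asa}, with \lemref{lem:finite} replacing the admissible-orientation argument. Fix a standard equivalence $F:\Db(\mod B)\to\Db(\mod H)$ and consider the homomorphism
\[
 \Aut(B)\longrightarrow\DPic_\k(H),\qquad \ph\longmapsto F\Ph F^-.
\]
Its image is the conjugate of the image of $\Aut(B)\to\Out_\k(B)\subseteq\DPic_\k(B)$. By \rmkref{rmk:outer-groups}, $\Out(B)/\Out^0(B)$ is finite, and conjugation by $F$ identifies $\Out^0(B)$ with $\Out^0(H)$. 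Hence the image of $\Aut(B)$ in $\DPic_\k(H)/\Out^0(H)$ is a finite subgroup $K$. By \lemref{lem:finite}, $K\subseteq q_H(\Pic(H))$. Since $\Pic(H)\supseteq\Out^0(H)$, it follows that $F\Ph F^-\in\Pic(H)=\Out_\k(H)$ for every $\ph\in\Aut(B)$.

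Next I construct the stable tilting spectroid. Let $\calE$ be the full subcategory of $\Kb(\prj B)\simeq\Db(\mod B)$ on objects $F^-(P_x)$, $x\in H_0$. This is a tilting spectroid for $B$ equivalent to $H$. For $\ph\in\Aut(B)$, write $F\Ph F^-\cong\Th$ with $\th\in\Aut(H)$, where $\Th$ is the induced derived action. Then
\[
 \Ph F^-(P_x)\cong F^-\Th(P_x)\cong F^-(P_{\th x}),
\]
so $\calE$ is stable up to isomorphism under every $\Ph$. The isomorphism $\Ph F^-\cong F^-\Th$ also shows, exactly as in the proof of (\ref{item:hereditary-type})$\Rightarrow$(\ref{item:hereditary-derived}) of \thmref{thm:hereditary}, that the action transported to $H$ through $\calE\simeq H$ is naturally isomorphic to $\th$ on both objects and morphisms. \prpref{prop:tilting} then gives $T_\ph^n(B)\dereq T_{\th'}^n(H)$, where $\th'$ is the transported automorphism. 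This $\th'$ is naturally isomorphic to $\th$. Since $H$ is a basic hereditary algebra of an acyclic quiver, $H(x,x)=\k$ for every object $x$. The last assertion of \prpref{prop:tilting} therefore lets us replace $\th'$ by $\th$ up to isomorphism of orbit algebras. Finally, the derived action of $\th'$ is naturally isomorphic to $F\Ph F^-$ by construction, which is the precise statement.

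The main obstacle is the first step: making sure that the image of $\Aut(B)$ modulo $\Out^0(H)$ is genuinely finite. Two facts are needed. First, the connected component $\Out^0(B)$ has to map onto $\Out^0(H)$ inside $\DPic$, which is \cite[Theorem~1]{Yek}. Second, inner automorphisms must die in $\DPic_\k(B)$, so that the image factors through $\Out(B)$. The Kronecker case needs a separate remark, because there the quotient is torsion-free: the finite image is trivial, so everything already lies in $\Out^0(H)=\Pic(H)$. A smaller subtlety is that $\th$ is only determined up to inner automorphisms. This is harmless because inner automorphisms induce functors naturally isomorphic to the identity.
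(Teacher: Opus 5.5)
Your proposal is correct and follows essentially the same route as the paper. Finiteness of $\Out(B)/\Out^0(B)$, together with Yekutieli's identification of the identity components, makes $q_H(F\Pic(B)F^{-1})$ finite. \lemref{lem:finite} then forces $F\Pic(B)F^{-1}\subseteq\Pic(H)$, and the spectroid on the objects $F^-(P_x)$ is fed into \prpref{prop:tilting}. Your explicit appeal to the last assertion of \prpref{prop:tilting}, to pass from the transported automorphism to $\th$, spells out a step the paper leaves implicit.
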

\begin{proof}
\begingroup
Choose a standard equivalence
$F:\Db(\mod B)\to\Db(\mod H)$ and a quasi-inverse $F^-$.
Write $F\Pic(B)F^{-1}$ for the subgroup of $\DPic_\k(H)$ obtained
by conjugating classes, as in Section~1.
Conjugation identifies $\Out^0(B)$ with $\Out^0(H)$ by
\cite[Theorem~1]{Yek}. 
{
Put
\[
 K:=q_H\bigl(F\Pic(B)F^{-1}\bigr).
\]
Since $B$ is basic, we have
$\Pic(B)=\Out_\k(B)$.
The algebraic group $\Out_\k(B)$ has only finitely many
connected components, and each of them is a coset of
$\Out^0(B)$.
Hence $\Pic(B)/\Out^0(B)$ is finite.
Since conjugation by $F$ identifies $\Out^0(B)$ with
$\Out^0(H)$, it induces an isomorphism
\[
 \Pic(B)/\Out^0(B)\cong K.
\]
Therefore $K$ is finite.
}
\lemref{lem:finite} gives $K\subseteq q_H(\Pic(H))$.
Since ${\Ker} q_H=\Out^0(H)\subseteq\Pic(H)$, taking inverse images
gives the required inclusion:
\[
 F\Pic(B)F^{-1}
 \subseteq q_H^{-1}(K)
 \subseteq q_H^{-1}\bigl(q_H(\Pic(H))\bigr)=\Pic(H).
\]
Thus every transported automorphism preserves the projective
spectroid of $H$ up to isomorphism. Let $\calE$ be the full
subcategory of $\Db(\mod B)$ on the objects $F^-(P_x)$, {where $P_x=H(\blank,x)$ for each object $x$ of $H$.}
This is a tilting spectroid equivalent to $H$.
For each $\ph\in\Aut(B)$, the inclusion just proved gives an
automorphism $\th\in\Aut(H)$, with induced derived functor $\Theta$,
and a natural isomorphism
\[
 F\Ph F^-\mathrel{{\cong}}\Theta,
 \qquad\text{hence}\qquad \Ph F^-\mathrel{{\cong}} F^-\Theta.
\]
Since $\Theta$ permutes the projectives up to isomorphism, this proves
that $\calE$ is $\Ph$-stable {up to isomorphism}.
The same $\calE$ works for every
$\ph$, but no coherent action of the whole automorphism group is
needed: \prpref{prop:tilting} is applied separately to the cyclic
action generated by the chosen $\ph$.
\endgroup
\end{proof}

\begin{proof}[Proof of \thmref{thm:main} for twisted extensions]
\begingroup
Recall that
\[
 \La=T_\ph^n(B),
 \qquad
 \La'=T_\ps^{n'}(B').
\]
Here $B$ and $B'$ are piecewise hereditary algebras
of affine type, $\ph\in\Aut(B)$,
$\ps\in\Aut(B')$, and $n,n'\geq1$.

\medskip
\noindent
{(\ref{item:main-derived})} $\Rightarrow$ {(\ref{item:main-stable})}.
As in the proof of \thmref{thm:hereditary}, this follows
from Rickard's theorem \cite{Ric89}: a derived equivalence
between selfinjective algebras induces a triangle equivalence
between their stable module categories.

\medskip
\noindent
{(\ref{item:main-stable})} $\Rightarrow$ {(\ref{item:main-type})}.
Suppose that $\La$ and $\La'$ are stably equivalent.
The tree cases and their separation from type
$\widetilde A$ were handled in Section~\ref{sec:tree}.
We may therefore assume that both $B$ and $B'$ have
 type $\widetilde A$.
Apply \prpref{prop:reduction} to $(B,\ph)$.
There are a chosen hereditary model $H$ and
an automorphism $\ph_0\in\Aut(H)$ such that
\[
 \La=T_\ph^n(B)\dereq T_{\ph_0}^n(H)=:E.
\]
Apply the same proposition separately to $(B',\ps)$.
There are a chosen hereditary model $H'$ and
an automorphism $\ps_0\in\Aut(H')$ such that
\[
 \La'=T_\ps^{n'}(B')\dereq T_{\ps_0}^{n'}(H')=:E'.
\]
By {(\ref{item:main-derived})} $\Rightarrow$ {(\ref{item:main-stable})}, these reduction equivalences
also induce stable equivalences.
Hence $E$ and $E'$ are stably equivalent.
By \prpref{prop:recover}, $H$ and $H'$ have the same
hereditary type $\De$, and $n=n'$.
Since we use the fixed model for each type, we have
$H=H'=H_\De$.
The implication {(\ref{item:hereditary-stable})} $\Rightarrow$ {(\ref{item:hereditary-type})} of
\thmref{thm:hereditary} now shows that the derived
autoequivalences induced by $\ph_0$ and $\ps_0$ are
conjugate in $\DPic_\k(H_\De)$.
By \prpref{prop:reduction}, their conjugacy classes
are precisely those obtained by transporting the
original actions of $\ph$ and $\ps$.
Thus all three components of the two triples agree,
and hence $\typ(\La)=\typ(\La')$.

\medskip
\noindent
{(\ref{item:main-type})} $\Rightarrow$ {(\ref{item:main-derived})}.
Suppose that $\typ(\La)=\typ(\La')$.
Thus $B$ and $B'$ have the same hereditary type $\De$,
$n=n'$, and the third components of their triples agree.
The tree-type case was proved in Section~\ref{sec:tree}.
We may therefore assume that $\De$ has type
$\widetilde A$.

Apply \prpref{prop:reduction} to $(B,\ph)$, using
the chosen hereditary algebra $H_\De$.
There is an automorphism $\ph_0\in\Aut(H_\De)$ such that
\[
 \La=T_\ph^n(B)\dereq T_{\ph_0}^n(H_\De)=:E.
\]
Apply the same proposition separately to $(B',\ps)$,
using the same $H_\De$.
There is an automorphism $\ps_0\in\Aut(H_\De)$ such that
\[
 \La'=T_\ps^{n'}(B')\dereq T_{\ps_0}^{n'}(H_\De)=:E'.
\]
By \prpref{prop:reduction}, the derived autoequivalences
induced by $\ph_0$ and $\ps_0$ represent the third
components of the original triples.
Since these components agree, the two induced
autoequivalences are conjugate in $\DPic_\k(H_\De)$.
By the implication {(\ref{item:hereditary-type})} $\Rightarrow$ {(\ref{item:hereditary-derived})} of
\thmref{thm:hereditary}, we have $E\dereq E'$.
Therefore {(\ref{item:main-derived})} follows because
\[
 \La\dereq E\dereq E'\dereq\La'.
\]

We now prove that the triple in \dfnref{def:type}
is independent of the chosen presentation.
Suppose that an algebra $A$ has two presentations
as twisted multifold extensions:
\[
 A\cong T_\ph^n(B),
 \qquad
 A\cong T_\ps^{n'}(B').
\]
Then $T_\ph^n(B)$ and $T_\ps^{n'}(B')$ are isomorphic,
and hence stably equivalent.
Applying {(\ref{item:main-stable})} $\Rightarrow$ {(\ref{item:main-type})}, which we have just proved,
we obtain
\[
 (\De,n,[\ga_{B,\ph}])
 =
 (\De',n',[\ga_{B',\ps}]),
\]
where $\De$ and $\De'$ are the hereditary types of
$B$ and $B'$, respectively.
Thus the two presentations give the same triple,
as required.
\endgroup
\end{proof}

\section{Explicit parameters in type \texorpdfstring{$\widetilde A$}{A}}
\label{sec:parameters}

\begingroup
The preceding sections establish the type as a complete invariant for derived and stable equivalence of twisted multifold extensions. Our next aim is to determine explicitly all types that actually occur.
In this section we treat the affine types $\tilde{A}$; the affine types $\tilde{D}$ and $\tilde{E}$ are
treated in {Section~\ref{sec:treetable}.}
Since the folding number can be any positive integer, it remains to determine the possible third components of the type.
For each affine type $\De$, we start with the set
\[
 {\calS_\De}
 :=\bigl\{\ga_{B,\ph}{\mid}
 B\text{ is piecewise hereditary of type }\De,
 \ \ph\in\Aut(B)\bigr\}
 \subseteq\DPic_\k(H_\De),
\]
where $\ga_{B,\ph}$ is defined using the standard equivalences chosen in Section 1.
Our task is to determine the set of conjugacy classes represented by these elements.
By \prpref{prop:reduction},
\[
 \bigl\{[\ga]\mid\ga\in{\calS_\De}\bigr\}
 =
 \bigl\{[x]\mid x\in\Pic_\k(H_\De)\bigr\},
\]
where all conjugacy classes are taken in $\DPic_\k(H_\De)$.
Indeed, the hereditary reduction allows each transported twist to be represented by an ordinary automorphism of $H_\De$.
Conversely, since $H_\De$ is basic, every element of $\Pic_\k(H_\De)$ comes from such an automorphism and is realized by taking $B=H_\De$.
We determine this set in three steps.
First, we describe the elements of $\Pic_\k(H_\De)=\Out_\k(H_\De)$ by explicit parameters.
Next, we determine their conjugacy classes within $\Pic_\k(H_\De)$.
Finally, we determine whether distinct classes obtained in this way become identified under conjugation in the larger group $\DPic_\k(H_\De)$.
This last step is necessary because the third component of the type is a conjugacy class in the derived Picard group, not merely in the Picard group.

For twists that rescale arrows, we define the scalar coordinate
$\la\in\kk$ by fixing a direction around the underlying cycle.
Take the product of the arrow scalars along this direction, using
the inverse scalar when an arrow points against it.
This product is called the \emph{holonomy}.
Reversing the chosen direction replaces $\la$ by $\la^{-1}$.
We keep the direction fixed when comparing parameters.
{The cases $p>q=1$ (the \emph{bypass case}) and
$p=q=1$ (the \emph{Kronecker case}) can also have twists
that mix paths or arrows; their parameters are described
separately below.}
\par\endgroup

\Needspace{18\baselineskip}
\begin{thm}\label{thm:table}
{For fixed $\De=\Aaff pq$ and a positive integer $n$,} the derived equivalence
classes, equivalently the stable equivalence classes, are parametrized by
\begin{center}
\renewcommand{\arraystretch}{1.35}
\begin{tabular}{@{}ll@{}}
\toprule
Type & Parameter set \\
\midrule
$p>q>1$, $d=\gcd(p,q)$ & $\kk\times{\bbZ/d\bbZ}$ \\
$p=q=m>1$ &
$\displaystyle(\kk\times{\bbZ/m\bbZ})/\!\sim\ \sqcup\{\mathsf R\}$ \\
$p>q=1$ & $\kk\sqcup\{\mathsf J\}$ \\
$p=q=1$ & $\displaystyle\kk/(\la\sim\la^{-1})\sqcup\{\mathsf J\}$ \\
\bottomrule
\end{tabular}
\end{center}
\begingroup
Here ${\bbZ/d\bbZ}$ and ${\bbZ/m\bbZ}$ record the finite rotation exponent.
{In the second row, for all $(\la,a)\in\kk\times\bbZ/m\bbZ$, there is one simultaneous identification:}
\[
 (\la,a)\sim(\la^{-1},-a).
\]
Thus the two coordinates cannot be inverted independently.
The symbol $\mathsf R$ denotes the single reflection class.
The symbol $\mathsf J$ denotes the nonidentity unipotent class,
whose representatives are given below. The identity is represented
by scalar $1$ and, when there is a rotation coordinate, exponent $0$.
The symbol $\sqcup$ means that the displayed extra class is added as
a separate point.
\par\endgroup
\end{thm}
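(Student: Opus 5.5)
By \thmref{thm:main} and the discussion opening this section, the plan is to compute the set of $\DPic_\k(H)$-conjugacy classes of elements of $\Pic_\k(H)=\Out_\k(H)$, where $H=\k Q$ is the model of \rmkref{rmk:hereditary-models}. I will do this in three stages.

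\emph{Stage 1: describe $\Out_\k(H)$.} Every automorphism of $\k Q$ induces a permutation of the vertices that lies in $\Aut(Q)$. In the models with $q>1$ there are no paths of length $\geq2$ between adjacent vertices, so modulo inner automorphisms such an automorphism rescales each arrow by a scalar. Conjugating by vertex rescalings (which are inner) removes every arrow scalar except the holonomy $\la$. This gives
\[
 \Out_\k(H)\cong\kk\rtimes\Aut(Q),
\]
where $\Aut(Q)=C_d$ for $p>q$ and $\Aut(Q)=D_{2m}$ for $p=q=m$. Rotations fix $\la$, and the reflection $s$ reverses the traversal direction, so it inverts $\la$.

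In the bypass case $p>q=1$, the single reverse arrow $\al:u\to v$ is parallel to a path $w$ of length $p$. Hence there are additional unipotent automorphisms $\al\mapsto\al+cw$. Here $\Out^0(H)$ is $\kk\ltimes\k$, and $\Aut(Q)$ is trivial because $d=1$. The nonidentity unipotent elements form a single conjugacy class $\mathsf J$, since the torus rescales $c$. Every other element is conjugate to its semisimple part with holonomy $\la$: when $\la\ne1$ it is diagonalizable, and when $\la=1$ it is either the identity or lies in $\mathsf J$. In the Kronecker case $\Out_\k(H)=\PGL_2(\k)$. Its conjugacy classes are the diagonal classes $\operatorname{diag}(\la,1)$ with $\la\sim\la^{-1}$, together with the single unipotent class $\mathsf J$.

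\emph{Stage 2: compute conjugacy inside $\Out_\k(H)$.} For $p>q>1$ the group is abelian, so the classes are exactly the pairs $(\la,r^a)\in\kk\times\bbZ/d\bbZ$. For $p=q=m$, conjugating $(\la,r^a)$ by $s$ gives $(\la^{-1},r^{-a})$; this is the simultaneous identification in the statement. Conjugation by elements of $\kk$ does not change the torus part of a rotation element. Every element of the form $(\mu,r^as)$ is a reflection. I will show these form one class. First, $(\mu,s)$ is conjugate to $(1,s)$ by $t\in\kk$, because $t(\mu,s)t^{-1}=(\mu t^2,s)$ and $\k$ is algebraically closed, so $t$ with $t^2=\mu^{-1}$ exists. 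Second, the reflections $r^as$ are conjugate in $D_{2m}$ up to the parity of $a$. The remaining step is to check that the two parity classes (reflections through vertices versus through edges) are identified. Since $s$ was chosen to fix a vertex and to preserve sources and sinks, I expect the orientation condition on $Q$ to force every quiver reflection of the alternating cycle to fix a vertex, which leaves a single class $\mathsf R$. This is the point to verify with care.

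\emph{Stage 3: show that no further fusion occurs in $\DPic_\k(H)$.} This is the main obstacle. By \prpref{prop:MY}, $\DPic_\k(H)\cong\Out_\k(\calB)\times\bbZ$ with the shift factor central, so I only need to compare conjugacy under $\Out_\k(\calB)$. The group $\Out_\k(\calB)$ is an extension of $\Out^0(H)$ by $\Aut(\bbZ Q,\ta)$, described in the proof of \lemref{lem:finite}.

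First, the finite part can only be conjugated by graph symmetries. Since $L(p,q)$ is abelian and the height map $c$ is invariant under conjugation, the rotation exponent $a$ is unchanged by $L(p,q)$, and only $s$ acts, exactly as in Stage 2. Second, I must show that the holonomy $\la$ is an invariant under conjugation by elements such as $\ta$ or $\ro$. For this I will read $\la$ intrinsically on $\calB$, as the scalar by which the twist acts on a cycle of irreducible maps in the mesh category that goes once around a $\ta$-orbit of the cycle. Comparing with the mesh relations, $\ta$ acts on irreducible maps compatibly with the given orientation, so conjugation by $\ta$ preserves this scalar. Orientation reversal occurs only through $s$, and $s$ inverts the rotation at the same time.

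In the bypass and Kronecker cases I will use the same argument. For the Kronecker quiver I also need the explicit structure $\bbZ^2\times\PGL_2(\k)$ from \eqref{eq:kronecker}. It is a direct product, so $\PGL_2$-conjugacy classes are not fused by the $\bbZ^2$ factor. The unipotent class stays separate from the diagonal ones because conjugation preserves both the Jordan type and the eigenvalue ratio up to inversion.
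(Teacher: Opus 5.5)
\textbf{Gap in the balanced case.} Your treatment of $p=q=m>1$ breaks down at the reflection twists $\mu r^as$. You correctly note that in $D_{2m}=\langle r,s\rangle$ the reflections $r^as$ are conjugate only up to the parity of $a$ when $m$ is even. For odd $m$ the two parity classes coincide already in $D_{2m}$, so the problem is confined to even $m$.

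It is true that every quiver reflection of the alternating $2m$-cycle fixes a vertex. However, this does not merge the two parity classes. The picture ``through vertices versus through edges'' applies only to the $m$-gon formed by the sources. On the $2m$-cycle with $m$ even, $s$ fixes two sources while $rs$ fixes two sinks, and these are not conjugate in $\Aut(Q)$. For $m=2$ this is immediate, since $\Aut(Q)\cong C_2\times C_2$ is abelian.

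Conjugation by the torus $\kk$ leaves $a$ unchanged, and conjugation by $r^b$ or by $s$ sends $a$ to $a+2b$ or $-a$. Hence $\mu s$ and $\mu' rs$ are never conjugate in $\Out_\k(H)\cong\kk\rtimes D_{2m}$. Stage~2 therefore yields two reflection classes for even $m$, not one. The claim in Stage~3 that no further fusion occurs in $\DPic_\k(H)$ is then false in exactly this case.

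\textbf{The missing idea.} The two reflection classes are fused by a derived autoequivalence that is not induced by any algebra automorphism. This is $x=\ro\in L(m,m)$, whose height shift is $c(\ro)=-1$, so it does not preserve the slice and does not come from $\Pic_\k(H)$. In Stage~3 you let $L(m,m)$ act only on the rotation twists $\la r^a$. Let it act on the reflection twists as well. From $sxs^{-1}=y$ you get $xsx^{-1}=xy^{-1}s=rs$. Since $\kk\times L(m,m)$ is abelian, this gives
\[
 x(\la r^as)x^{-1}=\la r^{a+1}s .
\]
Combine this with $t(\la r^as)t^{-1}=t^2\la r^as$, choosing $t$ with $t^2=\la^{-1}$. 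Then every reflection twist is conjugate to $s$ in $\DPic_\k(H)$, giving the single class $\mathsf R$. This is how the paper proves this part of the second row.

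\textbf{A lesser point on the bypass case.} Your ``same argument'' for Stage~3 when $p>q=1$ is only a sketch. There, arrows of $\bbZ Q$ are parallel to longer paths. So object-fixing automorphisms of $\calB$ are not just arrow rescalings, and reading the holonomy intrinsically needs its own justification.

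The paper avoids this with Yekutieli's theorem. Conjugation by any $F\in\DPic_\k(H)$ is an automorphism of the algebraic group $\Out^0(H)=\Pic_\k(H)\cong\Aff_1(\k)$. It preserves the commutator subgroup $U\cong\mathbb A^1_\k$ and acts on it by $u\mapsto cu$ with $c\neq0$. Hence it preserves the first coordinate $a$, and so preserves each of the classes $(a,0)$ and $(1,1)$.
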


We prove the four rows of \thmref{thm:table} in the
following four subsections, respectively.
For the first row, we first compute the group in
\lemref{lem:meshgroup}.

\subsection{The mesh group when \texorpdfstring{$p,q>1$}{p,q>1}}

Let $H=\k Q$ be the chosen hereditary algebra.
As in \prpref{prop:MY}, let $\calB$ be the full
subcategory of $\Db(\mod H)$ obtained by choosing
one representative of each isomorphism class of
indecomposable objects in the transjective component
containing the projective $H$-modules.

\begin{lem}\label{lem:meshgroup}
If $p,q>1$, the full component automorphism group is
\begin{equation}\label{eq:meshgroup}
 \Out_\k(\calB)\mathrel{{\cong}}
 \begin{cases}
 \kk\times L(p,q),&p>q,\\
 (\kk\times L(m,m))\rtimes\langle s\rangle,&p=q=m,
 \end{cases}
\end{equation}
{where, in the second line, $s$ acts by $s\la s^{-1}=\la^{-1}$ for every $\la\in\kk$, $sxs^{-1}=y$,
and $sys^{-1}=x$. Here $\kk$ is identified with the first factor, and $x,y$ are the generators of $L(m,m)$ in
\eqref{eq:lattice}.}
\end{lem}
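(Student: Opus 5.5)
Via the equivalence $\calM_Q\simeq\calB$ of \prpref{prop:MY}, I will compute $\Out_\k(\calM_Q)$, the group of $\k$-linear autoequivalences of the mesh category modulo natural isomorphism. This group sits in an exact sequence
\[
 1\longrightarrow K\longrightarrow\Out_\k(\calM_Q)\longrightarrow\Aut(\bbZ Q,\ta)\longrightarrow 1 .
\]
The right-hand map sends an autoequivalence to its induced automorphism of the translation quiver; an equivalence must preserve irreducible morphisms and AR translates, so it commutes with $\ta$. The kernel $K$ consists of classes of autoequivalences that fix every vertex.

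\textbf{Step 1: surjectivity.} For $p,q>1$ there are no multiple arrows in $\bbZ Q$. Every translation-quiver automorphism $f$ commuting with $\ta$ therefore induces an automorphism of the path category $\k\bbZ Q$, sending each arrow to its image. It maps each mesh relation, which is a signed sum of paths through the mesh, to the corresponding mesh relation, so it descends to $\calM_Q$. This gives a section of the map, which is even a group homomorphism $\Aut(\bbZ Q,\ta)\to\Out_\k(\calM_Q)$. By the proof of \lemref{lem:finite}, $\Aut(\bbZ Q,\ta)$ is $L(p,q)$ when $p>q$ and $L(m,m)\rtimes\langle s\rangle$ when $p=q=m$.

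\textbf{Step 2: the kernel is $\kk$ (main obstacle).} An autoequivalence fixing all vertices sends each arrow $\al$ to $c_\al\al$ plus higher terms. Since arrows span the radical modulo radical squared and paths of length $\geq2$ between adjacent vertices vanish in $\calM_Q$, there are no higher terms, so $c_\al\in\kk$. Preserving the mesh relations forces, at each mesh, equal products $c_{\al}c_{\si\al}$ along all paths through that mesh. Modulo natural isomorphisms, which rescale by vertex scalars $c_x$, I must compute the quotient of these scaling data. This is a cohomology computation: $H^1$ of the graph $\bbZ Q$ with the mesh constraints. The underlying space of $\bbZ Q$ modulo mesh $2$-cells is a cylinder, since the $\ta$-orbit quotient is the cycle. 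Hence the invariant is exactly one scalar, the holonomy around a lifted loop $x\to\cdots\to\ta^{-q}x$ compared with $\ta$. I will check concretely that two cocycles with the same holonomy differ by a vertex coboundary, working slice by slice using $\ta$-commutation. I will also check that every $\la\in\kk$ is realized, by scaling one arrow of $Q$ and its translates. The care here is to verify that the mesh constraints impose nothing beyond "$\ta$-compatibility up to coboundary" and do not kill the holonomy.

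\textbf{Step 3: the extension structure.} The section from Step 1 splits the sequence. It remains to compute the conjugation action of $\Aut(\bbZ Q,\ta)$ on $K=\kk$. The rotation $\ro$ and $\ta$ preserve the direction of traversal around the cylinder, so they fix the holonomy; this gives the direct product $\kk\times L(p,q)$ when $p>q$. When $p=q=m$, the reflection $s$ reverses the cyclic direction, so it conjugates $\la$ to $\la^{-1}$. The relations $sxs^{-1}=y$ and $sys^{-1}=x$ hold in the image group by \lemref{lem:finite}. Together these give $(\kk\times L(m,m))\rtimes\langle s\rangle$, which is \eqref{eq:meshgroup}.
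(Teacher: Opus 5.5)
Your outline matches the paper's proof. You identify the kernel of $\Out_\k(\calB)\to\Aut(\bbZ Q,\ta)$ with multiplicative cocycles on the mesh squares modulo vertex rescalings, i.e.\ $H^1$ of a cylinder, giving the holonomy in $\kk$. You split the sequence by sending arrows to arrows, and observe that translations fix the holonomy while $s$ inverts it.

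One step is not justified as written, and it is exactly where $q>1$ is needed. Your proposal invokes $p,q>1$ only for the absence of multiple arrows, which already holds for $p+q\geq3$. In Step~2 you say there are no higher terms because paths of length $\geq2$ between adjacent vertices ``vanish in $\calM_Q$''. Mesh relations do not kill such paths: if one existed it would survive in $\calM_Q$. This is what happens in the bypass case $q=1$. There the length-$p$ path $\al_p\cdots\al_1$ is parallel to the arrow $b$ and is nonzero, and it produces the extra automorphisms $b\mapsto b+v\,\al_p\cdots\al_1$. What is true for $p,q>1$ is that no such path exists. The paper proves this with the model of $\bbZ Q$ as the grid $\bbZ^2$, with arrows $z\to z+(1,0)$ and $z\to z+(0,1)$ and unit squares as meshes, taken modulo the translation $(p,-q)$. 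A second path with the endpoints of an arrow would have step counts $(1,0)+k(p,-q)$ or $(0,1)+k(p,-q)$ with $k\neq0$ and both entries nonnegative. This is impossible when $p,q\geq2$. Only this gives $\calM_Q(x,y)=\k\ga$ for every arrow $\ga:x\to y$, so that an object-fixing automorphism merely rescales arrows.

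There is also a smaller slip in your realization check. Take an arrow $\be:u\to w$ of $Q$, rescale it and its $\ta$-translates $(i,u)\to(i,w)$ by $\la$, and fix all other arrows. This violates the mesh ending at $(i+1,u)$, whose two paths acquire the factors $\la$ and $1$. You must also rescale the partner arrows $(i,w)\to(i+1,u)$ by $\la^{-1}$. Equivalently, rescale every arrow crossing one cut of the cylinder by $\la^{\pm1}$ according to its direction. In any case, the cellular computation $H^1(\text{cylinder};\kk)\cong\kk$ that you already invoke gives both injectivity and surjectivity of the holonomy. It does not need any $\ta$-compatibility of the cocycles.
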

\begin{proof}
{The group $\Aut(\bbZ Q,\ta)$ was computed in \lemref{lem:finite}. We describe the kernel of the
homomorphism from $\Out_\k(\calB)$ to this group induced by the action on objects, and a splitting of this
homomorphism. Use the grid with vertices $(a,b)\in\bbZ^2$ and arrows $(a,b)\to(a+1,b)$ and $(a,b)\to(a,b+1)$
for all $a,b\in\bbZ$. Its quotient by the translation $(a,b)\mapsto(a+p,b-q)$ is a model of $\bbZ Q$; the
elementary squares give its meshes.} For $p,q>1$ there is no path of length greater than one with the
same endpoints as an arrow. Indeed a second path would change the two
nonnegative step counts by a nonzero multiple of $(p,-q)$, starting
from $(1,0)$ or $(0,1)$, which is impossible with both $p,q>1$.
Consequently an object-fixing mesh automorphism only rescales arrows.

Preserving each mesh relation says that the products of the rescaling
factors along its two sides are equal. {We call such scalar assignments
\emph{multiplicative cocycles}. Filling the elementary squares gives a square complex
whose universal cover is the grid and whose periodic quotient is a cylinder. Modulo
rescaling at vertices,} the remaining datum is its single multiplicative
holonomy, an element of $\kk$. Equivalently, one can propagate a
vertex rescaling along a spanning tree and use the mesh squares to
leave exactly the product around the cycle.

There is a splitting on the vertex group: send every arrow to the
arrow with the transported endpoints. There are no parallel arrows
in this case. {The \emph{polarization} pairs each arrow $u\to v$ with the corresponding arrow $\ta
v\to u$ in the mesh at $v$. These quiver actions preserve this pairing and the mesh sums} and satisfy the same
group relations as their
vertex permutations. Translations preserve the holonomy, whereas a
reversal inverts it. This proves \eqref{eq:meshgroup}.
\end{proof}

For the periodic unbalanced orientation, every ordinary twist modulo
inner automorphisms is $\la r^a$, {with $\la\in\kk$ and $a\in\bbZ/d\bbZ$. Here $\la$ denotes the
scalar-group element with holonomy $\la$.}
The group in the first line of \eqref{eq:meshgroup} is abelian.
The extra shift factor in \eqref{eq:mesh} is central. Thus two such
twists are derived conjugate if and only if both $\la$ and $a$
are equal. This proves the first row of \thmref{thm:table}.

\subsection{Balanced type and simultaneous conjugacy}

Let $p=q=m>1$. Ordinary twists on the alternating model {of
\rmkref{rmk:hereditary-models}} have the
forms $\la r^a$ and $\la r^as$, {with $\la\in\kk$ and $a\in\bbZ/m\bbZ$.} Here $r=xy^{-1}$ has order
$m$, and
\begin{equation}\label{eq:reflectionmerge}
 xsx^{-1}=rs.
\end{equation}
\begingroup
First consider twists of the form $\la r^a$.
Conjugation by the abelian normal subgroup fixes them.
The reflection $s$ reverses both the cycle direction and the rotation:
\[
 s\la s^{-1}=\la^{-1},\qquad
 srs^{-1}=r^{-1},\qquad
 s(\la r^a)s^{-1}=\la^{-1}r^{-a}.
\]
Hence the same conjugation changes both coordinates.
\begin{center}
\begin{tikzpicture}[>=stealth, every node/.style={}]
\node (left) at (0,0) {$\la r^a$};
\node (right) at (5,0) {$\la^{-1}r^{-a}$};
\draw[<->] (left) -- node[above] {conjugation by $s$} (right);
\node at (0,-0.7) {$(\la,a)$};
\node at (5,-0.7) {$(\la^{-1},-a)$};
\end{tikzpicture}
\end{center}
These are exactly the identifications in the second row of the table.
\par\endgroup

\begingroup
Next consider twists of the form $\la r^as$. 
Since $s$ reverses the direction around the cycle, we have
$sts^{-1}=t^{-1}$ for every $t\in\k^\times$.
Hence
\[
 t(\la r^a s)t^{-1}
 =t\la r^a(st^{-1})
 =t\la r^a t s
 =t^2\la r^a s.
\]
Choose {$t\in\k^\times$} with $t^2=\la^{-1}$. Such a nonzero $t$ exists because
$\k$ is algebraically closed, also in characteristic two.
This makes the scalar equal to $1$.
Relation~\eqref{eq:reflectionmerge} then changes $a$ by one,
so every exponent can be reduced to zero.
All reflection twists therefore belong to the one class $\mathsf R$.
This proves the second row.
\par\endgroup

\begin{exm}
For $m=3$ over $\bbC$, we show directly that
$2r$ and $2r^2$ are not conjugate in $\DPic_\k(H)$.
Put $A=\k^\times\times L(3,3)$.
By \prpref{prop:MY} and \lemref{lem:meshgroup},
we have
\[
 \DPic_\k(H)\cong
 (A\rtimes\langle s\rangle)\times\bbZ.
\]
The shift factor $\bbZ$ is central, so it does not
affect conjugation.
Every element of $A\rtimes\langle s\rangle$ has
the form $h$ or $hs$ for some $h\in A$.
Since $A$ is abelian, {for all $h\in A$, $\la\in\kk$, and $a\in\bbZ/3\bbZ$,} we have
\[
 h(\la r^a)h^{-1}=\la r^a
\]
and
\[
 \begin{aligned}
 (hs)(\la r^a)(hs)^{-1}
 &=h\bigl(s(\la r^a)s^{-1}\bigr)h^{-1}\\
 &=h(\la^{-1}r^{-a})h^{-1}\\
 &=\la^{-1}r^{-a}.
 \end{aligned}
\]
Thus the conjugacy classes of $2r$ and $2r^2$
in $\DPic_\k(H)$ are, respectively,
\[
 \{2r,(1/2)r^2\},\qquad
 \{2r^2,(1/2)r\}.
\]
Here we use $r^3=1$.
Since $r$ has order $3$ and $2\neq1/2$ in $\bbC$,
these two conjugacy classes are distinct.
In terms of the parameters, they are
\[
 \{(2,1),(1/2,2)\},\qquad
 \{(2,2),(1/2,1)\}.
\]
However, the graph parts $r$ and $r^2$ are conjugate
by $s$, and the scalar parts of $2r$ and $2r^2$
are both $2$.
Thus recording the graph conjugacy class and the
scalar modulo inversion separately does not
distinguish these two conjugacy classes in
$\DPic_\k(H)$.
The reason is that conjugation by $s$ inverts
both parts simultaneously:
\[
 s(\la r^a)s^{-1}=\la^{-1}r^{-a}.
\]
Therefore these two separate data do not form
a complete invariant.
\end{exm}

The graph component corresponds to the third component of
\cite[Definition~8.2]{Asa}, and is given by a conjugacy class.
Its order alone is insufficient. In the balanced case, the graph
component and the scalar are transformed by the same conjugation,
as the example above shows.

\subsection{The bypass case \texorpdfstring{$p>q=1$}{p>q=1}}

\begingroup
Let $H=\k Q$ be the path category of the quiver $Q$
with objects $0,1,\ldots,p$, arrows
$\al_i:i-1\to i$ {for every $i\in\{1,\ldots,p\}$,}
and an additional arrow $b:0\to p$.
Let $P$ be the path from $0$ to $p$ consisting of
$\al_1,\ldots,\al_p$.
Thus $H(0,p)$ has basis $b,P$.
We first determine the conjugacy classes in
$\Pic_\k(H)$, and then show that distinct such classes
remain distinct in $\DPic_\k(H)$.

\medskip
\setcounter{proofstep}{0}
\noindent\textbf{\proofstep{step:bypass-aut} Automorphisms of the category and their conjugacy classes.}
{Since $H(i,j)\neq0$ if and only if $i\leq j$ for all $i,j\in\{0,\ldots,p\}$,}
every automorphism of $H$ fixes every object.
First consider any $\k$-linear endomorphism $\ph$ of $H$ that fixes every object, without assuming that $\ph$ is invertible.
{For each $i\in\{1,\ldots,p\}$, since $H(i-1,i)=\k\al_i$, there is a scalar $c_i\in\k$ such that}
\[
 \ph(\al_i)=c_i\al_i.
\]
Similarly, since $b$ and $P$ form a basis of $H(0,p)$, there are scalars $d,e\in\k$ such that
\[
 \ph(b)=db+eP.
\]
Since $\ph$ preserves composition, the first formula also gives
\[
 \ph(P)=(c_1\cdots c_p)P.
\]
Now suppose that $\ph$ is an automorphism.
Its restriction to each one-dimensional space $H(i-1,i)$ is invertible, so $c_i\neq0$ {for every $i\in\{1,\ldots,p\}$.}
Its restriction to $H(0,p)$ is also invertible.
If $d=0$, then both $\ph(b)$ and $\ph(P)$ would belong to the one-dimensional subspace $\k P$, contradicting this invertibility.
Hence $d\neq0$ as well.
Thus, for an automorphism, the coefficients satisfy $c_i,d\in\k^\times$ and $e\in\k$.

We normalize $\ph$ up to natural isomorphism.
{For each $i\in\{0,\ldots,p\}$, choose $t_i\in\k^\times$. Define an automorphism $\ps$ of $H$ by}
\[
 \ps(i)=i,\qquad
 \ps(f)=\frac{t_j}{t_i}\ph(f).
\]
{These assignments are made for all $i,j\in\{0,\ldots,p\}$ and $f\in H(i,j)$.}
{For each $i\in\{0,\ldots,p\}$, the morphism}
\[
 \eta_i=t_i1_i:\ph(i)\longrightarrow\ps(i)
\]
{is a component of a natural isomorphism $\eta:\ph\Longrightarrow\ps$.}
{Indeed, for all $i,j\in\{0,\ldots,p\}$ and every $f\in H(i,j)$, we have}
\[
 \ps(f)\eta_i
 =\frac{t_j}{t_i}\ph(f)t_i1_i
 =t_j1_j\ph(f)
 =\eta_j\ph(f),
\]
which is precisely the naturality condition.

For each $a\in\k^\times$ and $v\in\k$, let $\ph_{a,v}$ be the automorphism fixing all objects and given on arrows by
\[
 \begin{aligned}
 \ph_{a,v}(\al_1)&=a\al_1,\\
 \ph_{a,v}(\al_i)&=\al_i\quad(2\leq i\leq p),\\
 \ph_{a,v}(b)&=b+vP.
 \end{aligned}
\]
In particular, $\ph_{a,v}(P)=aP$.
We now choose the scalars $t_i$ so that $\ps$ has one of these normal forms.
To ensure that $\ps(\al_i)=\al_i$ for $2\leq i\leq p$ and that the coefficient of $b$ in $\ps(b)$ is $1$, we require
\[
 \frac{t_i}{t_{i-1}}=c_i^{-1}
 \quad(2\leq i\leq p),
 \qquad
 \frac{t_p}{t_0}=d^{-1}.
\]
These conditions can be satisfied by setting $t_0=1$, $t_p=d^{-1}$,
and defining $t_{i-1}=c_i t_i$ successively
for $i=p,p-1,\ldots,2$.
With this choice, $\ps=\ph_{a,v}$, where
\[
 a=\frac{c_1\cdots c_p}{d},
 \qquad
 v=\frac{e}{d}.
\]
Here $d$ and $e$ are the coefficients in $\ph(b)=db+eP$.
Thus every automorphism of $H$ is naturally isomorphic
to one of these normalized automorphisms.

The parameters of a normalized automorphism are
uniquely determined by its natural-isomorphism class.
{Take any $(a,v),(a',v')\in\k^\times\times\k$. Suppose that}
\[
 \eta:\ph_{a,v}\Longrightarrow\ph_{a',v'}
\]
is a natural isomorphism.
{Since $H(i,i)=\k1_i$ for every $i\in\{0,\ldots,p\}$, for each such $i$ there is $t_i\in\k^\times$ with $\eta_i=t_i1_i$.}
Naturality for $b$ gives
\[
 t_0(b+v'P)=t_p(b+vP).
\]
Comparing the coefficients of $b$ gives $t_0=t_p$,
and comparing the coefficients of $P$ then gives
$v=v'$.
Naturality for $\al_i$, with $2\leq i\leq p$, gives
$t_{i-1}=t_i$.
Hence
\[
 t_0=t_1=\cdots=t_p.
\]
Finally, naturality for $\al_1$ gives
\[
 a't_0=at_1,
\]
and therefore $a'=a$.
Conversely, equal parameters give the same automorphism.
This proves the uniqueness of the parameters.

We write $(a,v)$ for the element of $\Pic_\k(H)$
represented by $\ph_{a,v}$.
{For all $(a,v),(a',v')\in\k^\times\times\k$, composition and inversion are given by}
\[
 \begin{aligned}
 (a,v)(a',v')&=(aa',v+av'),\\
 (a,v)^{-1}&=(a^{-1},-a^{-1}v).
 \end{aligned}
\]
This is also the multiplication law for the affine
transformations {of $\k$} given by $z\mapsto az+v$.
Their group is denoted by $\Aff_1(\k)$.
Thus
\[
 \Pic_\k(H)\cong\Aff_1(\k).
\]
This description agrees with
\cite[Theorem~4.5(2)]{MY}.

{For all $(a,v),(c,w)\in\Aff_1(\k)$, we obtain}
\begin{equation}\label{eq:affconj}
 (c,w)(a,v)(c,w)^{-1}
 =(a,cv+(1-a)w).
\end{equation}
{The following formulas hold for all $a\in\k^\times$ and $v\in\k$, subject to the indicated
conditions, and give a representative of each conjugacy class:}
\begin{equation}\label{eq:affconj-representatives}
 \begin{aligned}
 &(1,-v/(1-a))(a,v)(1,v/(1-a))=(a,0)
 &&(a\neq1),\\
 &(1,0)=\id,\\
 &(v^{-1},0)(1,v)(v,0)=(1,1)
 &&(v\neq0).
 \end{aligned}
\end{equation}
Conjugation preserves $a$, and the identity cannot be
conjugate to a nonidentity element.
Therefore a complete set of representatives of the
conjugacy classes in $\Pic_\k(H)$ is
\[
 (a,0)\quad(a\in\k^\times),\qquad (1,1).
\]
The class represented by $(1,1)$ is denoted by $\mathsf J$.
Its representative fixes every $\al_i$ and sends $b$ to $b+P$.

\medskip
\noindent\textbf{\proofstep{step:bypass-derived} Conjugation in the derived Picard group.}
It remains to show that conjugation in $\DPic_\k(H)$
gives no further identifications among the conjugacy
classes in $\Pic_\k(H)$.
Our strategy is to use the algebraic group structure
to control this conjugation action.
More precisely, we will show that its restriction
to the translation subgroup
$U=\{(1,u)\mid u\in\k\}$ is multiplication
by a nonzero scalar.
{The conjugation relation between $G=\Pic_\k(H)$ and $U$} will then
force the first coordinate $a$ to be preserved.
Together with \eqref{eq:affconj-representatives},
this will imply that each conjugacy class is preserved.

For this purpose, we use \cite[Theorem~1]{Yek}, which
states that conjugation by an element of $\DPic_\k(H)$
restricts to an automorphism of the algebraic group
$\Out^0(H)$.
To apply this theorem to the whole group
$G=\Pic_\k(H)$, we first show that
$\Pic_\k(H)=\Out^0(H)$.

Since $H$ is basic, we may make the identification
$\Pic_\k(H)=\Out_\k(H)$; see
Appendix~\ref{app:standard-facts}.
The normalization in {(\ref{step:bypass-aut})} gives an isomorphism
\[
 \Out_\k(H)\cong\Aff_1(\k)
\]
also as algebraic groups.
Indeed, the normalized parameters
\[
 a=\frac{c_1\cdots c_p}{d},
 \qquad v=\frac{e}{d}
\]
are regular functions on the group of object-fixing
automorphisms, since $d\neq0$.
They are constant on natural-isomorphism classes.
The inverse parametrization is induced by the regular
family of normalized automorphisms
$(a,v)\mapsto\ph_{a,v}$.
Thus the underlying variety of $\Out_\k(H)$ is
$\k^\times\times\k$, which is connected.
Consequently,
\[
 \Out^0(H)=\Out_\k(H)=\Pic_\k(H).
\]

Put $G=\Aff_1(\k)$, identified with this subgroup of
$\DPic_\k(H)$, and put
\[
 U=\{(1,u)\mid u\in\k\}.
\]
The group $U$ consists of the translations $z\mapsto z+u$.

First we show that $U$ is the commutator subgroup of $G$.
The map $G\to\k^\times$, $(a,v)\mapsto a$, is a group
homomorphism with kernel $U$, where
$\k^\times$ is abelian. Hence every commutator belongs to $U$.
Conversely, choose $c_0\in\k^\times$ with $c_0\neq1$;
such an element exists because $\k$ is algebraically closed.
For every $w\in\k$, we have
\[
 (c_0,0)(1,w)(c_0,0)^{-1}(1,w)^{-1}
 =(1,(c_0-1)w).
\]
For every $u\in\k$, taking $w=u/(c_0-1)$ expresses
$(1,u)$ as a commutator.
This proves the assertion.
In particular, every group automorphism of $G$ preserves $U$.

Now take any $F\in\DPic_\k(H)$.
By \cite[Theorem~1]{Yek}, conjugation by $F$ induces
an automorphism of algebraic groups
\[
 \th:G\to G,\qquad g\longmapsto FgF^{-1},
\]
where we use $G\cong\Out^0(H)$.
We use the algebraic nature of $\th$ only in the
following step.
Since $\th(U)=U$, there is a bijection $f:\k\to\k$
such that
\[
 \th(1,u)=(1,f(u))\quad\text{for every }u\in\k.
\]
The coordinate $u$ identifies $U$ with the affine line.
Here we use the fact that $\th$ is an automorphism
of algebraic groups, not merely of abstract groups.
Since $U\cong\mathbb A^1_\k$ and both $\th$ and
$\th^{-1}$ are morphisms of algebraic varieties,
the induced map $f$ and its inverse are polynomial maps.
Their composite is the identity polynomial, so
\[
 \deg(f)\deg(f^{-1})=1.
\]
Thus $f$ has degree one.
Since $\th$ preserves the identity, $f(0)=0$.
{There is therefore a scalar $c\in\k^\times$ such that $f(u)=cu$ for every $u\in\k$.}
Consequently, %
\[
 \th(1,u)=(1,cu)\quad\text{for every }u\in\k.
\]
This argument is valid in every characteristic.

Let $g=(a,v)\in G$, and write $\th(g)=(a',v')$.
For every $u\in\k$, direct calculation gives
\[
 g(1,u)g^{-1}=(1,au).
\]
Applying $\th$ to both sides, we obtain
\[
 \begin{aligned}
 (1,cau)
 &=\th\bigl(g(1,u)g^{-1}\bigr)\\
 &=\th(g)(1,cu)\th(g)^{-1}\\
 &=(1,a'cu).
 \end{aligned}
\]
Taking $u=1$ and using $c\neq0$, we obtain $a=a'$.
Thus $\th$ preserves the first coordinate.
We now apply \eqref{eq:affconj-representatives}
to the images of the representatives found in {(\ref{step:bypass-aut})}.
\begin{itemize}
\item
{For $a\in\k^\times\setminus\{1\}$, write $\th(a,0)=(a,v')$.}
The first formula shows that $\th(a,0)$ is conjugate
in $G$ to $(a,0)$.
\item
Since $\th$ is a group automorphism,
$\th(1,0)=(1,0)$.
\item
We have $\th(1,1)=(1,c)$ with $c\neq0$.
The third formula, applied with $v=c$, shows that
$\th(1,1)$ is conjugate in $G$ to $(1,1)$.
\end{itemize}
Thus, in each of the three cases, the image under $\th$
of the chosen representative is conjugate in $G$
to that representative.
Hence $\th$ sends each conjugacy class in
$G=\Pic_\k(H)$ to itself.
Since $F\in\DPic_\k(H)$ was arbitrary and
$\th(g)=FgF^{-1}$, conjugation in $\DPic_\k(H)$
gives no further identifications among these classes.
Therefore the representatives found in {(\ref{step:bypass-aut})} give exactly
the conjugacy classes in $\DPic_\k(H)$ that meet
$\Pic_\k(H)$.
This proves the third row of \thmref{thm:table}.
\par\endgroup

\subsection{The Kronecker case}

Under the decomposition in \eqref{eq:kronecker},
$\Pic_\k(H)$ corresponds to
$\{0\}\times\PGL_2(\k)$.
For any $z\in\bbZ^2$ and $h,x\in\PGL_2(\k)$, we have
\[
 (z,h)(0,x)(z,h)^{-1}=(0,hxh^{-1}).
\]
Thus conjugation in $\DPic_\k(H)$ gives no further
identifications among elements of $\Pic_\k(H)$ beyond
conjugation in $\Pic_\k(H)$ itself.
It therefore suffices to determine the conjugacy classes
in $\PGL_2(\k)$.

{For each $A\in\GL_2(\k)$, write $[A]$ for its image in $\PGL_2(\k)$. For any $A,B\in\GL_2(\k)$, the
elements $[A]$ and $[B]$ are conjugate if and only if}
$B=cPAP^{-1}$ for some $P\in\GL_2(\k)$ and
$c\in\k^\times$.
Since $\k$ is algebraically closed, Jordan normal form
gives the following two cases.

\smallskip
\setcounter{proofstep}{0}
\noindent\textbf{\proofstep{case:Kronecker-diagonal} Diagonalizable matrices.}
Let $\al,\beta\in\k^\times$ be the eigenvalues of $A$.
By conjugation and multiplication by $\beta^{-1}$,
we obtain the representative
\[
 D_\la=\begin{bmatrix}\la&0\\0&1\end{bmatrix},
 \qquad \la=\al/\beta\in\k^\times.
\]
Conjugation preserves the unordered pair of eigenvalues,
and multiplication by a nonzero scalar preserves their ratio.
{Thus, for all $\la,\mu\in\k^\times$, $[D_\la]$ and $[D_\mu]$ are conjugate if and only if}
$\mu=\la$ or $\mu=\la^{-1}$, the latter possibility
corresponding to interchanging the two eigenvalues.
The value $\la=1$ gives the identity class.

\smallskip
\noindent\textbf{\proofstep{case:Kronecker-Jordan} Non-diagonalizable matrices.}
In this case, $A$ is conjugate to a single Jordan block
with a nonzero eigenvalue.
Multiplication by a nonzero scalar followed by conjugation
gives the representative
\[
 J=\begin{bmatrix}1&1\\0&1\end{bmatrix}.
\]
Thus all non-diagonalizable matrices represent a single
conjugacy class in $\PGL_2(\k)$, namely the nontrivial
unipotent class represented by $J$.

\smallskip
Since diagonalizability is preserved by conjugation and
multiplication by a nonzero scalar, the class in {(\ref{case:Kronecker-Jordan})}
is distinct from every class in {(\ref{case:Kronecker-diagonal})}.
The two cases exhaust all invertible $2\times2$ matrices,
so the classes listed above exhaust all conjugacy classes
in $\PGL_2(\k)$.
This classification is valid in arbitrary characteristic
and proves the last row of \thmref{thm:table}.

\section{The finite parameters in the affine tree types}
\label{sec:treetable}
We now list all possible third components of the type
for the affine tree types.
These are conjugacy classes in $\DPic_\k(H_\De)$,
as in \dfnref{def:type}.
By Section~\ref{sec:tree}, they are parametrized by
those conjugacy classes in $\Aut(\De)$ that have
representatives in $\Aut_0(\De)$, via
\[
 \ovl\si\longmapsto
 [\ga_{H_\De,\k\si}].
\]
The table lists representatives $\si$ of these graph
conjugacy classes.
The last column gives the number of corresponding
realizable conjugacy classes in $\DPic_\k(H_\De)$.

\begin{center}
\renewcommand{\arraystretch}{1.25}
\begin{tabular}{@{}lll@{}}
\toprule
\shortstack{Type\\$\De$}
& \shortstack{Representatives of realizable\\graph classes: $\si$}
& \shortstack{Number of\\classes} \\
\midrule
$\widetilde D_4$ & $1,(12),(12)(34),(123),(1234)$ in $S_4$ & $5$ \\
$\widetilde D_r$, even $r\geq6$ & $1,a,ab,s,as$ & $5$ \\
$\widetilde D_r$, odd $r\geq5$ & $1,a,ab$ & $3$ \\
$\widetilde E_6$ & $1,(12),(123)$ in $S_3$ & $3$ \\
$\widetilde E_7$ & $1$, the nontrivial symmetry & $2$ \\
$\widetilde E_8$ & $1$ & $1$ \\
\bottomrule
\end{tabular}
\end{center}

Here, for $\widetilde D_r$ with $r\geq5$, $a$ and $b$ exchange the
two terminal leaves at the two respective
forks, and $s$ interchanges
the forks. {Here the forks are the two vertices of degree three, and the
leaves are the vertices of degree one.} The graph automorphism group has presentation
\[
 \langle a,b,s\mid a^2=b^2=s^2=1,\ ab=ba,\ sas=b\rangle
\]
and conjugacy classes
\[
 \{1\},\quad\{a,b\},\quad\{ab\},\quad
 \{s,abs\},\quad\{as,bs\}.
\]
For even $r$ there is a central vertex, fixed by the whole group.
For odd $r$ the last two classes exchange the endpoints of the central
edge and have no fixed vertex. They therefore do not occur as ordinary
twists on an admissible orientation. Nevertheless the ambient
conjugacy group remains the full graph automorphism group, so $a$ and
$b$ are identified. For $\widetilde D_4$, permutation of the four leaves
gives $S_4$. For $\widetilde E_6$, permutation of the three equal arms
gives $S_3$; the other two exceptional diagrams give the groups shown.
Each representative $\si$ in the table and each positive
integer $n$ give the type
\[
 \bigl(\De,n,[\ga_{H_\De,\k\si}]\bigr).
\]

\begin{rmk}
In the representation-finite classification, the third
component of the type is given by the torsion order.
In the present setting, however, the order of the graph
automorphism does not suffice: its conjugacy class
must be retained.
For example, in type $\widetilde D_4$, the automorphisms
$(12)$ and $(12)(34)$ both have order two but belong
to distinct conjugacy classes in $S_4$.
Both classes are realizable and, for the same positive
integer $n$, give distinct types.
\end{rmk}

\section{Generalized multifold extensions}\label{sec:generalized}

\begingroup
{Let $B$ be a finite-dimensional basic connected $\k$-algebra, and let $g\in\Aut(\widehat B)$ have nonzero integer jump.}
Replace $g$ by $g^{-1}$ if necessary, so that its jump is $n>0$.
This leaves the orbit algebra unchanged because
$\langle g\rangle=\langle g^{-1}\rangle$.
Thus $g$ moves every layer by the positive integer $n$, as in Section~2.
The notation $g$ is reserved for this automorphism of the {repetitive category $\widehat B$};
$\ph$ continues to denote an ordinary automorphism of $B$.
A general $g$ need not have the form $\widehat\ph\nu_B^n$.

To extract an ordinary automorphism, first remove the jump on the
right and then restrict to the zeroth layer. With the layer embeddings
$1^{[i]}:B\longrightarrow B^{[i]}$, put
\[
 g_0:=(1^{[0]})^{-1}g\nu_B^{-n}1^{[0]}\in\Aut(B).
\]
The order in this formula is part of the convention:
\[
 B\xrightarrow{\ 1^{[0]}\ }B^{[0]}
 \xrightarrow{\ \nu_B^{-n}\ }B^{[-n]}
 \xrightarrow{\ g\ }B^{[0]}
 \xrightarrow{\ (1^{[0]})^{-1}\ }B.
\]
For $g=\widehat\ph\nu_B^n$, this gives $g_0=\ph$.
For a general $g$, we do not assume that $g$ and $\nu_B$ commute
on morphisms. 
The definition of $g_0$ follows the convention in
\cite{AKNY}.

\begin{thm}[Asashiba--Kimura--Nakashima--Yoshiwaki]
\label{thm:normalization}
Let $B$ be piecewise hereditary. Choose the positive-jump generator
$g\in\Aut(\widehat B)$ of the cyclic action, with jump $n>0$.
For the automorphism $g_0$ defined above, there is an isomorphism
of $\bbZ$-graded categories
\begin{equation}\label{eq:normalization}
 \widehat B/\langle g\rangle\mathrel{{\cong}} T_{g_0}^{n}(B).
\end{equation}
\end{thm}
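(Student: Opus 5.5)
We aim to construct an explicit automorphism $\Phi$ of $\widehat B$ that preserves layers and satisfies
\[
 \Phi\,g\,\Phi^{-1}=\widehat{g_0}\,\nu_B^{n}.
\]
Such a $\Phi$ conjugates $\langle g\rangle$ onto $\langle\widehat{g_0}\nu_B^n\rangle$. It therefore induces an isomorphism of orbit categories, and this isomorphism respects the $\bbZ$-grading coming from the layer index.

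\textbf{Step 1: reduction to jump zero.} Put $h:=g\nu_B^{-n}$. This automorphism has jump $0$, so it preserves every layer. By definition, $g_0$ is its restriction to $B^{[0]}$. Write
\[
 h_i:=(1^{[i]})^{-1}h\,1^{[i]}\in\Aut(B)
\]
for its restriction to the $i$th layer. The action of $h$ on adjacent-layer morphisms $DB(y,x)$ is then a linear isomorphism
\[
 DB(y,x)\longrightarrow DB(h_{i+1}y,h_ix).
\]
Composition in $\widehat B$ forces this isomorphism to be compatible with the bimodule structure twisted by $(h_{i+1},h_i)$. The key structural input is that a layer-preserving automorphism of $\widehat B$ is determined by the family $(h_i)$ together with bimodule isomorphisms
\[
 DB\cong {}_{h_{i+1}}(DB)_{h_i},
\]
up to the standard dual action.

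\textbf{Step 2: relating the layers.} Two relations connect the $h_i$.
\begin{enumerate}
\item Since ${}_{\al}DB_{\be}\cong DB$ as bimodules forces $\al\be^{-1}$ to be inner up to the Nakayama twist, and since for a piecewise hereditary (hence triangular on layers) algebra the relevant automorphisms are controlled, we expect $h_{i+1}$ and $h_i$ to agree modulo inner automorphisms.
\item The resulting discrepancy can be absorbed by a natural isomorphism given by invertible scalars, or by inner units, at each object of each layer.
\end{enumerate}
We construct $\Phi$ layer by layer, starting from $\Phi|_{B^{[0]}}=\mathrm{id}$. We then extend inductively in both directions so that conjugating $h$ by $\Phi$ yields the standard lift $\widehat{g_0}$. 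The inductive step at layer $i+1$ defines $\Phi$ on $B^{[i+1]}$ by the unit or inner automorphism that corrects $h_{i+1}$ against $h_i$. It also defines $\Phi$ on the bimodule $DB$ between layers $i$ and $i+1$, so that the adjacent-layer action becomes the dual of $g_0^{-1}$.

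\textbf{Step 3: interchanging $g$ and $\nu_B$.} This is where we must use that $g$ need not commute with $\nu_B$ on morphisms. We would verify that the construction can be chosen compatible with $\nu_B^n$ itself. Concretely, $\Phi$ on layer $i+n$ should be $\nu_B^n\Phi\nu_B^{-n}$ twisted by $g$. This amounts to solving a cocycle-type equation for the correcting data along the infinite chain of layers. We expect this step to be the main obstacle. Our first attempt would be a telescoping product of the correcting units, defined consistently because each step only involves the previous layer. The bijectivity of this construction would follow from the same computation in the inverse direction.

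Once $\Phi$ is built, it maps $g$-orbits bijectively to $\widehat{g_0}\nu_B^n$-orbits and respects the morphism spaces $\Ds_r\widehat B(g^rX,Y)$, which gives \eqref{eq:normalization}. Because this is precisely the result of \cite{AKNY}, the alternative we would actually use is to cite their normalization theorem (built on their Theorem~4.4, already used in the proof of \prpref{prop:tilting}), checking only that piecewise hereditary $B$ meets its hypotheses and that their $g_0$ coincides with ours.
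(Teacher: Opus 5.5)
Your committed argument (invoking the normalization theorem of \cite{AKNY} after checking that its hypothesis is just that $B$ be piecewise hereditary, and that its $g_0$ is the one defined here) is exactly the paper's proof. The paper quotes \cite[Corollary~5.3]{AKNY}, which is obtained from the absence of nonzero oriented cycles \cite[Lemma~5.2]{AKNY} together with \cite[Corollary~4.7]{AKNY}. The explicit conjugation in your Steps~1--3 is therefore not needed. As written it would not stand on its own, since the comparison of $h_{i+1}$ with $h_i$ modulo inner automorphisms and the solvability of the recursion in Step~3 are stated only as expectations.
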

\begin{proof}
This is \cite[Corollary~5.3]{AKNY}. Its hypothesis is that $B$
is piecewise hereditary; there is no tree hypothesis in the statement.
The proof uses the absence of nonzero oriented cycles from
\cite[Lemma~5.2]{AKNY}, followed by \cite[Corollary~4.7]{AKNY}.
\end{proof}

The cited result allows any nonzero integer jump. We use its
positive-jump case, so the right-hand side is exactly the twisted
extension defined in Section~2. The automorphism $g_0$ is always
computed after choosing the positive-jump generator.
The automorphism $g_0$ is defined on morphisms as well
as on objects; we retain its full action on $B$,
not merely its permutation of the objects.

\begin{cor}\label{cor:generalized}
For generalized multifold extensions of piecewise hereditary algebras
of the types listed in formula~\eqref{eq:types}, with nonzero integer jump, choose
the positive-jump generator $g$ and let $\Ph_0$ be the standard
autoequivalence induced by $g_0$. The complete derived and stable
equivalence invariant is
\[
 \bigl(\De,n,[\operatorname{cl}(F\Ph_0F^-)]\bigr),\qquad n>0.
\]
{Here $F:\Db(\mod B)\to\Db(\mod H_\De)$ is a standard derived equivalence to the chosen hereditary
model, and $F^-$ is its chosen quasi-inverse.}
The symbol $\operatorname{cl}$ takes the natural-isomorphism class
of the autoequivalence. The brackets then take its conjugacy class
in $\DPic_\k(H_\De)$, as in Section~1.
The explicit third components are those of
\thmref{thm:table} and Section~\ref{sec:treetable}.
\end{cor}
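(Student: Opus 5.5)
The plan is to reduce \corref{cor:generalized} to \thmref{thm:main} for twisted extensions by means of the normalization \thmref{thm:normalization}. Let $\La=\widehat B/\langle g\rangle$ and $\La'=\widehat{B'}/\langle g'\rangle$ be generalized multifold extensions with nonzero integer jump. First replace each generator by its inverse if needed, so that both jumps $n,n'$ are positive; this does not change the orbit algebras, since $\langle g\rangle=\langle g^{-1}\rangle$. Then \eqref{eq:normalization} gives isomorphisms of algebras
\[
 \La\cong T_{g_0}^n(B),\qquad \La'\cong T_{g'_0}^{n'}(B').
\]
Isomorphic algebras are derived equivalent and stably equivalent. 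Hence $\La$ and $\La'$ are derived (respectively stably) equivalent if and only if $T_{g_0}^n(B)$ and $T_{g'_0}^{n'}(B')$ are.

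Next apply \thmref{thm:main} to these twisted extensions. It says that derived equivalence, stable equivalence, and the equality
\[
 \typ(T_{g_0}^n(B))=\typ(T_{g'_0}^{n'}(B'))
\]
are equivalent. By \dfnref{def:type} and \eqref{eq:gamma}, this type is the triple $(\De,n,[\ga_{B,g_0}])$, where $\ga_{B,g_0}=\operatorname{cl}(F\Ph_0F^-)$. This is exactly the triple in the corollary. Section~1 shows that its conjugacy class does not depend on the choice of $F$ or $F^-$, so the invariant is well defined.

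Two points need care.

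\textbf{Well-definedness in $g$.} The triple must not depend on which generator was chosen. With the convention of positive jump, the generator of $\langle g\rangle$ is unique, since the only other generator $g^{-1}$ has negative jump. So $n$ and $g_0$ are determined by the group $\langle g\rangle$. Independence of the presentation of the algebra itself then follows from the final part of the proof of \thmref{thm:main}, applied to the normalized twisted presentations. The step I expect to need the most checking is that \thmref{thm:normalization} applies to every base in \eqref{eq:types}. Its hypothesis is only that $B$ is piecewise hereditary, which holds here.

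\textbf{The explicit third components.} Finally, the realizable third components are those listed in \thmref{thm:table} and Section~\ref{sec:treetable}. Every generalized extension is isomorphic to a twisted extension, so no new classes can appear. Conversely, each twisted extension is a generalized one with $g=\widehat\ph\nu_B^n$ and $g_0=\ph$, so every listed class is realized.
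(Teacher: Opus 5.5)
Your proposal is correct and follows essentially the same route as the paper. You pass to the positive-jump generator and use \thmref{thm:normalization}, forgetting the grading, to replace each generalized extension by $T_{g_0}^n(B)$; you then apply \thmref{thm:main}, and you obtain realizability of every tabulated class from the fact that twisted extensions are generalized ones. Your remark that the positive-jump generator of the infinite cyclic group $\langle g\rangle$ is unique, so that $n$ and $g_0$ are determined by the group, is a small clarification that the paper leaves implicit.
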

\begin{proof}
Choose the positive-jump generator and apply
\thmref{thm:normalization}. An isomorphism of graded categories
is also an isomorphism after forgetting the grading.
Apply \thmref{thm:main} to the resulting twisted extensions.
This proves the assertion and makes the invariant independent of the
chosen presentation.
Conversely, every representative in the tables is a twisted
extension and hence is a generalized extension. This proves the last
assertion of \thmref{thm:main}.
\end{proof}

The tree-type generalized classification of \cite{AK} is consistent
with this statement. Its tree-specific reduction to the action on
objects should not be used in type $\widetilde A$.
In that type, the morphism action can retain a scalar parameter.
The isomorphism in formula~\eqref{eq:normalization}
reduces the classification to that of $T_{g_0}^n(B)$,
with the full automorphism $g_0$ retained.
In particular, it does not replace $g_0$ by its action
on objects alone.
\par\endgroup

\section{Consequences and examples}\label{sec:examples}

\begin{exm}[The two-arm scalar family]\label{ex:scalar-family}
Let $B$ be the path algebra of two paths from a common source to a
common sink, of lengths $p$ and $q$, with $p>q>1$. {Fix an arrow on the first path. For each
$\la\in\kk$, let $\ph_\la$ fix all vertices and the other arrows and multiply the chosen arrow by $\la$.}
Normalize the cycle holonomy
so that this twist has coordinate $\la$. {Then, for all $\la,\mu\in\kk$ and all positive integers $n,n'$,}
\begin{equation}\label{eq:diagonal}
 T_{\ph_\la}^{n}(B)\dereq T_{\ph_\mu}^{n'}(B)
 \quad\Longleftrightarrow\quad n=n'\text{ and }\la=\mu.
\end{equation}
These twists have trivial finite rotation component. The same
criterion holds for stable equivalence. In particular it applies to
the two-arm $(3,2)$ family. Here $\dim_\k B=14$, so its one-fold
extensions have dimension $28$.
\end{exm}

\begin{cor}\label{cor:HH-family}
Let $\La_\la$ $(\la\in\k^\times)$ be the algebras
of \cite{HH}. For $\la,\mu\in\k^\times$, the following
are equivalent:
\begin{enumerate}
\item\label{item:HH-derived} $\La_\la\dereq\La_\mu$;
\item\label{item:HH-stable} the algebras $\La_\la$ and $\La_\mu$ are stably equivalent;
\item\label{item:HH-scalar} $\la=\mu$.
\end{enumerate}
\end{cor}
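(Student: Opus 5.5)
The plan is to read Corollary~\ref{cor:HH-family} off \thmref{thm:main} and the first row of \thmref{thm:table}, applied to the presentation $\La_\la=T^1_{\ph_\la}(B)$ recalled in Section~1 from \cite[Propositions~2.1 and~2.4]{HH}. Here $B$ is hereditary of type $\Aaff32$. The implication (\ref{item:HH-scalar})$\Rightarrow$(\ref{item:HH-derived}) is trivial, since equal parameters give the same algebra. The implication (\ref{item:HH-derived})$\Rightarrow$(\ref{item:HH-stable}) is Rickard's theorem, exactly as in the proof of \thmref{thm:main}. So the content lies in (\ref{item:HH-stable})$\Rightarrow$(\ref{item:HH-scalar}).

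For this, suppose $\La_\la$ and $\La_\mu$ are stably equivalent. By \thmref{thm:main}, $\typ(\La_\la)=\typ(\La_\mu)$. Both algebras have $\De=\Aaff32$ and $n=1$, so the content of this equality is $[\ga_{B,\ph_\la}]=[\ga_{B,\ph_\mu}]$ in $\DPic_\k(H_\De)$. Since $p=3>q=2$ and $d=\gcd(3,2)=1$, we are in the first row of \thmref{thm:table}. The parameter set is $\kk\times\bbZ/1\bbZ\cong\kk$, and the conjugacy class is determined by the holonomy coordinate alone. Thus it remains to show that the holonomy of $\ga_{B,\ph_\la}$ is a fixed bijective function of $\la$. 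In fact I expect it to equal $\la$ or $\la^{-1}$ uniformly in $\la$, depending only on the chosen cycle direction. Granting this, equal classes force $\la=\mu$.

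The main obstacle is this identification of the holonomy. I would first recall the explicit form of $\ph_\la$ from \cite{HH}: it fixes all vertices and all arrows except one, which it rescales by $\la$, or is naturally isomorphic to such an automorphism after vertex rescaling. If $B$ is literally the two-arm quiver, as in Example~\ref{ex:scalar-family}, then the transport to the model $H_\De$ goes through a reorientation tilting that keeps the same underlying cycle. I would then compute the transported twist on the mesh category $\calB$ of \prpref{prop:MY}. Here I would use \lemref{lem:meshgroup}: an object-fixing twist acts on $\calB$ by rescaling arrows, and its class is the product of the arrow scalars around a lifted cycle. The holonomy of the rescaling on the preprojective slice $Q$ is $\la^{\pm1}$. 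This slice is embedded in $\calB$, and the mesh cocycle propagates the rescaling unchanged in holonomy. So the image in $\Out_\k(\calB)$ has scalar coordinate $\la^{\pm1}$ with the same sign for all $\la$.

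Since $\ga_{B,\ph}$ is only defined up to conjugacy, I would also note a sign issue. Conjugation in the abelian group of the first line of \eqref{eq:meshgroup}, together with the central shift factor, cannot invert $\la$. So the same direction convention applies to $\la$ and $\mu$, and equality of classes gives $\la=\mu$. If the presentation in \cite{HH} is instead a non-hereditary piecewise hereditary algebra, \prpref{prop:reduction} still applies. In that case I would verify that the reduction equivalence is compatible with the one-parameter family $\ph_\la$, so that the holonomy transforms by the same rule for every $\la$. This can be done by a continuity argument: the transported classes form a regular family in the torus $\kk\subseteq\Out^0(H)$, a homomorphic image of the family $\la\mapsto\ph_\la$. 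The resulting map $\kk\to\kk$ is a nontrivial character $\la\mapsto\la^{k}$, and injectivity requires checking that $k=\pm1$ on one explicit arrow.
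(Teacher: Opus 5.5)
Your proposal is correct and follows the paper's proof. You use Rickard's theorem for (\ref{item:HH-derived})$\Rightarrow$(\ref{item:HH-stable}), and for (\ref{item:HH-stable})$\Rightarrow$(\ref{item:HH-scalar}) you combine equality of types from \thmref{thm:main} with the first row of \thmref{thm:table}, where $d=\gcd(3,2)=1$ leaves only the scalar coordinate and conjugation in the abelian group cannot invert it. The paper treats the holonomy step as immediate: for $d=1$ the model orientation $+^3-^2$ of \rmkref{rmk:hereditary-models} is exactly the two-arm quiver of \cite{HH}, so $\ga_{B,\ph_\la}$ is represented by $\ph_\la$ itself and has holonomy $\la$ for a fixed direction, and your reorientation-tilting and continuity fallbacks are unnecessary.
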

\begin{proof}
By the proof of \cite[Proposition~2.1]{HH},
\[
 \La_\la\cong T_{\ph_\la}^1(B),
\]
where $B$ is hereditary of type $\Aaff32$ and
$\ph_\la$ multiplies one arrow on the path of length
$3$ by $\la$ and fixes the other arrows and all vertices.
Choose the direction around the underlying cycle so that
the scalar parameter of this automorphism is $\la$.
Since $3>2>1$ and $\gcd(3,2)=1$, the first row of
\thmref{thm:table} has only this scalar parameter.
Two such parameters represent the same conjugacy class
if and only if they are equal.
The assertion follows from \thmref{thm:main}.
\end{proof}

\begin{exm}[Equal arms]
For the analogous diagonal family with $p=q=m>1$, the rotation
component is again zero, and {for all $\la,\mu\in\kk$ and positive integers $n,n'$,} the criterion becomes
\[
 n=n',\qquad \la=\mu\ \text{or}\ \la=\mu^{-1}.
\]
Once a nontrivial rotation is allowed, this must be replaced by the
joint relation $(\la,a)\sim(\la^{-1},-a)$, as in
\thmref{thm:table}.
\end{exm}

\begin{rmk}[The folding number and the return action]
\begingroup
The folding number and the scalar are different parts of the
invariant:
\[
\begin{aligned}
 n&={\text{number of layers represented in the orbit algebra}},\\
 \la&=\text{scalar coordinate of the twist}.
\end{aligned}
\]
Equation~\eqref{eq:return} recovers the twist itself from the $n$th
power of the Nakayama functor. It does not replace $\la$ by $\la^n$.
Thus the scalar identifications remain those in
\thmref{thm:table}, for each folding number.
\par\endgroup
\end{rmk}

\begin{rmk}[Scope of the invariant]
{The invariant uses a functor on the full subcategory of the stable module category on a transjective component.}
It is therefore finer than the induced permutation of the stable
Auslander--Reiten vertices. The classification also does not assert
that the ordinary Hochschild cohomology algebra is a complete
invariant.
\end{rmk}

\appendix

\section{Equivariant repetitive tilting}\label{app:descent}

In this appendix we prove that an equivariant standard
derived equivalence between two base algebras gives a
derived equivalence between their twisted multifold extensions.
We first recall the covering theorem that we use.
We then construct the required tilting subcategory over
the repetitive category and check its equivariance explicitly.

Let $R$ be a small $\k$-linear category.
A full subcategory $\calE$ of $\Kb(\prj R)$ is
\emph{tilting} if
\[
 \begin{aligned}
 \Kb(\prj R)(X,Y[r])&=0
 \quad\text{for all }X,Y\in\calE,\ {r\in\bbZ\setminus\{0\}},\\
 \operatorname{thick}(\calE)&=\Kb(\prj R).
 \end{aligned}
\]
{Here $\operatorname{thick}(\calE)$ is the smallest full triangulated
subcategory of $\Kb(\prj R)$ that contains $\calE$ and is closed under
isomorphisms in $\Kb(\prj R)$ and under direct summands.}
If a group $G$ acts on $R$, it also acts on
$\Kb(\prj R)$ by the induced functors on complexes.
We call $\calE$ \emph{$G$-stable} if $gX\in\calE$
for every $g\in G$ and every $X\in\calE$.

\begin{thm}[\cite{Asa11}, Theorem~4.11]
\label{thm:Asa11-covering}
Let $R$ and $S$ be small $\k$-linear categories with
actions of a group $G$, and assume that $R$
is $\k$-flat.
If there is a $G$-stable tilting subcategory $\calE$
of $\Kb(\prj R)$ and a $G$-equivariant equivalence
$\calE\to S$, then $R/G$ and $S/G$ are derived equivalent.
\end{thm}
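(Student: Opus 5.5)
The plan is to push the $G$-stable tilting subcategory $\calE$ down to the orbit category $R/G$ and then apply the Morita theorem for derived categories of small $\k$-categories (Rickard, in the form given by Keller). Let $\pi:R\to R/G$ be the canonical covering functor. It induces a pushdown functor $\pi\down:\Kb(\prj R)\to\Kb(\prj(R/G))$, applied termwise to complexes, and $\pi\down(R(\blank,x))\cong(R/G)(\blank,\pi x)$. As in the proof of \lemref{lem:full}, I would first establish the covering formula at the level of the homotopy category: for $X,Y\in\Kb(\prj R)$ and $r\in\bbZ$,
\[
 \Kb(\prj(R/G))(\pi\down X,\pi\down Y[r])\cong\Ds_{g\in G}\Kb(\prj R)(gX,Y[r]).
\]
This holds for representable projectives by the definition of the orbit category. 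It extends to bounded complexes of finitely generated projectives termwise, and then passes to homotopy classes because the isomorphism is compatible with differentials and with null-homotopies.

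The next two steps are the tilting conditions for the pushed-down subcategory. Let $\pi\down\calE$ be the full subcategory on one chosen object $\pi\down X$ for each $G$-orbit of objects of $\calE$. Since $\calE$ is $G$-stable, every $gX$ with $X\in\calE$ lies again in $\calE$. The tilting condition on $\calE$ therefore makes every summand in the covering formula vanish for $r\neq0$, so $\pi\down\calE$ has no self-extensions in nonzero degrees. For generation, $\operatorname{thick}(\calE)$ contains every representable $R(\blank,x)$. Because $\pi\down$ is a triangle functor, the thick subcategory generated by $\pi\down\calE$ contains every $(R/G)(\blank,\pi x)$, and hence equals $\Kb(\prj(R/G))$. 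The case $r=0$ of the covering formula identifies $\pi\down\calE$ with the orbit category $\calE/G$; for this I would check that composition in the formula matches the orbit composition $v_s\ast u_r=v_sg^s(u_r)$ of Section~2.

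The final step is to replace $\calE/G$ by $S/G$. The given $G$-equivariant equivalence $\calE\to S$ should induce an equivalence $\calE/G\simeq S/G$; this needs the equivalence to commute with the $G$-actions strictly, or at least up to coherent natural isomorphisms. With that in hand, Keller's theorem applies: its $\k$-flatness hypothesis on $R$ (and hence on $R/G$) is exactly what is needed to realize the equivalence by a two-sided complex. It yields $\Db(\mod R/G)\simeq\Db(\mod \calE/G)\simeq\Db(\mod S/G)$.

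I expect the main obstacle to be this last step, namely the coherence required to descend the equivariant equivalence to the orbit categories. My first approach would be the one used in \prpref{prop:tilting}: replace $\calE$ by an isomorphic subcategory on which $G$ acts strictly, as in \cite[Remark~3.5]{Asa}, and transport that action along the equivalence. The covering identity then becomes an isomorphism of categories $\pi\down\calE\cong S/G$.
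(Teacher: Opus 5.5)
The paper does not prove this statement; it quotes it from \cite{Asa11}. Your first steps follow the structure of that proof and are sound: the covering formula on Hom complexes, the two tilting conditions for the pushed-down subcategory, its identification with $\calE/G$, and Keller's theorem.

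The gap is in the step you flag yourself, and the remedy you propose would not close it. The $G$-action on $\calE$ is already strict. $G$ acts strictly on $R$, hence termwise on complexes, and $G$-stability means $gX\in\calE$ on the nose. So \cite[Remark~3.5]{Asa}, which strictifies stability \emph{up to isomorphism}, solves a problem that does not arise here. What is not strict is the equivalence $F:\calE\to S$. Transporting an action along $F$ only makes sense when $F$ is an isomorphism of categories. Even then it produces a new action $S'$ on $S$ that agrees with the prescribed one only up to the equivariance isomorphisms. You would get $\pi\down\calE\cong S'/G$, and comparing $S'/G$ with $S/G$ is the original problem again. In \prpref{prop:tilting} the transport works precisely because $\ps$ is \emph{defined} by it; here the action on $S$ is given in advance.

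The missing idea is that no strictification is needed: the orbit construction is functorial for equivariant functors with coherent isomorphisms. Use the orbit categories with all objects, $(\calE/G)(X,Y)=\Ds_{a\in G}\calE(aX,Y)$, and likewise for $S/G$. Write the equivariance data as $\phi_a:aF\To Fa$, with $\phi_1$ the identity and $\phi_{ba,X}=\phi_{b,aX}\circ b(\phi_{a,X})$. Let $F/G$ agree with $F$ on objects, and let it send a component $u_a:aX\to Y$ to $F(u_a)\circ\phi_{a,X}:aFX\to FY$. Naturality of $\phi_b$ at $u_a$, together with the cocycle identity, gives
\[
 F\bigl(v_b\circ b(u_a)\bigr)\circ\phi_{ba,X}
 =\bigl(F(v_b)\circ\phi_{b,Y}\bigr)\circ b\bigl(F(u_a)\circ\phi_{a,X}\bigr),
\]
so $F/G$ is a functor. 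It is fully faithful componentwise because $F$ is, and dense because $F$ is. Since $X\cong aX$ in $\calE/G$, your subcategory on orbit representatives is equivalent to this $\calE/G$, and the argument closes. Note also that for small categories the conclusion should be stated for $\calD(\Mod R/G)$ rather than $\Db(\mod R/G)$.
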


The $\k$-flatness assumption means that all morphism spaces
are flat $\k$-modules; it is automatic over a field.
For small categories, derived equivalence here means
a triangle equivalence between the derived categories
$\calD(\Mod R)$ and $\calD(\Mod S)$ of all right modules.
Equivariance is given by coherent natural isomorphisms,
not necessarily by equalities.
For $G=\bbZ$, it suffices to give one natural isomorphism
between the functors obtained by applying the action of $1\in\bbZ$
before and after the given functor.
Indeed, let $U:\calC\to\calD$ be the given functor, and let
$S$ and $T$ be the automorphisms defining the actions of $1\in\bbZ$
on $\calC$ and $\calD$, respectively.
From a natural isomorphism $\alpha:TU\to US$, define
$\alpha_0=\id_U$ and, for every integer $n\geq0$, set
\[
 \alpha_{n+1}:=(\alpha_n S)\circ(T^n\alpha),
 \qquad
 \alpha_{-n}:=T^{-n}\alpha_n^{-1}S^{-n}.
\]
These natural isomorphisms $\alpha_n:T^nU\to US^n$ satisfy
\begingroup
\begin{equation}\label{eq:Z-equivariance}
 \alpha_{m+n}=(\alpha_m S^n)\circ(T^m\alpha_n)
 \qquad(m,n\in\bbZ).
\end{equation}
\endgroup
{Thus $U$, equipped with $(\alpha_n)_{n\in\bbZ}$, is $\bbZ$-equivariant.}
This construction is the special case of
\cite[Proposition~5.4.15(2)]{AsaBook}
in which the autoequivalences are automorphisms,
the chosen quasi-inverses are their strict inverses,
and the units and counits are identity natural transformations.

\begin{prp}
Let $B$ and $C$ be finite-dimensional basic $\k$-algebras,
regarded as $\k$-linear categories, and let
$U:\Db(\mod C)\to\Db(\mod B)$ be a standard derived
equivalence.
Let $\ph\in\Aut(B)$ and $\ps\in\Aut(C)$ be
$\k$-linear category automorphisms, and denote their induced
derived autoequivalences by $\Ph$ and $\Ps$, respectively.
If $\Ph U\cong U\Ps$ naturally, then
$T_\ph^n(B)$ and $T_\ps^n(C)$ are derived equivalent
for every $n\geq1$.
\end{prp}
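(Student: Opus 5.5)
The approach is to apply \thmref{thm:Asa11-covering} with $G=\bbZ$. Here $R=\widehat B$ carries the action generated by $g_B=\widehat\ph\nu_B^n$, and $S=\widehat C$ carries the action generated by $g_C=\widehat\ps\nu_C^n$. Then $R/G=T_\ph^n(B)$ and $S/G=T_\ps^n(C)$, so it suffices to produce a $\bbZ$-stable tilting subcategory $\calE$ of $\Kb(\prj\widehat B)$ and a $\bbZ$-equivariant equivalence $\calE\to\widehat C$.

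\textbf{Construction of $\calE$.} Let $T$ be the tilting complex $U(C)$ in $\Kb(\prj B)$, with summands $T_x=U(C(\blank,x))$ for $x\in C_0$. Happel's construction, as used for repetitive tilting in \cite{Asa} (or Rickard's lifting of a tilting complex to the repetitive category), gives complexes $T_x^{[i]}$ in $\Kb(\prj\widehat B)$. These are obtained by placing $T_x$ in layer $i$ and correcting it so that the full subcategory on $\{T_x^{[i]}\}$ is isomorphic to $\widehat C$. Concretely, the morphism spaces between adjacent layers are $D\,\Db(\mod B)(T_y,T_x)\cong DC(y,x)$, and the composition is the one recorded in Section~2.

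Take $\calE$ to be this full subcategory. By construction $\nu_B$ acts on it as $\nu_C$: it sends $T_x^{[i]}$ to $T_x^{[i+1]}$. Hence the tilting property reduces to two facts. First, the layers generate $\Kb(\prj\widehat B)$, because $T$ generates $\Kb(\prj B)$ and the repetitive category is filtered by layers. Second, the vanishing of $\Hom(X,Y[r])$ for $r\ne0$ reduces to the vanishing within a layer and between adjacent layers, which follows from $T$ being tilting together with the duality $D\widehat B(x,y)\cong\widehat B(y,\nu x)$.

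\textbf{Equivariance.} The given isomorphism $\eta:\Ph U\To U\Ps$ yields isomorphisms $\theta_x:\Ph(T_x)\to T_{\ps x}$, exactly as in the proof of \thmref{thm:hereditary}. These are compatible with morphisms $U(P_f)$, and I lift them layerwise to $\widehat\ph(T_x^{[i]})\cong T_{\ps x}^{[i]}$. On adjacent-layer morphisms, the standard lifts $\widehat\ph$ and $\widehat\ps$ act by the dual of the inverse. Naturality of $\eta$, dualized, then shows that these isomorphisms intertwine the two actions on the functionals in $DC(y,x)$. This yields a natural isomorphism $\widehat\Ph|_\calE\circ J^{-1}\cong J^{-1}\circ\widehat\ps$, where $J:\calE\to\widehat C$ is the isomorphism above.

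Composing with the strict commutation $\nu\widehat\ph=\widehat\ph\nu$ gives a single natural isomorphism $\alpha:g_BJ^{-1}\to J^{-1}g_C$. As explained before the statement, \eqref{eq:Z-equivariance} extends such an $\alpha$ to a full $\bbZ$-equivariant structure. Since $\calE$ is only stable up to isomorphism, I first replace it by an isomorphic, strictly $g_B$-stable subcategory, as in \cite[Remark~3.5]{Asa}: choose representatives on one orbit of layers and transport along $g_B$. This is possible because $g_B$ has jump $n\ne0$, so the action on layers is free. \thmref{thm:Asa11-covering} then gives $\calD(\Mod T_\ph^n(B))\simeq\calD(\Mod T_\ps^n(C))$, and by Rickard's theorem this restricts to the bounded derived categories of finite-dimensional modules.

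\textbf{Main obstacle.} The delicate step is the adjacent-layer equivariance. I need the identification of $\Hom(T_x^{[i]},T_y^{[i+1]})$ with $DC(y,x)$ to transport $\widehat\ph$ to exactly the inverse-dual action defining $\widehat\ps$, rather than some twisted version of it. My first attempt would be to make this identification canonical by using Serre duality $D\Db(T_y,T_x)\cong\Db(T_x,\nu T_y)$ and the compatibility of $\Ph$ with the Nakayama functor. With that in place, the required identity follows from applying $D$ to the naturality square of $\eta$.
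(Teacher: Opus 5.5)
Your proposal is correct and takes essentially the paper's route. Like the paper, you apply the covering theorem of \cite{Asa11} with $G=\bbZ$ to the tilting subcategory of layer-placed complexes $T_x^{[i]}$ identified with $\widehat C$, place $\eta$ layerwise, obtain adjacent-layer naturality by dualizing, and extend the single isomorphism via \eqref{eq:Z-equivariance}. Two minor differences: the paper gets strict stability by enlarging the base subcategory to all $\Ph^jT_x$, so that $\Ph(\calE)=\calE$ and hence $G_B(\widehat{\calE})=\widehat{\calE}$, instead of transporting along $g_B$ from a fundamental domain of layers; and your ``main obstacle'' needs neither a correction of the placed complexes nor Serre duality, because plain placement gives $\Hom_{\widehat B}\up(X^{[i]},Y^{[i+1]})\cong D\Hom_B\up(Y,X)$ termwise from $\widehat B(x^{[i]},y^{[i+1]})=DB(y,x)$, where $\widehat\ph$ acts by the inverse dual by definition.
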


\begin{proof}
Fix $n\geq1$, and put
\[
 g_B=\widehat\ph\nu_B^n,
 \qquad
 g_C=\widehat\ps\nu_C^n.
\]
We apply \thmref{thm:Asa11-covering} with
\[
 R=\widehat B,\qquad S=\widehat C,\qquad G=\bbZ,
\]
where $1\in\bbZ$ acts by $g_B$ and $g_C$, respectively.
Both categories are small and $\k$-flat.
Our construction has the following form:
\[
 \begin{aligned}
 C&\xrightarrow{\ F\ }\calE\subseteq\Kb(\prj B),\\
 \widehat C&\xrightarrow{\ \widehat F\ }
 \widehat{\calE}\subseteq\Kb(\prj\widehat B).
 \end{aligned}
\]
The tilting construction in {(\ref{step:repetitive-bases})} and its extension to
repetitive categories in {(\ref{step:repetitive-categories})} follow \cite[proof of Theorem~1.5]{Asa97}.
We recall these constructions in our notation and verify
the equivariance needed to apply
\thmref{thm:Asa11-covering} in {(\ref{step:repetitive-covering})}.

\medskip
\setcounter{proofstep}{0}
\noindent\textbf{\proofstep{step:repetitive-bases} Tilting data over the bases.}

{We use formal additive-hull models of $\prj B$ and $\prj C$.
On projective complexes, $\Ph$ and $\Ps$ are the termwise actions induced
by $\ph$ and $\ps$; their strict inverses are induced by $\ph^{-1}$ and
$\ps^{-1}$, respectively. Negative powers in this construction use these
strict inverses.}
For each $x\in C_0$, put $P_x=C(\blank,x)$ and choose a bounded
complex $T_x$ of projective $B$-modules representing $U(P_x)$.
Since $U$ is a derived equivalence, we have
\[
 \begin{aligned}
 \Kb(\prj B)(T_x,T_y[r])&=0
 \quad(x,y\in C_0,\ {r\in\bbZ\setminus\{0\}}),\\
 \operatorname{thick}\{T_x\mid x\in C_0\}&=\Kb(\prj B).
 \end{aligned}
\]
Choose a natural isomorphism $\et:\Ph U\To U\Ps$.
Using $\Ps P_x\cong P_{\ps x}$ and iterating $\et$
and its inverse gives
\[
 \Ph^jT_x\cong T_{\ps^j x}
 \qquad(x\in C_0,\ j\in\bbZ).
\]
Define $\calE$ to be the full subcategory of $\Kb(\prj B)$
with object set
\[
 \calE_0=\{\Ph^jT_x\mid x\in C_0,\ j\in\bbZ\}.
\]
Every object added to $\{T_x\mid x\in C_0\}$ is therefore
isomorphic to one of the $T_x$. Hence $\calE$ is a small
tilting subcategory, $\Ph(\calE)=\calE$. {Transporting the restriction of $U$
to the full subcategory on the chosen projectives $P_x$ $(x\in C_0)$ along the
chosen isomorphisms $T_x\cong U(P_x)$ gives an equivalence}
\[
 F:C\longrightarrow\calE,\qquad x\longmapsto T_x.
\]
We do not require $\calE$ to be basic.

{The chosen isomorphisms $T_x\cong U(P_x)$ and
$\Ps P_x\cong P_{\ps x}$ transport $\et$ to a natural isomorphism
$\et_F:\Ph F\To F\ps$. Applying the construction above to $\et_F$
makes $F$ a $\bbZ$-equivariant equivalence for the actions generated
by $\ps$ and $\Ph|_\calE$.}

\medskip
\noindent\textbf{\proofstep{step:repetitive-categories} Passage to the repetitive categories.}
{We continue to use the formal additive-hull models, also for
$\prj\widehat B$, with the actions on complexes given termwise.}
\begingroup
For each $i\in\bbZ$, define the additive functor
\[
 L_i:\prj B\longrightarrow\prj\widehat B,
 \qquad B(\blank,x)\longmapsto\widehat B(\blank,x^{[i]}).
\]
{The object assignment is made for every $x\in B_0$. On morphisms between representable projectives,
$L_i$ is induced by $f\mapsto f^{[i]}$ for all $x,y\in B_0$ and $f\in B(x,y)$; extend it to finite direct sums
and direct summands.}
For each complex $X$ of finitely generated projective
$B$-modules {and each $r\in\bbZ$,} put
\[
 (X^{[i]})^r=L_i(X^r),\qquad
 d_{X^{[i]}}^r=L_i(d_X^r).
\]
We call $X\mapsto X^{[i]}$ \emph{placement in the $i$th layer}.
It does not change cohomological degrees and is distinct
from the shift $X[i]$.
The terms same-layer and adjacent-layer also apply to
morphisms $X^{[i]}\to Y^{[i]}$ and
$X^{[i]}\to Y^{[i+1]}$, respectively.
\par\endgroup
Let $\widehat{\calE}$ be the full subcategory of
$\Kb(\prj\widehat B)$ with object set
\[
 \widehat{\calE}_0
 =\{X^{[i]}\mid X\in\calE_0,\ i\in\bbZ\}.
\]

For all $X,Y\in\calE$ and all $i,j\in\bbZ$,
the definition of $\widehat B$ gives natural isomorphisms
of complexes
\[
 \Hom_{\widehat B}\up(X^{[i]},Y^{[j]})
 \cong
 \begin{cases}
  \Hom_B\up(X,Y),&j=i,\\
  D\Hom_B\up(Y,X),&j=i+1,\\
  0,&j\notin\{i,i+1\},
 \end{cases}
\]
where the second line uses the usual differential
 of the dual complex.
Since $\calE$ is tilting, these complexes have zero
 cohomology in every nonzero degree.
Moreover, $\calE$ generates $\Kb(\prj B)$, so
\[
 B(\blank,x)\in\operatorname{thick}(\calE)
 \qquad(x\in B_0).
\]
For each $i\in\bbZ$, the functor $X\mapsto X^{[i]}$
preserves shifts, triangles, and direct summands. Hence
\[
 \begin{aligned}
 \widehat B(\blank,x^{[i]})
 &=\bigl(B(\blank,x)\bigr)^{[i]}\\
 &\in\operatorname{thick}\{X^{[i]}\mid X\in\calE\}\\
 &\subseteq\operatorname{thick}(\widehat{\calE})
 \qquad(x\in B_0).
 \end{aligned}
\]
As $x\in B_0$ and $i\in\bbZ$ vary, the modules
$\widehat B(\blank,x^{[i]})$ form all the indecomposable
projective $\widehat B$-modules, up to isomorphism.
Thus $\widehat{\calE}$ generates $\Kb(\prj\widehat B)$.
Hence $\widehat{\calE}$ is a tilting subcategory
 of $\Kb(\prj\widehat B)$.

{For the same $X,Y\in\calE$ and $i,j\in\bbZ$, taking degree-zero cohomology in the displayed formula gives}
\[
 \Kb(\prj\widehat B)(X^{[i]},Y^{[j]})
 \cong
 \begin{cases}
  \calE(X,Y),&j=i,\\
  D\calE(Y,X),&j=i+1,\\
  0,&j\notin\{i,i+1\}.
 \end{cases}
\]
These identifications also describe composition. For
$\la\in D\calE(Y,X)$, $a:X'\to X$, and $b:Y\to Y'$
in $\calE$, the dual bimodule action is
\[
 (b\la a)(h)=\la(ahb)
 \qquad\bigl(h\in\calE(Y',X')\bigr).
\]
The composite of two adjacent-layer morphisms is zero.
Thus $\widehat{\calE}$ has the morphisms and composition
of the repetition of $\calE$.

For the equivalence $F:C\to\calE$ constructed in {(\ref{step:repetitive-bases})}, {and all $x,y\in C_0$,} put
\[
 F_{x,y}:C(x,y)\xrightarrow{\ \cong\ }\calE(Fx,Fy),
 \qquad f\longmapsto F(f).
\]
Its extension
$\widehat F:\widehat C\to\widehat{\calE}$ is given on objects {for all $x\in C_0$ and $i\in\bbZ$} by
\[
 \widehat F(x^{[i]})=(Fx)^{[i]}.
\]
On same-layer morphisms it uses $F_{x,y}$, whereas on
adjacent-layer morphisms it uses
\[
 DC(y,x)\xrightarrow{\ D(F_{y,x}^{-1})\ }
 D\calE(Fy,Fx),
 \qquad
 \la\longmapsto\la\circ F_{y,x}^{-1}.
\]
The dual bimodule formula above shows that these maps
respect composition. Since $F$ is fully faithful and
essentially surjective, so is $\widehat F$.

We next make the equivariance explicit. Let $G_B$ denote
the autoequivalence of $\Kb(\prj\widehat B)$ induced by $g_B$.
{For all $X\in\calE$, $x\in C_0$, and $i\in\bbZ$, we have}
\[
 G_B(X^{[i]})=(\Ph X)^{[i+n]},
 \qquad
 g_C(x^{[i]})=(\ps x)^{[i+n]}.
\]
On same-layer morphisms, $G_B$ uses $\Ph$; on adjacent-layer
morphisms, it uses the inverse dual of the corresponding
isomorphism on morphism spaces, together with the shift
of the layer index by $n$.
{Let $\et_F:\Ph F\To F\ps$ be the natural isomorphism
constructed in (\ref{step:repetitive-bases}).} Define
\[
 \widehat\et:\ G_B\widehat F\To\widehat Fg_C,
 \qquad
 \widehat\et_{x^{[i]}}
   ={((\et_F)_x)^{[i+n]}}:
   (\Ph Fx)^{[i+n]}\longrightarrow(F\ps x)^{[i+n]}.
\]
Thus its components are obtained by placing the components
of {$\et_F$} in the appropriate layer. Its naturality means that,
for every morphism $u:z\to z'$ in $\widehat C$,
\[
 \widehat F(g_Cu)\,\widehat\et_z
 =
 \widehat\et_{z'}\,G_B(\widehat F(u)).
\]
For same-layer morphisms, this is the naturality of {$\et_F$}.
For adjacent-layer morphisms, it follows from the same
naturality identity by the inverse-dual construction above
and the dual bimodule formula.
The construction is independent of the layer index, so it
commutes with translation of the layers.
{Applying the construction preceding \eqref{eq:Z-equivariance}
to $\widehat\et$ makes $\widehat F$ a $\bbZ$-equivariant equivalence
for the actions generated by $g_C$ and $G_B|_{\widehat{\calE}}$.}

Finally, $\Ph(\calE)=\calE$ and
$G_B(X^{[i]})=(\Ph X)^{[i+n]}$ give
\[
 G_B(\widehat{\calE})=\widehat{\calE}.
\]
The action on $\widehat{\calE}$ is the restriction of $G_B$,
so its inclusion into $\Kb(\prj\widehat B)$ is equivariant.

\medskip
\noindent\textbf{\proofstep{step:repetitive-covering} Application of the covering theorem.}
By {(\ref{step:repetitive-categories})}, $\widehat{\calE}$ is a small $G$-stable
tilting subcategory of $\Kb(\prj\widehat B)$, and
$\widehat F:\widehat C\to\widehat{\calE}$ is
a $G$-equivariant equivalence.
An equivariant quasi-inverse gives the equivalence
$\widehat{\calE}\to\widehat C$ required in
\thmref{thm:Asa11-covering}.
Thus that theorem applies with
\[
 (R,S,G,\calE)
 =(\widehat B,\widehat C,\bbZ,\widehat{\calE})
\]
and gives a derived equivalence between
$\widehat B/\langle g_B\rangle$ and
$\widehat C/\langle g_C\rangle$.

Since $n\geq1$, these actions are free on objects, and
each orbit has a representative in the layers
$0,\ldots,n-1$.
The corresponding finite full subcategories are skeletons
of the orbit categories and have associated algebras
$T_\ph^n(B)$ and $T_\ps^n(C)$.
The tilting equivalence therefore restricts to
\[
 \Db(\mod T_\ph^n(B))
 \simeq
 \Db(\mod T_\ps^n(C)),
\]
as required.
\end{proof}

\section{Some standard facts used in the proofs}
\label{app:standard-facts}

We recall some standard facts used in the main text,
including the reasons for the identifications involved.

\subsection{Picard groups of basic algebras}
\label{app:picard-basic}

For a finite-dimensional $\k$-algebra $B$, let $\Pic_\k(B)$
denote the group of isomorphism classes of invertible
$\k$-central $B$-$B$-bimodules, with multiplication induced
by $\otimes_B$.
Here $\k$-central means that the two actions of $\k$ agree.
{For each $\ph\in\Aut_\k(B)$,} write ${}_1B_\ph$ for the bimodule
whose underlying vector space is $B$ and whose actions are
\[
 b\cdot x\cdot c=bx\ph(c){\qquad\text{for all }b,x,c\in B.}
\]

\begin{prp}\label{prop:basic-picard}
If $B$ is basic, the assignment $\ph\mapsto{}_1B_\ph$
induces an isomorphism
\[
 \Out_\k(B)\xrightarrow{\ \sim\ }\Pic_\k(B).
\]
\end{prp}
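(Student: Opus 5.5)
We prove that the map $\Phi\colon\ph\mapsto{}_1B_\ph$ from $\Aut_\k(B)$ to $\Pic_\k(B)$ is a well-defined group homomorphism, surjective, with kernel exactly $\Inn(B)$. \emph{Well-definedness and multiplicativity.} First, ${}_1B_\ph$ is invertible with inverse ${}_1B_{\ph^{-1}}$. Next, $x\otimes y\mapsto x\ph(y)$ gives a bimodule isomorphism
\[
 {}_1B_\ph\otimes_B{}_1B_\psi\cong{}_1B_{\ph\psi},
\]
whose inverse is $z\mapsto z\otimes1$. Left linearity is immediate. For right linearity, the right action of $c$ on the tensor product acts through $\psi(c)$ on the second factor and is sent to $x\ph(y)\ph\psi(c)$, as required. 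Hence $\Phi$ is a homomorphism. The $\k$-centrality holds because $\ph$ is $\k$-linear.

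\emph{Kernel.} If $\ph=\iota_u$ with $\iota_u(c)=ucu^{-1}$, then $x\mapsto xu$ is an isomorphism ${}_1B_{\iota_u}\to B$: indeed $bx\,ucu^{-1}\cdot u=bxu\,c$. Conversely, let $f\colon{}_1B_\ph\to B$ be a bimodule isomorphism and put $u=f(1)$. Left linearity gives $f(x)=xu$, so $u$ is a unit because $f$ is bijective. Right linearity gives $\ph(c)u=uc$, so $\ph=\iota_{u^{-1}}$ is inner. This part needs no basicness.

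\emph{Surjectivity.} This is the main step and the only place where basicness is used. Let $M$ be an invertible bimodule. Then $-\otimes_BM$ is a Morita autoequivalence of $\mod B$. It therefore sends $B_B$ to a progenerator $M_B$ and preserves the number of isomorphism classes of indecomposable summands and their multiplicities. Since $B$ is basic, $B_B=\Ds_{x\in B_0}e_xB$ with pairwise nonisomorphic indecomposable summands, and each appears with multiplicity one. The same therefore holds for $M_B$, so $M_B\cong B_B$.

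Choose a right module isomorphism $\al\colon B\to M$ and put $m=\al(1)$, so $M=mB$ freely. For $b\in B$ we have $bm\in M=mB$, so $bm=m\ph(b)$ for a unique $\ph(b)\in B$. One checks that $\ph$ is a $\k$-algebra endomorphism, using $\k$-centrality for linearity and associativity of the left action for multiplicativity. It is injective because $\End_B(M_B)\cong B$ acts faithfully via the left action, by invertibility. A $\k$-linear injective endomorphism of the finite-dimensional $B$ is bijective. Then $x\mapsto mx$ is a bimodule isomorphism ${}_1B_\ph\to M$, since $b\cdot mx=m\ph(b)x$.

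The only delicate point is recording the side conventions correctly. It may turn out that the natural parametrization produces ${}_{\ph}B_1$ rather than ${}_1B_\ph$. Since ${}_\ph B_1\cong{}_1B_{\ph^{-1}}$ via $x\mapsto\ph^{-1}(x)$, this does not affect the conclusion. Finally, combining the appendix's identification of $\Out_\k(B)$ with natural-isomorphism classes of category automorphisms is a separate remark and is not needed for this proposition.
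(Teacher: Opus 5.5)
Your proof is correct and follows essentially the paper's route: the same multiplicativity isomorphism $x\otimes y\mapsto x\ph(y)$, basicness to get a one-sided isomorphism $M\cong B$, transport of the other action to produce the twist, and a unit computation for the kernel. The paper works with left modules, so it lands directly on ${}_1B_\ph$ and gets bijectivity of $\ph$ from full faithfulness, where you use faithfulness plus finite dimension. With your right-module choice, however, $x\mapsto mx$ is really an isomorphism ${}_\ph B_1\to M$, so your closing identification ${}_\ph B_1\cong{}_1B_{\ph^{-1}}$ is not optional: it is the step that completes surjectivity.
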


\begin{proof}
We use left modules in this proof.
{For any $\ph,\ps\in\Aut_\k(B)$, the bimodule isomorphism}
\[
 {}_1B_\ph\otimes_B{}_1B_\ps
 \longrightarrow{}_1B_{\ph\ps},
 \qquad x\otimes y\longmapsto x\ph(y),
\]
shows that the assignment is a group homomorphism.

We next show where the assumption that $B$ is basic enters.
Let $M$ be an invertible $\k$-central $B$-$B$-bimodule.
The functor $F=M\otimes_B-$ is an equivalence of left
module categories, so it permutes the isomorphism classes
of indecomposable projective modules.
Since $B$ is basic, its left regular module has a decomposition
\[
 {}_BB\cong P_1\oplus\cdots\oplus P_r,
\]
where the $P_i$ represent all these classes, each occurring
exactly once. Consequently,
\[
 {}_BM\cong F({}_BB)
 \cong\bigoplus_{i=1}^rF(P_i)
 \cong\bigoplus_{i=1}^rP_i
 \cong{}_BB.
\]

Choose such a left module isomorphism and use it to transport
the right action of $B$ on $M$ to $B$.
Every endomorphism of the left regular module is right
multiplication by a unique element of $B$.
Thus the transported action has the form
\[
 x\cdot b=x\ph(b){\qquad\text{for all }x,b\in B}
\]
{for some unital $\k$-algebra homomorphism $\ph:B\to B$.}
Moreover, full faithfulness of $F$ gives an isomorphism
\[
 \End_B({}_BB)\xrightarrow{\ \sim\ }\End_B({}_BM),
\]
which sends right multiplication by $b$ on $B$ to the
right action of $b$ on $M$, {for every $b\in B$.}
Under the chosen identification ${}_BM\cong{}_BB$,
this is precisely the map induced by $\ph$.
Hence $\ph$ is an automorphism and $M\cong{}_1B_\ph$.
This proves surjectivity.

{Finally, for any $\ph,\ps\in\Aut_\k(B)$,} a left module isomorphism
${}_1B_\ph\to{}_1B_\ps$ has the form $x\mapsto xu$
for a unit $u\in B^\times$.
It is a bimodule homomorphism if and only if
\[
 \ph(b)u=u\ps(b)\qquad\text{for every }b\in B,
\]
or equivalently $\ps(b)=u^{-1}\ph(b)u$.
Thus the two bimodules are isomorphic exactly when
$\ph$ and $\ps$ have the same image in $\Out_\k(B)$.
This proves the assertion.
\end{proof}

This is the isomorphism underlying the identification
$\Out_\k(B)=\Pic_\k(B)$ in the main text.
For the right-module convention used there, the corresponding
autoequivalence {of $\Db(\mod B)$} is
\[
 X\longmapsto X\otimes_B{}_\ph B_1,
\]
where ${}_\ph B_1$ has left action $b\cdot x=\ph(b)x$
and the usual right action.
It is the inverse bimodule of ${}_1B_\ph$.
With this convention, multiplication of automorphisms
corresponds to composition of autoequivalences.

We also explain the categorical interpretation. {Here $B$ is basic, as in \prpref{prop:basic-picard}.}
Choose a complete set $e_1,\ldots,e_r$ of primitive
orthogonal idempotents of $B$, and let $\calC$ be the category
with objects $1,\ldots,r$ and
\[
 \calC(i,j)=e_jBe_i{\qquad\text{for all }i,j\in\{1,\ldots,r\}.}
\]
A {$\k$-linear category automorphism} of $\calC$ gives an algebra
automorphism of $B$ that permutes the chosen idempotents.

Every outer automorphism class has such a representative.
Indeed, for $\al\in\Aut_\k(B)$, put $f_i=\al(e_i)$ {for each $i\in\{1,\ldots,r\}$.}
Since $B$ is basic, there is a permutation $\si$ {of $\{1,\ldots,r\}$} such that
$Bf_i\cong Be_{\si(i)}$ {for every $i\in\{1,\ldots,r\}$.}
The direct sum of chosen isomorphisms is an automorphism
of the left regular module, hence is right multiplication
by a unit $v$.
{For every $i\in\{1,\ldots,r\}$, it follows that}
\[
 f_iv=ve_{\si(i)},\qquad
 v^{-1}\al(e_i)v=e_{\si(i)}.
\]
Thus composing $\al$ with an inner automorphism gives
an automorphism that permutes the chosen idempotents.

Lastly, let $\ph,\ps$ be {$\k$-linear category automorphisms} of $\calC$.
A natural isomorphism $\et:\ph\To\ps$ consists of
invertible morphisms
\[
 u_i\in e_{\ps(i)}Be_{\ph(i)}{\qquad\text{for each }i\in\{1,\ldots,r\}}
\]
{such that $\ps(a)u_i=u_j\ph(a)$ for all $i,j\in\{1,\ldots,r\}$ and all $a\in e_jBe_i$.}
The element {$u=\sum_{i=1}^r u_i$} is a unit of $B$, and these
equalities say precisely that
\[
 \ps(b)=u\ph(b)u^{-1}\qquad(b\in B).
\]
Conversely, any such inner relation gives a natural
isomorphism, with components $u_i=ue_{\ph(i)}$ {for each $i\in\{1,\ldots,r\}$.}
Consequently, $\Out_\k(B)$, and hence $\Pic_\k(B)$,
can be identified with the group of natural-isomorphism
classes of automorphisms of $\calC$.
No assumption that these automorphisms fix every object
is needed.

\section{Equivalences of stable categories without
 triangle structures}
\label{app:additive-stable}

The main text uses stable equivalence in the sense of
 a $\k$-linear triangle equivalence.
In this appendix we show that the classification in
 \thmref{thm:main} remains valid if one assumes only
 a $\k$-linear equivalence of the stable module categories.
Readers interested only in the equivalence between
 conditions {(\ref{item:main-derived})} and {(\ref{item:main-type})} of that theorem may omit this appendix:
 none of the arguments in the main text depends on it.

The point is that a $\k$-linear equivalence preserves
 the Serre functor, even when it is not assumed to preserve
 the suspension functor.
We use a power of the Serre functor to recover the twist
 on a full transjective component.
The theorem of Miyachi--Yekutieli then provides the
 standard derived equivalence needed to interpret this
 recovery as conjugacy in the derived Picard group.

\subsection{The Serre functor on a full component}

We first record that the component arguments used above
 apply to every affine hereditary type.

\begin{lem}\label{lem:affine-full-return}
Let $H=\k Q$ be connected hereditary of affine type,
 let $\ph\in\Aut(H)$, and let {$n\geq1$ be an integer.}
Put $E=T_\ph^n(H)$, and use the notation
 $P:\Db(\mod H)\to\umod E$ and $\calB$
 from Section~\ref{sec:mesh}.
Then the conclusions of \lemref{lem:full} and
 \lemref{lem:return} hold for $E$.
Moreover, there are natural isomorphisms
\[
\ta_E P\cong P\ta_H,\quad \text{and}\quad
N_E^nP|_\calB\cong P\Ph^-|_\calB.
\]
\end{lem}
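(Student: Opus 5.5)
We plan to rerun the arguments of Section~\ref{sec:mesh} verbatim. The one thing to check is where they used type $\Aaff pq$, and that they work for every connected affine $H=\k Q$.

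The only type-specific inputs to \lemref{lem:full} are three facts:
\begin{itemize}
\item the description of the transjective component $\calB$ containing the projectives;
\item the vanishing in \lemref{lem:vanish};
\item local support-finiteness of $\widehat H$, taken from \cite[Lemma~6.1]{Asa}.
\end{itemize}
\lemref{lem:vanish} uses only three things: heredity, Auslander--Reiten duality, and the absence of nonzero morphisms from preinjective to preprojective modules. All of these hold for any representation-infinite hereditary algebra. The component count in \cite[Lemma~6.2]{Asa} also works for every affine type, since the non-transjective components of $\Db(\mod H)$ are tubes, hence $\ta$-periodic. So \lemref{lem:full} holds as stated. Likewise, \lemref{lem:standard-identity} and \lemref{lem:Happel-actions} use only that $H$ is hereditary and the Miyachi--Yekutieli standardness theorem. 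The proof of \lemref{lem:return} uses only the repetitive duality and the orbit isomorphism $PG\cong P$. Both are independent of the type, so \eqref{eq:return} holds for every affine $H$. This already gives the second displayed isomorphism $N_E^nP|_\calB\cong P\Ph^-|_\calB$.

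For $\ta_EP\cong P\ta_H$, we will combine two compatibilities. First, Happel's equivalence $R$ is a triangle equivalence, so it intertwines Serre functors and hence Auslander--Reiten translations: $R\ta_H\cong\ta_{\widehat H}R$ with $\ta_{\widehat H}\cong\Om^2N_{\widehat H}$. Second, we will show that the stable pushdown $\pi_*$ commutes with $\Om$ and with $N$. For $\Om$: $\pi\down$ is exact and preserves projective covers, or at least projectives, so it takes a syzygy sequence to a syzygy sequence and gives $\pi_*\Om\cong\Om\pi_*$. For $N$: the proof of \lemref{lem:return} already gives $N_E\pi_*\cong\pi_*N_{\widehat H}$. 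Then
\[
 \ta_EP=\Om^2N_E\pi_*R\cong\pi_*\Om^2N_{\widehat H}R\cong\pi_*\ta_{\widehat H}R\cong\pi_*R\ta_H=P\ta_H .
\]

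The main obstacle is naturality in the step $\pi_*\Om\cong\Om\pi_*$. The syzygy is defined only up to the choice of projective presentation, so the isomorphism has to be built as a natural transformation on the stable category. Our first attempt will use the exact triangle structure. Since $\pi\down$ is exact and sends projectives to projectives, $\pi_*$ is a triangle functor. Any triangle functor between stable categories of selfinjective algebras commutes with the suspension $\Om^{-1}$, and hence with $\Om$, up to a natural isomorphism. This avoids choosing syzygies by hand.
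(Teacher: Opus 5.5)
Your proposal is correct and follows essentially the paper's proof. Like the paper, you check that the vanishing, covering, component and return arguments of Section~\ref{sec:mesh} never use type $\widetilde A$, and you derive $\ta_EP\cong P\ta_H$ from $\ta_E\cong\Om_E^2N_E$, the compatibility of pushdown with $N$, and the triangle structure of $P$; the only difference is bookkeeping, since you commute $\pi_*$ past $\Om$ and $N$ in $\umod\widehat H$ and then use $R\ta_H\cong\ta_{\widehat H}R$, whereas the paper composes $N_EP\cong P\ta_H[2]$ with $\Om_EP\cong P[-1]$ directly.
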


\begin{proof}
We explain why the arguments do not require type
 $\widetilde A$.
The covering formula in \eqref{eq:stablecover} holds
 for every hereditary $H$, with
\[
 G=\Ph\ta_H^n[2n].
\]
Every object $Z\in\calB$ is isomorphic to exactly one
 of the following two kinds of objects:
\begin{enumerate}
\item\label{item:component-preprojective} an indecomposable preprojective $H$-module,
      regarded as a complex concentrated in degree zero;
\item\label{item:component-preinjective} $I[-1]$, where $I$ is an indecomposable
      preinjective $H$-module.
\end{enumerate}
Consequently, there is a unique integer
 $c\in\{0,-1\}$ and, up to isomorphism, a unique
 indecomposable $H$-module $N$ such that $Z\cong N[c]$.
If $c=0$, then $N$ is preprojective;
if $c=-1$, then $N$ is preinjective.
The uniqueness of $c$ follows from the degree of the
 unique nonzero cohomology module of $Z$.

Now fix $X,Y\in\calB$ and $j\in\bbZ\setminus\{0\}$.
Both $\Ph$ and $\ta_H$ preserve $\calB$ up to isomorphism.
Applying the preceding description to
 $\Ph^j\ta_H^{nj}X$ and $Y$, we obtain uniquely
 determined integers $a,b\in\{0,-1\}$ and
 indecomposable $H$-modules $M,L$, unique up to
 isomorphism, such that
\[
 \Ph^j\ta_H^{nj}X\cong M[a],
 \qquad Y\cong L[b].
\]
Here $M$ is preprojective if $a=0$ and preinjective
 if $a=-1$; likewise, $L$ is preprojective if $b=0$
 and preinjective if $b=-1$.
Then
\[
{ \Db(\mod H)(G^jX,Y)}
 \cong\operatorname{Ext}^{\,b-a-2nj}_H(M,L).
\]
Since $H$ is hereditary, the right-hand side vanishes
 unless the exponent is $0$ or $1$.
For $j\ne0$, the only possible exception is
 $j=-1$, $n=1$, $a=0$, and $b=-1$.
In that case it is an extension from a preprojective
 module to a preinjective module, and is again zero.
Thus the vanishing argument of \lemref{lem:vanish}
 applies to every affine type.
The covering formula therefore proves full faithfulness
 of $P|_\calB$ exactly as in \lemref{lem:full}.

The remaining component argument also applies unchanged.
The transjective components {in $\Db(\mod H)$} are $\calB[t]$ {for all $t\in\bbZ$,}
 and $G$ sends $\calB[t]$ to $\calB[t+2n]$.
The local support-finiteness and component calculation
 used in the proof of \lemref{lem:full} hold for every
 affine hereditary algebra.
Hence the stable Auslander--Reiten quiver of $E$
 has exactly $2n$ transjective components, and all
 other components are $\ta$-periodic.
The functor $\Om_E$ permutes the transjective
 components transitively.

Finally, the proof of \lemref{lem:return} uses
 repetitive duality and its compatibility with
 $\widehat\ph$ and $\nu_H$, not the underlying graph
 of $Q$. It gives natural isomorphisms
\[
 N_EP\cong P\ta_H[2],
 \qquad
 N_E^nP|_\calB\cong P\Ph^-|_\calB.
\]
Since $P$ is a triangle functor,
 $\Om_EP\cong P[-1]$.
Using the first of the above natural isomorphisms
and $\ta_E\cong\Om_E^2N_E$, we obtain
$$
\ta_EP\cong\Om_E^2 N_EP\cong\Om_E^2P\ta_H[2]\cong P[-2]\ta_H[2]\cong P\ta_H.
$$
\end{proof}

Recall that a {\emph{Serre functor}} on a $\k$-linear category $\calD$ {with
finite-dimensional morphism spaces} is an autoequivalence
 $S$ equipped with bifunctorial isomorphisms
\[
{ D\calD(X,Y)\cong\calD(Y,SX)\qquad\text{for all }X,Y\in\calD.}
\]
This definition uses only the $\k$-linear category,
 not a triangle structure.
Consequently, if $V:\calD\to\calD'$ is a $\k$-linear
 equivalence and $S,S'$ are Serre functors {on $\calD,\calD'$, respectively,} then
\begin{equation}\label{eq:additive-Serre}
 VS\cong S'V
\end{equation}
naturally: transporting $S$ along $V$ gives a Serre
 functor on $\calD'$, and Serre functors are unique
 up to natural isomorphism.

For a finite-dimensional selfinjective algebra $E$,
 the stable module category has Serre functor
\[
 S_E\cong\Om_EN_E.
\]
Indeed, Auslander--Reiten duality and the triangle
 structure of $\umod E$ identify, {for all $X,Y\in\umod E$,}
 $D\uHom_E(X,Y)$ with
 $\uHom_E(Y,\Om_E^{-1}\ta_EX)$, and
 $\Om_E^{-1}\ta_E\cong\Om_EN_E$.
The existence of this triangle structure is used here
 to identify the Serre functor; an equivalence between
 stable categories need not preserve that structure
 in order to satisfy \eqref{eq:additive-Serre}.

\begin{lem}\label{lem:Serre-return}
Under the hypotheses of \lemref{lem:affine-full-return},
 there is a natural isomorphism
\begin{equation}\label{eq:Serre-return}
 S_E^{2n}P|_\calB
 \cong P\ta_H^n\Ph^-|_\calB.
\end{equation}
In particular, $S_E^{2n}$ preserves the full component
 $\calC_\ph$ up to isomorphism.
\end{lem}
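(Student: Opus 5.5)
We expand $S_E^{2n}\cong(\Om_EN_E)^{2n}$ and move every factor past $P$ using the natural isomorphisms of \lemref{lem:affine-full-return}. First we rewrite the power. Since $N_E$ is induced by an exact autoequivalence of $\mod E$ that preserves projectives, it commutes with $\Om_E$ up to natural isomorphism. Hence
\[
 S_E^{2n}\cong\Om_E^{2n}N_E^{2n}.
\]
Next we record the available isomorphisms: $\Om_EP\cong P[-1]$, since $P$ is a triangle functor; $N_EP\cong P\ta_H[2]$; and $N_E^nP|_\calB\cong P\Ph^-|_\calB$.

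The main computation splits $N_E^{2n}=N_E^nN_E^n$. Apply the return isomorphism to the inner $N_E^n$ on $\calB$, and the global isomorphism $N_E^nP\cong P\ta_H^n[2n]$, obtained by iterating $N_EP\cong P\ta_H[2]$, to the outer factor. This gives
\[
 N_E^{2n}P|_\calB\cong N_E^nP\Ph^-|_\calB\cong P\ta_H^n[2n]\Ph^-|_\calB .
\]
The outer step is legitimate because the second isomorphism is natural on all of $\Db(\mod H)$, not just on $\calB$. Then $\Om_E^{2n}P\cong P[-2n]$ cancels the shift $[2n]$, since shifts commute with the triangle functors $\ta_H$ and $\Ph^-$. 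This yields
\[
 S_E^{2n}P|_\calB\cong P\ta_H^n\Ph^-|_\calB.
\]
Alternatively, one can use $N_E^{2n}P\cong P\ta_H^{2n}[4n]$ together with the identity $P\ta_H^n[2n]\Ph\cong PG\cong P$ from the proof of \lemref{lem:return}; this gives the same result.

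Finally, for the ``in particular'': $\ta_H$ and $\Ph^-$ preserve the transjective component containing the projectives up to isomorphism, so $\ta_H^n\Ph^-$ maps $\calB$ into objects isomorphic to objects of $\calB$. Since $P|_\calB$ has essential image $\calC_\ph$ by \lemref{lem:full}, it follows that $S_E^{2n}$ preserves $\calC_\ph$ up to isomorphism.

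The main obstacle is bookkeeping rather than substance. One must check that $N_E$ and $\Om_E$ commute naturally, and that each isomorphism used is applied on the correct domain. The return isomorphism holds only on $\calB$, so it has to be applied innermost, with the globally valid isomorphisms applied outside it. If the commutation of $N_E$ with $\Om_E$ needs justification, we argue it at the level of $\mod E$: $N_E=-\otimes_EDE$ is exact and sends projective covers to projective covers.
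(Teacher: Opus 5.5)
Your proof is correct and is essentially the paper's argument. The paper writes $S_E^{2n}\cong\ta_E^nN_E^n$, which is your $\Om_E^{2n}N_E^{2n}$ with $\Om_E^{2n}N_E^n$ grouped as $\ta_E^n$ via $\ta_E\cong\Om_E^2N_E$; it applies the return isomorphism innermost and then uses $\ta_EP\cong P\ta_H$, which is exactly your combination of $N_EP\cong P\ta_H[2]$ and $\Om_EP\cong P[-1]$, and it handles the ``in particular'' the same way.
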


\begin{proof}
The Nakayama functor commutes naturally with syzygy.
Since $S_E\cong\Om_EN_E$ and
 $\ta_E\cong\Om_E^2N_E$, we obtain
\[
 S_E^{2n}\cong\ta_E^nN_E^n.
\]
Using \lemref{lem:affine-full-return}, we therefore have
\[
 \begin{aligned}
 S_E^{2n}P|_\calB
 &\cong\ta_E^nN_E^nP|_\calB\\
 &\cong\ta_E^nP\Ph^-|_\calB\\
 &\cong P\ta_H^n\Ph^-|_\calB.
 \end{aligned}
\]
Both $\ta_H$ and $\Ph^-$ preserve $\calB$
 up to isomorphism, proving the last assertion.
\end{proof}

\subsection{Recovery of the type}

\begin{prp}\label{prop:additive-stable-type}
Let $\La$ and $\La'$ be as in \thmref{thm:main}.
If their stable module categories are equivalent as
 $\k$-linear categories, then
 $\typ(\La)=\typ(\La')$.
No compatibility of the given equivalence with
 the suspension functors is assumed.
\end{prp}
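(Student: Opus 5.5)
We mimic the proof of (\ref{item:hereditary-stable})$\Rightarrow$(\ref{item:hereditary-type}) in \thmref{thm:hereditary}, replacing $N_E^n$ by the Serre power $S_E^{2n}$ of \lemref{lem:Serre-return}.

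\emph{Reduction to hereditary models.} By \prpref{prop:reduction}, and by the hereditary reduction of Section~\ref{sec:tree} for the tree types, $\La\dereq E=T_{\ph_0}^n(H)$ and $\La'\dereq E'=T_{\ps_0}^{n'}(H')$ with hereditary models. Derived equivalences give triangle, hence $\k$-linear, stable equivalences by \cite{Ric89}. Composing them with the given $\k$-linear equivalence yields a $\k$-linear equivalence $V:\umod E\to\umod E'$.

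\emph{Recovering $\De$ and $n$.} A $\k$-linear equivalence preserves the indecomposable objects and the irreducible morphisms, since these are defined through radicals of the categories. It therefore preserves the arrows of the stable Auslander--Reiten quiver with their multiplicities. By \eqref{eq:additive-Serre}, $V$ intertwines the Serre functors. The translation $\ta$ is not obviously preserved, so we avoid it. The transjective components are the nonperiodic ones under $S_E^2\cong\ta_EN_E$. On transjective components $N_E$ merely permutes components, so periodicity can be detected through $S^{2n}$ and \lemref{lem:Serre-return}; tubes stay periodic. I expect to argue instead that the underlying quiver of a component, with its arrow multiplicities, already determines whether it is $\bbZ Q$ or a tube, and that $\bbZ Q$ as an abstract quiver determines $\De$. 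This holds because $\ta$ is recovered combinatorially on $\bbZ Q$ for non-Dynkin $Q$: its meshes are determined by the quiver, since $\ta x$ is the unique vertex with arrows to all direct successors-of-predecessors. Counting the transjective components gives $2n$ by \lemref{lem:affine-full-return}. Hence $\De$ and $n$ agree, and $H=H'=H_\De$.

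\emph{Recovering the twist.} $V$ sends transjective components to transjective components. We fix $\calC_{\ph_0}$ and let $V$ carry it onto some transjective component $\calC'$ of $E'$. There is no triangle structure available to rotate $\calC'$ back, so we use the component identification instead: every transjective component of $E'$ is $P'(\calB[t])$, and $P'(\blank[t])|_\calB$ is still a full embedding. Set $U=(P'[t]|_\calB)^-\,V\,P|_\calB:\calB\to\calB$. Using $VS_E^{2n}\cong S_{E'}^{2n}V$ and \eqref{eq:Serre-return} for both algebras, together with the fact that $S_{E'}^{2n}$ commutes with the component shift up to $P'$, we obtain
\[U\,\ta_H^n\Ph_0^-|_\calB\cong\ta_H^n\Ps_0^-\,U.\]
By \prpref{prop:MY}, $U$ lifts to a standard autoequivalence $\widetilde U$ preserving the chosen component, and injectivity of the isomorphism in \eqref{eq:mesh} yields $\widetilde U\ta^n\Ph_0^-\cong\ta^n\Ps_0^-\widetilde U$ in $\DPic_\k(H)$.

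\emph{Main obstacle.} We must cancel $\ta^n$. Under \eqref{eq:mesh}, $\ta$ corresponds to the element $(\ta|_\calB,0)$. I would argue that $\ta$ is central in $\DPic_\k(H)$, since every standard autoequivalence commutes with the Serre functor $\ta[1]$ and $[1]$ is central. Then $\ta^n$ cancels, giving conjugacy of $\Ph_0$ and $\Ps_0$. By \prpref{prop:reduction} (respectively Section~\ref{sec:tree}), this is equality of the third components. The other delicate point is the Auslander--Reiten combinatorics without $\ta$ in the second step; there I would fall back on comparing nonperiodicity under $S^2$, which $V$ visibly preserves.
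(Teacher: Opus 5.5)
Your treatment of the twist is essentially the paper's. The paper composes $V$ with a power of $\Om_{E'}$; this is legitimate because $\Om_{E'}$ is itself a $\k$-linear autoequivalence, so no compatibility with $V$ is needed. You use $P'\circ[t]$ instead, which works because $S_{E'}^{2n}P'\cong P'\ta_H^n\Ps_0^-$ holds on all of $\Db(\mod H)$. Your Serre-functor argument for the centrality of $\ta_H$ replaces the citation of \cite[Lemma~3.1]{MY}.

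The genuine gap is the step recovering $\De$ and $n$, where neither of your mechanisms works.

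First, $S_E$-periodicity does not separate tubes from transjective components. On all of $\Db(\mod H)$ one has $S_EP\cong P\ta_H[1]$, so $S_E^kP(M)\cong P(\ta_H^kM[k])$. For $M$ in a homogeneous tube, this is isomorphic to $P(M)$ only if $k=2nj$ and $\Ph_0^jM\cong M$. Take a scalar twist with $\la$ not a root of unity, for instance the algebras $\La_\la$ of \corref{cor:HH-family} over $\bbC$. Then $\Ph_0$ moves the homogeneous tubes in infinite orbits, so these objects are not $S_E$-periodic. Tubes do not ``stay periodic'', and your fallback fails.

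Second, the arrows alone do not determine $\ta$. In $\bbZ Q$ of type $\widetilde D_r$, take two leaves $v,v'$ oriented towards the same fork. Then $(i,v)$ and $(i,v')$ have the same direct predecessors and successors. Transposing them at a single level $i$ is therefore a quiver automorphism of $\bbZ Q$ that does not commute with $\ta$. The two sources of the alternating $\Aaff22$ model behave in the same way. Hence $\ta x$ is not the unique vertex with arrows to all direct predecessors of $x$, and your claim that the abstract quiver determines $\De$ is unsupported.

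The missing ingredient is \cite[Corollary~X.1.9]{ARS}. A $\k$-linear equivalence between stable categories of selfinjective algebras without blocks of Loewy length two satisfies $V(\ta_EX)\cong\ta_{E'}V(X)$ on objects. It therefore induces an isomorphism of stable translation quivers. The hypothesis holds for $E,E'$ because they are connected and representation-infinite. The paper uses exactly this to count the $2n$ nonperiodic components and to apply \prpref{prop:recover}. It never needs a natural isomorphism $V\ta_E\cong\ta_{E'}V$, so your decision to avoid $\ta$ was unnecessary.

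If you want to avoid that citation, your remark that tubes contain oriented cycles while $\bbZ Q$ is acyclic does identify the transjective components correctly, and it gives $n=n'$. To recover $\De$, however, you would still have to reconstruct $\ta$ on vertices from compositions in the stable category, that is, from the mesh relations, not from the quiver alone.
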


\begin{proof}
\setcounter{proofstep}{0}
\noindent\textbf{\proofstep{step:additive-reduction} Reduction to hereditary bases.}
Use \prpref{prop:reduction} in type $\widetilde A$
 and the hereditary reduction recalled in
 Section~\ref{sec:tree} for the tree types.
These reductions are derived equivalences and hence
 induce triangle equivalences of stable categories.
Composing with them preserves the hypothesis of a
 $\k$-linear equivalence of stable categories.
It therefore suffices to treat hereditary extensions
\[
 E=T_\ph^n(H),\qquad E'=T_\ps^{n'}(H').
\]
{Here $H,H'$ are connected hereditary algebras of affine type, $\ph\in\Aut(H)$, $\ps\in\Aut(H')$, and
$n,n'$ are positive integers.}
Let $V:\umod E\to\umod E'$ be a $\k$-linear equivalence.

\medskip
\noindent\textbf{\proofstep{step:additive-type} Recovery of the hereditary type and $n$.}
An equivalence of stable module categories of selfinjective
 algebras with no block of Loewy length two induces an
 isomorphism of their stable Auslander--Reiten translation
 quivers; see \cite[Corollary~X.1.9]{ARS}.
The algebras $E$ and $E'$ satisfy this hypothesis.
Indeed, they are connected and representation-infinite,
 whereas a connected selfinjective algebra of Loewy
 length two is a representation-finite Nakayama algebra.

Thus $V$ preserves the translation quiver, including
 arrow multiplicities and the translation on vertices.
By \lemref{lem:affine-full-return}, the number of
 nonperiodic components is $2n$ for $E$ and $2n'$ for $E'$.
Hence $n=n'$.
The reconstruction from one such translation quiver
 in the proof of \prpref{prop:recover} also applies,
 and gives the same hereditary type $\De$.
We may therefore use the common chosen hereditary
 model $H=H_\De$ for both extensions.

This use of Auslander--Reiten translation concerns
 vertices of the translation quiver only.
We do not infer a natural isomorphism
 $V\ta_E\cong\ta_{E'}V$ from it.

\medskip
\noindent\textbf{\proofstep{step:additive-component} Comparison on a full component.}
Write
\[
 P_\ph:\calB\longrightarrow\calC_\ph,
 \qquad
 P_\ps:\calB\longrightarrow\calC_\ps
\]
for the full-component equivalences supplied by
 \lemref{lem:affine-full-return}.
Since $\Om_{E'}$ acts transitively on the transjective
 components, we may compose $V$ with a suitable power
 of $\Om_{E'}$ so that it sends $\calC_\ph$ to
 $\calC_\ps$ up to isomorphism.
This does not require $V$ to commute with syzygy.

Choose a quasi-inverse of $P_\ps$ and obtain a
 $\k$-linear equivalence $U:\calB\to\calB$ with
\[
 P_\ps U\cong VP_\ph.
\]
The equivalence $V$ preserves the Serre functors by
 \eqref{eq:additive-Serre}.
Applying \eqref{eq:Serre-return} to both sides gives
\[
 \begin{aligned}
 P_\ps U\ta_H^n\Ph^-
 &\cong VP_\ph\ta_H^n\Ph^-\\
 &\cong VS_E^{2n}P_\ph\\
 &\cong S_{E'}^{2n}VP_\ph\\
 &\cong S_{E'}^{2n}P_\ps U\\
 &\cong P_\ps\ta_H^n\Ps^-U.
 \end{aligned}
\]
Here all functors are restricted to $\calB$ where
 appropriate.
Since $P_\ps$ is fully faithful, we obtain a natural
 isomorphism
\begin{equation}\label{eq:additive-component-comparison}
 U\ta_H^n\Ph^-|_\calB
 \cong\ta_H^n\Ps^-U.
\end{equation}

\medskip
\noindent\textbf{\proofstep{step:additive-DPic} Passage to the derived Picard group.}
At this point $U$ is only an equivalence of the full
 category $\calB$.
We use \cite[Theorem~3.7]{MY}, in the form stated in
 \prpref{prop:MY}, to obtain a standard derived
 autoequivalence $\widetilde U$ of $H$ which preserves
 the chosen transjective component and whose restriction
 to $\calB$ is naturally isomorphic to $U$.

The same theorem says that restriction is injective
 on the subgroup of $\DPic_\k(H)$ preserving this
 component.
Indeed, this subgroup corresponds to the first factor
 in \eqref{eq:mesh}; the second factor records shifts
 to other transjective components.
Both sides of \eqref{eq:additive-component-comparison}
 are restrictions of standard derived autoequivalences
 preserving the chosen component.
Consequently,
\[
 \widetilde U\ta_H^n\Ph^-
 \cong\ta_H^n\Ps^-\widetilde U
\]
as derived functors.

The class of $\ta_H$ is central in $\DPic_\k(H)$
 by \cite[Lemma~3.1]{MY}.
We can therefore cancel $\ta_H^n$ and obtain
\[
 \widetilde U\Ph^-\cong\Ps^-\widetilde U,
 \qquad\text{equivalently}\qquad
 \widetilde U\Ph\widetilde U^-\cong\Ps.
\]
Thus the twists are conjugate in $\DPic_\k(H)$.
Together with {(\ref{step:additive-type})}, this proves equality of all three
 components of the type.

For generalized multifold extensions with nonzero
 integer jump, first apply
 \thmref{thm:normalization} as in the main text.
The preceding argument then applies to the resulting
 twisted multifold extensions.
\end{proof}

\begin{rmk}\label{rmk:additive-stable-classification}
Under the hypotheses of \thmref{thm:main}, consider
 the additional condition
\[
 \textup{{(\ref{item:main-stable}')}}\qquad
 \umod\La\simeq\umod\La'
 \quad\text{as $\k$-linear categories}.
\]
By the main theorem and \prpref{prop:additive-stable-type},
 we have
\[
 \textup{{(\ref{item:main-derived})}}\ \Longrightarrow\ \textup{{(\ref{item:main-stable})}}
 \ \Longrightarrow\ \textup{{(\ref{item:main-stable}')}}
 \ \Longrightarrow\ \textup{{(\ref{item:main-type})}}
 \ \Longrightarrow\ \textup{{(\ref{item:main-derived})}}.
\]
Hence these four conditions are equivalent.
In particular, the classification remains valid when
 condition {(\ref{item:main-stable})} is replaced by {(\ref{item:main-stable}')}, without assuming
 that the stable-category equivalence is a triangle functor.
The same observation applies to the family in
 {\corref{cor:HH-family}.}

\end{rmk}

\section*{Acknowledgments}
The author has been working on the questions considered here since 2015.
This work was partially supported by JSPS Grant-in-Aid for
Challenging Exploratory Research 25610003, Scientific Research (B)
25287001, Scientific Research (C) 18K03207 and 25K06922, and by
Osaka Central Advanced Mathematical Institute (MEXT Promotion of
Distinctive Joint Research Center Program JPMXP0723833165).

\section*{Declaration of AI assistance}
The author used OpenAI's ChatGPT to assist with mathematical
discussions, the formulation and examination of proof arguments,
literature searches, and the preparation and revision of the text
and LaTeX source.
The author takes full responsibility for the mathematical claims,
proofs, references, and all other content of this paper.

\end{document}